\numberwithin{equation}{section}
\newtheorem{theorem}{Theorem}[section]
\newtheorem{lemma}[theorem]{Lemma}
\newtheorem{corollary}[theorem]{Corollary}
\newtheorem{example}[theorem]{Example}
\newtheorem{definition}[theorem]{Definition}
\newtheorem{proposition}[theorem]{Proposition}
\newtheorem{remark}[theorem]{Remark}
\let\cal\mathcal
\def\A{\mathcal A}  
\def\B{\mathcal B}
\def\K{\mathcal K}
\def\M{\mathcal M}
\def\N{\mathcal N}
\def\D{\mathcal D}
\def\U{\mathcal U}
\def\V{\mathcal V}
\def\W{\mathcal W}
\def\H{\mathcal H}  
\def\B{\mathcal B}  
\def\NN{\mathbf N} 
\def\CC{\mathbf C} 
\def\RR{\mathbf R} 
\newcommand\beq{\begin{equation}}
\newcommand\eeq{\end{equation}}
\newcommand\id{\rm id}
\numberwithin{equation}{section}
\begin{document}

\title{Projectivity of Banach and $C^*$-algebras of  continuous fields}

\author{David Cushing and Zinaida A. Lykova}

\date{26 April 2011}

\begin{abstract} We give necessary  and sufficient conditions for the left projectivity and biprojectivity of Banach algebras defined by locally trivial continuous fields of Banach algebras. We identify projective  $C^*$-algebras  $\A$ defined by locally trivial continuous fields $\mathcal{U} = \{\Omega,(A_t)_{t \in \Omega},\Theta\}$ such that each $C^*$-algebra $ A_{t}$ has a strictly positive element. For  a commutative
 $C^*$-algebra $\D$ contained in ${\cal B}(H)$, where $H$ is a separable 
Hilbert space, we show that the condition of left projectivity of $\D$ is  equivalent to the existence of a strictly positive element in $\D$ and so to the spectrum of $\D$ being a Lindel$\ddot{\rm o}$f  space.
\end{abstract}

\thanks {David Cushing is partially supported by an EPSRC DTA grant.}

\keywords{ $C^*$-algebras; projectivity; continuous fields; continuous Hochschild cohomology; Suslin condition.}

\maketitle

\markboth{David Cushing and Zinaida A. Lykova}
{Projectivity of  Banach and $C^*$-algebras of  continuous fields}

\section{Introduction}  

The study  of projective, injective and flat 
modules over Banach algebras and operator algebras  has  attracted  quite a number mathematicians. Some 
recent papers on the projectivity and  injectivity  of Banach modules  and on their applications are \cite{Ari00, BDL, DDPR, DP, MJo, Wh, Se00, Ra}. 
The identification of projective
algebras and projective closed ideals of Banach algebras,
 besides being of independent interest, is closely connected to  continuous Hochschild cohomology. 
 One of the main methods for computing  cohomology groups
is to construct  projective or injective resolutions of 
the corresponding module and the algebra. In this paper we consider the question of the left projectivity and biprojectivity of some Banach algebras $\A$ and we give applications to the second continuous Hochschild  cohomology group 
$\H^2(\A, X)$ of  $\A$ and to the strong splittability of singular extensions of $\A$.

This paper concerns those $C^*$-algebras of  continuous fields that are 
left projective. The basic examples of  $C^*$-algebras defined by continuous fields are the following: the $C^*$-algebra $C_0(\Omega,\A)$ of the constant field over  a Hausdorff locally compact space $\Omega$ defined by  a $C^*$-algebra $\A$, and the direct sum of the $C^*$-algebras $(A_{\lambda})_{\lambda \in \Lambda}$, where  $\Lambda$ has  the discrete topology.
Continuous fields of $C^*$-algebras were found useful in the characterisation of exactness and nuclearity of $C^*$-algebras \cite{KirWas}.
As to the left projectivity of $C^*$-algebras, the following results are known. All $C^*$-algebras with strictly positive elements are left and right projective, and so all separable $C^*$-algebras are hereditarily projective, see \cite[Theorem 2.5]{PhR} and \cite[Theorem 1]{Ly2}.  No infinite-dimensional $AW^*$-algebra is  hereditarily projective \cite{Ly2,Ly02}. Thus any infinite-dimensional von Neumann algebra is not hereditarily projective. Recall that a Banach algebra $\A$  is {\it hereditarily projective} if every closed left ideal of $\A$ is projective in the category of left Banach $\A$-modules. 
A complete description of hereditarily projective  commutative $C^*$-algebras $C(\Omega)$ as algebras having a hereditarily paracompact spectrum is given in \cite{He3}. It is quite difficult to get a  complete description of left projective noncommutative $C^*$-algebras because of the richness of   $C^*$-algebras. On the other hand, very broad classes of $C^*$-algebras can be obtained as  $C^*$-algebras defined by continuous fields of very simple $C^*$-algebras. For example, by \cite[Theorems 10.5.2 and 10.5.4]{Di}, there exists a canonical bijective correspondence between the liminal  $C^*$-algebras with Hausdorff spectrum $\Omega$ and the continuous fields of non-zero elementary $C^*$-algebras over  $\Omega$.

In this paper we identify left 
projective  $C^*$-algebras  $\A$ defined by locally trivial continuous fields $\mathcal{U} = \{\Omega,(A_t)_{t \in \Omega},\Theta\}$
 where the topological dimension $\dim \Omega \le \ell$ and each $ A_{t}$ has a strictly positive element (Theorem \ref{Ax-strictly-positive-h-H2}). 
We prove also that, for a commutative $C^*$-algebra $\D = C_0(\Omega)$ for which $\Omega$ has countable Suslin number,   the condition of left projectivity is  equivalent to the existence of a strictly positive element (Theorem \ref{count_Suslin_number}), but not to the separability of $\D$  (Example \ref{nonsepA}). 

In Section \ref{definitions} we recall the classical definition of a continuous  field of Banach and $C^*$-algebras \cite[Chapter 10]{Di}. We also recall notation and terminology used in topology and in the homological theory of Banach algebras. 
In Section \ref{nec_cond_proj} we prove  that if 
$\mathcal{U} = \{\Omega,(A_t)_{t \in \Omega},\Theta\}$ is a locally trivial continuous field of Banach algebras such that the Banach algebra $\A$ defined by $\mathcal{U}$ is projective in $\A$-$\mathrm{mod}$ ($\mathrm{mod}$-$\A$, $\A$-$\mathrm{mod}$-$\A$), then the  Banach algebras $A_{x}$, $x \in \Omega$, are uniformly left (right, bi-)projective.
In Section \ref{A-proj-Omega-paracompact} we consider  the situation that
 $\Omega$ is a disjoint union of a family of open subsets $\{ W_\mu \}$, $\mu \in \M$, of $\Omega$. In this case we
 say that {\em $\mathcal{U} = \{\Omega,(A_t)_{t \in \Omega},\Theta\}$ is a {\it disjoint union} of $\mathcal{U}|_{W_{\mu}}$}, $\mu \in \M$.
We show that if $\A$ is defined by such a $\mathcal{U}$, then 
the left projectivity of $\A$ implies that $\Omega$ is paracompact. In Section \ref{biprojective}
we prove that biprojectivity of the Banach algebra $\A$ defined by  a locally trivial continuous field of Banach algebras implies that $\Omega$ is discrete. We give a description of contractible Banach algebras defined by  locally trivial continuous fields of Banach algebras.
In Section \ref{left-proj-C*-posit-elem} we give six different criteria for  a commutative
 $C^*$-algebra  contained in ${\cal B}(H)$, where $H$ is a separable 
Hilbert space, to be left projective (Corollory \ref{Hseparab_AinB(H)}).
In the noncommutative case we also show that, for 
a Hausdorff locally compact space $\Omega$ with $\dim\Omega \le \ell$,  for some $\ell \in \NN$, and for a locally  trivial continuous field $\mathcal{U} = \{\Omega,(A_t)_{t \in \Omega},\Theta\}$ of $C^*$-algebras $A_t$, $t \in \Omega$, with strictly positive elements, the following conditions are equivalent: {\rm (i)} $\Omega$ is paracompact and  {\rm (ii)} $\A$ defined by $\mathcal{U}$ is left projective and $\mathcal{U}$ is a disjoint union of $\sigma$-locally  trivial continuous fields of $C^*$-algebras (Theorem \ref{Ax-strictly-positive-h-H2}).


\section{Definitions and notation}\label{definitions}

  We recall some notation and terminology used in topology \cite{En} and  in  the homological theory of Banach algebras \cite{He4}.

 For any  Banach algebra $\A$, not necessarily unital,
$\A_+$ is the Banach algebra obtained by adjoining an identity $e$ to $A$.
We denote the projective tensor product of Banach spaces 
by $ \widehat{\otimes} $.  The category of  left  Banach  $\A$-modules is denoted by $\A$-$\mathrm{mod}$, the category of  right  Banach  $\A$-modules is denoted by $\mathrm{mod}$-$\A$ and the category of Banach  $\A$-bimodules is denoted by  $\A$-$\mathrm{mod}$-$\A$. 

A Banach $\A$-bimodule $X$ is  {\it right annihilator} if $X \cdot \A =
\{x \cdot a: a \in \A, \; x \in X \} = \{0 \}$.
For a Banach space $E$, we will denote by  $E^*$ 
 the dual space of $E$. For a Banach $\A$-bimodule $X$, $X^*$ is the Banach $\A$-bimodule dual to $X$ with the module multiplications given by
\[ (a \cdot f)(x) = f(x \cdot a), \; (f \cdot a)(x) = f(a \cdot x)\;\;(a \in \A, \; f \in X^*, \; x \in X ). 
\]

Let ${\cal K}$ be one of the above categories of Banach 
modules and morphisms. If $X, Y$ are objects of ${\cal K}$, 
the Banach space of morphisms from $X$ to $Y$ is denoted by 
$h_{\cal K} (X, Y)$.
 A module $P$ in ${\cal K}$ is called 
{\it projective in ${\cal K}$} if, for
each module $Y$ in ${\cal K}$ and each  epimorphism of
modules $\varphi \in h_{\cal K}(Y, P)$ such that $\varphi$ has a right
inverse as a morphism of Banach spaces, there exists a 
morphism $\psi \in h_{\cal K}(P, Y)$ which is   a right
inverse of $\varphi$.

 In  the category of  left  Banach  $\A$-modules, let
$X$ be a left  Banach  $\A$-module and let us consider
 the canonical morphism 
$\pi_{X}:  \A_+ \widehat{\otimes} X \rightarrow X: a \otimes x \mapsto a \cdot x.$ Then  $X$ is  projective
if and only if there is a morphism of left  Banach $\A$-modules 
 $\rho:  X \rightarrow \A_+ \widehat{\otimes} X$ 
such that $\pi_{X} \circ \rho = {\rm id}_{X} $.
 Throughout the paper  ${\rm id}$  denotes the identity operator.
 We say that
a Banach algebra $\A$ is {\it left (right) projective} if it is  projective in  the category of  left (right) Banach  $\A$-modules.
 Every unital Banach algebra is left and right projective. We say that
a Banach algebra $\A$ is {\it biprojective} if it is a projective
 Banach $\A$-bimodule.

We say that the Banach algebras $A_{x}$, $x \in \Omega$, are {\it uniformly left projective} if, for every $x \in \Omega$,
 there is a morphism of left Banach $A_{x}$-modules
\[ \rho_{A_{x}}: A_{x} \to (A_{x})_+ \widehat{\otimes} A_{x}
\]
such that $\pi_{A_{x}} \circ \rho_{A_{x}} = {\rm id}_{A_{x}} $ and 
$\sup_{x \in \Omega} \| \rho_{A_{x}} \| < \infty$.

 For  a Banach algebra  $\A$ and   a Banach $\A$-bimodule $X$,  
we define an {\it  $n$-cochain} to be a bounded $n$-linear 
operator from $\A \times \dots \times \A$ into $X$ and we denote 
 the space of $n$-cochains by $ C^n(\A,X)$. For $n=0$ the space  
$ C^0(\A,X)$ is
defined to be $X$. Let us consider the {\it standard cohomological complex}

\vspace*{0.2cm}
\hspace{0.2cm}
$ 0 \longrightarrow C^0(\A,X) \stackrel { \delta^0} { \longrightarrow} 
\dots \longrightarrow C^n(\A,X) \stackrel { \delta^n} { \longrightarrow}  
 C^{n+1}(\A,X) 
\longrightarrow \dots,
\hfill {({\cal C}^{\sim}(\A,X))} $

\vspace*{0.2cm}
\noindent where the coboundary operator $ \delta^n $ is defined by

\vspace*{0.2cm}
\begin{center}
$(\delta^n f)(a_1,...,a_{n+1}) = a_1 \cdot f(a_2,...,a_{n+1}) +$
\end{center}
\begin{center}
$ \sum_{i=1}^n (-1)^i f(a_1,...,a_i a_{i+1},...,a_{n+1}) +
(-1)^{n+1} f(a_1,...,a_n) \cdot a_{n+1}.$
\end{center}

\vspace*{0.2cm}
\noindent 
The $n$th cohomology group of the standard cohomology complex ${\cal C}^{\sim}(\A,X)$, denoted by ${\H}^n(\A,X)$, is called the {\it  $n$th continuous Hochschild
cohomology group of $\A$ with coefficients in $X$} (see, for example, 
\cite{Jo1} or \cite[Definition I.3.2]{He4}). It is a complete seminormed space.
Definitions and results on the strong and algebraic splittings of extensions of Banach algebras can be found in \cite{BDL}.

 For a Banach algebra $\A$ and for a subset $Y$ of a left Banach $\A$-module $X$, $\A Y$ is 
the linear span of the set $\A \cdot Y =\{a \cdot y: a \in \A, y \in Y\}$ and  $\overline{\A Y}$ is 
the closure of $\A Y$. For a Banach algebra $\A$ with bounded approximate identity, by an extension of the Cohen factorization 
theorem,  for every left Banach  $\A$-module
$X$, $\overline{\A X} = \A X = \A \cdot X =\{a \cdot x: a \in \A, x \in X\}$ 
\cite{Hew, CF-T}.

  Let  $\B$ be a commutative $C^*$-algebra
 and let $I$ be a closed ideal of $\B$. In view of \cite[1.4.1]{Di},
 $\B$ is isomorphic to the $C^*$-algebra $C_0(\widehat{\B})$
of  continuous functions on $\widehat{\B}$ vanishing at infinity. Here 
$\widehat{\B}$ is the space of  characters of $\B$ with the relative 
weak-* topology from the dual space of $\B$, 
called the 
{\it spectrum } of $\B$. We recall \cite{Ri} that the spectrum of a 
closed ideal $I$
of $\B$ is homeomorphic to the open set 
$\widehat{I} = \{t \in \widehat{\B}: x(t)\neq  0 $ for some $ x \in I\}$.

Let $\Omega$ be a  topological space, and let $n$ be a positive integer. We recall that the {\it order} (or {\it multiplicity}) of a cover $\W$ of  $\Omega$ is the maximum number $n$ such that every $x \in \Omega$ is covered by no more than $n$ elements of $\W$.
For a normal topological space $\Omega$, we
 say that the {\it topological dimension} of $\Omega$ is less than or equal to $n$ if the following condition is satisfied:
 every locally finite open cover of $\Omega$
possesses an open locally finite refinement  of order $n$ (Dowker's Theorem \cite[Theorem 7.2.4]{En}). We write $\dim \Omega \le n$.

The $C^*$-algebra of all complex $n \times n$ matrices is
denoted by $M_n({\CC})$. For a Hilbert space $H$, $\B(H)$ and $\K(H)$ will denote the  $C^*$-algebras of all continuous and all compact 
linear operators on $H$ respectively.

\subsection{Continuous fields  of Banach and $C^*$-algebras}

We shall use the classical definition of a continuous field of 
Banach or $C^*$-algebras over a locally compact Hausdorff space $\Omega$ \cite[Chapter 10]{Di}. 

Let $\Omega$ be a topological space,  and let
$(E_t)_{t \in \Omega}$ be a family of Banach spaces. Every element of  $\prod_{t \in \Omega} E_t$, that is, every function $x$ defined on $\Omega$ such that $x(t) \in E_t$ for each $t \in \Omega$, is called a {\it vector field over} $\Omega$.

\begin{definition}\label{cont-fields}{\rm \cite[Definitions 10.1.2 and 10.3.1]{Di}} A {\em continuous  field}  $\mathcal{U}$ of Banach algebras {\rm ($C^*$}-algebras{\rm )} is a triple
$\mathcal{U} = \{\Omega,(A_t)_{t \in \Omega},\Theta\}$ where
 $\Omega$ is a locally compact Hausdorff space,  
$(A_t)_{t \in \Omega}$ is a family of Banach algebras {\rm ($C^*$}-algebras{\rm )} and  $\Theta $ is an (involutive) subalgebra of $\prod_{t \in \Omega}A_t$ such that
\begin{enumerate} 
\item[{\rm (i)}] for every $t \in \Omega$, the set of $x(t)$ for $x \in \Theta$ is dense in $A_t$; 
\item[{\rm (ii)}] for every $x \in \Theta$, the function $t \to \|x(t)\|$ is continuous on $\Omega$;
\item[{\rm (iii)}] whenever $ x \in \prod_{t \in \Omega}A_t$ and, for every $t \in \Omega$ and every $\varepsilon >0$, there is an $x' \in \Theta$ such that $\|x(t) -x'(t)\| \le \varepsilon $ throughout some neighbourhood of $t$, it follows that $ x \in \Theta$. 
\end{enumerate} 
The elements of  $\Theta$ are called the {\em continuous vector  fields} of $\mathcal{U}$.
\end{definition}

\begin{definition}\label{isom-cont-fields}{\rm \cite[Definitions 10.1.3 and 10.3.1]{Di}}
Let  $\Omega$ be a locally compact Hausdorff space, and let
\[
\mathcal{U} = \{\Omega,(A_t)_{t \in \Omega},\Theta\} \;\;\;\text{and} \;\;\;
\mathcal{U}' =\{\Omega,(A'_t)_{t \in \Omega},\Theta'\}
\] 
be two continuous  fields of Banach algebras {\rm ($C^*$}-algebras{\rm )} over $\Omega$.
 An isomorphism of $\mathcal{U}$ onto $\mathcal{U}'$ is
 a family $\phi =(\phi_t)_{t \in \Omega}$ such that each $\phi_t$ is an  isometric ($*$-)isomorphism of Banach algebras  
$A_t$ onto $A'_t$ and $\phi$ transforms  $\Theta$ into $\Theta'$.
\end{definition}

\begin{definition}\label{cont-fields-on-Y}{\rm \cite[10.1.6 and 10.1.7]{Di}} Let $\mathcal{U} = \{\Omega,(A_t)_{t \in \Omega},\Theta\}$ be a
 continuous  field  of Banach algebras over a  locally compact Hausdorff space $\Omega$. Let $Y \subseteq \Omega$ and $ t_0 \in Y$. A vector field $x$ over $Y$ is said to be {\em continuous at $t_0$}
if, for every $\varepsilon >0$, there is an $x' \in \Theta$ such that $\|x(t) -x'(t)\| \le \varepsilon $ throughout some neighbourhood of $t_0$.
A vector field $x$ is said to be {\em continuous on $Y$} if it is continuous at every point of $Y$.

Let $\Theta|_Y$ be the set of continuous vector  fields over $Y$. It is immediate that $ \{Y,(A_t)_{t \in Y},\Theta|_Y\}$ is a continuous  field  of Banach algebras over $Y$, which is called {\em the field induced by $\mathcal{U}$ on $Y$}, and which is denoted by
$\mathcal{U}|_Y$. 
\end{definition}


\begin{example}\label{trivial-cont-fields}{\rm \cite[Example 10.1.4]{Di}~
Let $A$ be a Banach algebra ($C^*$-algebra), let  $\Omega$ be a locally compact Hausdorff space, and let $\Theta$ be the (involutive) algebra of continuous mappings of $\Omega$ into 
$A$. For every $t \in \Omega$, put $A_t=A$. Then $\mathcal{U} =\{\Omega, (A_t)_{t \in \Omega},\Theta\}$ is a 
 continuous  field of Banach algebras ($C^*$-algebras) over $\Omega$, called the {\em constant field} over $\Omega$ defined by $A$. A field isomorphic to a constant field is said to be {\em trivial}. If every point of $\Omega$ possesses a neighbourhood $V$ such that $\mathcal{U}|_V$ is trivial, then $\mathcal{U}$ is said to be {\em locally trivial} (\cite[10.1.8]{Di}).
}
\end{example}

\begin{example}\label{discrete-cont-fields}{\rm \cite[Example 10.1.5]{Di}~
If $\Omega$ is discrete in Definition \ref{cont-fields}, then Axioms (i) and (iii) imply that $\Theta$ must be equal to $\prod_{t \in \Omega}A_t$.
}
\end{example}

\begin{definition}\label{cont-fields-C*-alg}
{\rm \cite[Definition 10.4.1]{Di}}~
Let $\mathcal{U}=\{\Omega,(A_t)_{t \in \Omega},\Theta\}$ be a
 continuous  field   of  Banach algebras ($C^*$-algebras) over  a locally compact Hausdorff space $\Omega$.
Let $\A$ be the set  of $x \in \Theta$ such that $\|x(t) \|$ vanishes at infinity on $\Omega$. Then $\A$ with $\|x\|= \sup_{t \in \Omega} \|x(t) \|$ is a  Banach algebra ($C^*$-algebra)
 which we call {\em the Banach algebra ($C^*$-algebra) defined by}
$\mathcal{U}$, or {\em the algebra of sections of} $\mathcal{U}$, or {\em the continuous bundle Banach algebra ($C^*$-algebra)}.
\end{definition}
In this paper we shall use the notation $\mathcal{U} = \{\Omega,A_t,\Theta\}$ instead of $\mathcal{U} = \{\Omega,(A_t)_{t \in \Omega},\Theta\}$. Let $\mathcal{U} = \{\Omega,A_t,\Theta\}$ be a
 continuous  field   of  Banach algebras ($C^*$-algebras) over $\Omega$, and let $\A$ be defined by $\mathcal{U}$.
Then, for every $t \in \Omega$, the map
$
\tau_{t}: \A \to A_t : a \mapsto a(t)
$
is a  Banach algebra homomorphism with dense image ($*$-epimorphism) and $\|\tau_{x}\| \le 1$. We will also consider
$
{\tau_{t}}_+ : \A_+ \to {(A_t)}_+ : a + \lambda e \mapsto a(t) + \lambda e_t
$
where $e$ and $e_t$ are the adjoined identities to $\A$ and $A_t$ respectively.

\section{Necessary conditions for left projectivity of Banach algebras of continuous fields} \label{nec_cond_proj}

The next statement is well-known.

\begin{lemma}\label{A^2neq0} Let  $\A$ be a  Banach algebra such that  $\A \neq \{0\}$ and is projective in $\A$-$\mathrm{mod}$ or in $\mathrm{mod}$-$\A$. Then $\A^2 \neq \{0\}$.
\end{lemma}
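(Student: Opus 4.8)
The statement to prove is that if a nonzero Banach algebra $\A$ is projective in $\A$-$\mathrm{mod}$ (or in $\mathrm{mod}$-$\A$), then $\A^2 \neq \{0\}$. I would argue by contraposition: assume $\A^2 = \{0\}$ and derive a contradiction with left projectivity (the right case being symmetric). The key observation is that when $\A^2 = \{0\}$, the module action collapses, so the canonical map $\pi_\A$ factors through a trivial multiplication on $\A$ itself.

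First I would recall the characterization of projectivity stated in the excerpt for left modules: $\A$ is left projective if and only if there is a morphism of left Banach $\A$-modules $\rho : \A \to \A_+ \widehat{\otimes} \A$ with $\pi_\A \circ \rho = \mathrm{id}_\A$, where $\pi_\A : \A_+ \widehat{\otimes} \A \to \A$ is given by $a \otimes b \mapsto a \cdot b$. Here $\A$ is viewed as a left $\A$-module via its own multiplication, and $\A_+ \widehat{\otimes} \A$ carries the left action $c \cdot (a \otimes b) = (ca) \otimes b$ on the first factor.

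The main step is to examine what the left-module morphism condition forces on $\rho$ once $\A^2 = \{0\}$. Since $\A$ acts on itself by multiplication, the assumption $\A^2 = \{0\}$ means $c \cdot x = cx = 0$ for all $c, x \in \A$; thus $\A$ as a left $\A$-module has \emph{zero} action. A morphism of left $\A$-modules $\rho$ must then satisfy $\rho(c \cdot x) = c \cdot \rho(x)$, i.e. $\rho(0) = c \cdot \rho(x)$, so $c \cdot \rho(x) = 0$ for all $c \in \A$, $x \in \A$. Writing an element of $\A_+ \widehat{\otimes} \A$ and applying the left action $c \cdot (a \otimes b) = (ca)\otimes b$, the vanishing of $c \cdot \rho(x)$ for all $c$ should, together with $\A^2 = \{0\}$, pin down the image of $\rho$: the component of $\rho(x)$ landing in $\A \widehat{\otimes} \A$ is killed under $\pi_\A$ because $\pi_\A(a \otimes b) = ab \in \A^2 = \{0\}$, while the component landing in $e \otimes \A$ survives as $\pi_\A(e \otimes b) = b$. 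I would decompose $\A_+ \widehat\otimes \A = (\CC e \widehat\otimes \A) \oplus (\A \widehat\otimes \A)$ and track these two pieces separately.

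The contradiction then emerges as follows. Since $\pi_\A$ annihilates the $\A \widehat\otimes \A$ summand (as its image lies in $\A^2 = \{0\}$), the identity $\pi_\A \circ \rho = \mathrm{id}_\A$ forces the $\CC e \widehat\otimes \A$ component of $\rho(x)$ to recover $x$, so $\rho(x) = e \otimes x + (\text{term in } \A\widehat\otimes \A)$. But the left-module property requires $c \cdot \rho(x) = 0$; computing $c \cdot (e \otimes x) = (ce)\otimes x = c \otimes x$, which lies in the $\A \widehat\otimes \A$ summand, one gets that the full left action on $\rho(x)$ has a surviving $c \otimes x$ term. Choosing $c$ and $x$ with $c \otimes x \neq 0$ in $\A \widehat\otimes \A$ (possible since $\A \neq \{0\}$, so $\A \widehat\otimes \A \neq \{0\}$) contradicts $c \cdot \rho(x) = 0$. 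I expect the main obstacle to be handling the tensor-product bookkeeping rigorously — in particular justifying that the natural projection $\A_+ \widehat\otimes \A \to \A \widehat\otimes \A$ is a genuine morphism of the relevant spaces and that $c \otimes x \neq 0$ can be arranged — rather than any conceptual difficulty, since the underlying idea is simply that a trivial module cannot be a nonzero projective module split of the augmentation map.
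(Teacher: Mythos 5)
Your proposal is correct and follows essentially the same route as the paper: both arguments use $\pi_{\A}\circ\rho=\mathrm{id}_{\A}$ together with $\A^2=\{0\}$ to show that the ``$e\otimes\A$'' part of $\rho(a)$ must equal $e\otimes a$, and then the left-module property forces $0=\rho(a\cdot a)=a\cdot\rho(a)=a\otimes a\neq 0$, a contradiction. Your direct-sum decomposition $\A_+\widehat{\otimes}\A=(\CC e\,\widehat{\otimes}\,\A)\oplus(\A\widehat{\otimes}\A)$ is just a cleaner packaging of the paper's explicit series computation; the only point to make explicit when writing it up is that $c\cdot w_x=0$ for the component $w_x\in\A\widehat{\otimes}\A$ (immediate from $\A^2=\{0\}$), so that nothing can cancel the surviving term $c\otimes x$.
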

\begin{proof} 
We shall prove the statement for $\A$ in the case that $\A$ is left projective. Since $\A$ is left projective, there exists a morphism of left  Banach  $\A$-modules
 $\rho: \A \rightarrow \A_+ \widehat{\otimes} \A$ 
such that $\pi_{\A} \circ \rho = {\rm id}_{\A} $, where 
$\pi_{\A}:  \A_+ \widehat{\otimes} \A \rightarrow \A$ is 
 the canonical morphism.
 By assumption, $\A \neq \{0\}$ and so there exists non-zero element $a \in \A$.  Suppose $\A^2 = \{0\}$; thus the element $a^2$ of $\A$ is trivial and $\rho(a^2) = 0$.

By \cite[Theorem 3.6.4]{Sh}, the element $\rho(a)$ from $\A_+ \widehat{\otimes} \A$ 
can be written as 
$$\rho(a)= \sum_{i=1}^{\infty} \lambda_i~(\alpha_i e + a_i) \otimes b_i,\; \text{ for some} \;
\lambda_i, \alpha_i \in {\mathbb C}, \; a_i, b_i \in \A,\;$$ where 
 $ \sum_{i=1}^{\infty} |\lambda_i| < \infty$ and the sequences
$\{\alpha_i e + a_i\}, \{b_i\}$ converge to zero  in $\A_+$ and $\A$ as $i \to \infty$. Thus the equality $\pi_{\A} \circ \rho = {\rm id}_{\A} $ implies
\[
\pi_{\A} \circ \rho(a) =a \neq 0 
\]
and, because $\A^2 = \{0\}$,
\[
a = \pi_{\A} \left(\sum_{i=1}^{\infty} \lambda_i~(\alpha_i e + a_i) \otimes b_i \right) =
\left(\sum_{i=1}^{\infty} \lambda_i~(\alpha_i e + a_i)  b_i \right)= \sum_{i=1}^{\infty} \lambda_i \alpha_i  b_i.
\]
Note that $\rho$ is a morphism of left  Banach  $\A$-modules; thus
\[
\rho(a^2) =a \left(\sum_{i=1}^{\infty} \lambda_i~(\alpha_i e + a_i) \otimes b_i \right) = \sum_{i=1}^{\infty} \lambda_i~(\alpha_i a + a a_i) \otimes b_i.
\]
Then, since $\A^2 = \{0\}$,  we have
\[
 \rho(a^2) = \sum_{i=1}^{\infty} \lambda_i~\alpha_i a  \otimes b_i = a \otimes \left(\sum_{i=1}^{\infty} \lambda_i \alpha_i  b_i\right)=a  \otimes a \neq 0.
\]
This contradicts  $\rho(a^2) = 0$. Therefore,   $\A^2 \neq \{0\}$.
 \end{proof}

\begin{proposition}\label{PropositionIV.2.8-He4} {\rm \cite[Proposition IV.2.8]{He4}}~ Let $\kappa: \A \to \B $ be a Banach algebra homomorphism with dense image.  Suppose $\A$ has a right bounded approximate identity  $e_\nu$, $\nu \in \Lambda$, and $\A$ is left  projective, that is, there is a  morphism of left Banach $\A$-modules
\[ \rho_{\A}: \A \to \A \widehat{\otimes} \A
\]
such that $\pi_{\A} \circ \rho_{\A} = {\rm id}_{\A} $. Then there is a morphism of left Banach $\B$-modules
\[ \rho_{\B}: \B \to \B \widehat{\otimes} \B
\]
such that $\pi_{\B} \circ \rho_{\B} = {\rm id}_{\B} $ and 
$\| \rho_{\B} \| \le \| \kappa \|^2 \|  \rho_{\A}\|  \sup_{\nu} \| e_\nu \|$, and, in particular, the Banach algebra $\B$ is left projective.
\end{proposition}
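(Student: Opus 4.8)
The plan is to transport the splitting $\rho_{\A}$ across $\kappa$ and then use the right approximate identity to extend the construction from the dense subalgebra $\kappa(\A)$ to all of $\B$. First I would form the composition $\sigma = (\kappa \widehat{\otimes} \kappa) \circ \rho_{\A} : \A \to \B \widehat{\otimes} \B$. Two properties of $\sigma$ follow at once from the definitions. Since $\rho_{\A}$ is a left $\A$-module morphism (the action on $\A \widehat{\otimes} \A$ being on the first leg) and $\kappa$ is multiplicative, checking on elementary tensors gives $\sigma(a'a) = \kappa(a') \cdot \sigma(a)$ for all $a', a \in \A$; and since $\pi_{\B} \circ (\kappa \widehat{\otimes} \kappa) = \kappa \circ \pi_{\A}$, the identity $\pi_{\A} \circ \rho_{\A} = {\rm id}_{\A}$ yields $\pi_{\B} \circ \sigma = \kappa$.

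Next I would bring in the right bounded approximate identity. Set $C = \|\kappa\|^2 \|\rho_{\A}\| \sup_\nu \|e_\nu\|$ and note $\|\sigma(e_\nu)\| \le \|\kappa\|^2 \|\rho_{\A}\| \|e_\nu\| \le C$, so the net $\sigma(e_\nu)$ is bounded in $\B \widehat{\otimes} \B$. Consider the uniformly bounded net of operators $\Phi_\nu : \B \to \B \widehat{\otimes} \B$, $\Phi_\nu(b) = b \cdot \sigma(e_\nu)$, each of norm at most $C$. On the dense subalgebra $\kappa(\A)$ this net converges: for $b = \kappa(a)$ the identity for $\sigma$ gives $\Phi_\nu(\kappa(a)) = \kappa(a) \cdot \sigma(e_\nu) = \sigma(a e_\nu)$, and $a e_\nu \to a$ forces $\sigma(a e_\nu) \to \sigma(a)$ by continuity of $\sigma$. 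A standard equicontinuity (Cauchy-net) argument then shows that $\rho_{\B}(b) := \lim_\nu b \cdot \sigma(e_\nu)$ exists in norm for every $b \in \B$, is linear, and satisfies $\|\rho_{\B}\| \le C$, which is the asserted estimate. This passage to the limit is exactly what removes any dependence on a choice of preimage of $b$ under $\kappa$, so injectivity of $\kappa$ is never needed.

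It remains to verify the two defining properties. Left $\B$-linearity is immediate from continuity of the left action: $\rho_{\B}(b'b) = \lim_\nu (b'b) \cdot \sigma(e_\nu) = b' \cdot \lim_\nu b \cdot \sigma(e_\nu) = b' \cdot \rho_{\B}(b)$. For the splitting identity I would compute, using $\pi_{\B}(b \cdot w) = b \cdot \pi_{\B}(w)$ and $\pi_{\B}(\sigma(e_\nu)) = \kappa(\pi_{\A}(\rho_{\A}(e_\nu))) = \kappa(e_\nu)$, that $\pi_{\B}(\rho_{\B}(b)) = \lim_\nu b \kappa(e_\nu)$. Here I invoke that $\kappa(e_\nu)$ is a right bounded approximate identity for $\B$: it is uniformly bounded by $\|\kappa\| \sup_\nu \|e_\nu\|$ and $b \kappa(e_\nu) \to b$ holds first on $\kappa(\A)$ (since $\kappa(a)\kappa(e_\nu) = \kappa(a e_\nu) \to \kappa(a)$) and hence on all of $\B$. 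This gives $\pi_{\B} \circ \rho_{\B} = {\rm id}_{\B}$, and composing $\rho_{\B}$ with the inclusion $\B \widehat{\otimes} \B \hookrightarrow \B_+ \widehat{\otimes} \B$ produces the splitting demanded by the projectivity criterion of Section \ref{definitions}, so $\B$ is left projective.

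The main obstacle I anticipate is precisely this passage from the dense subalgebra to all of $\B$: the naive prescription $\rho_{\B}(\kappa(a)) = \sigma(a)$ need not be well defined when $\kappa$ is not injective, and even when it is, one cannot bound $\|\sigma(a)\|$ by $\|\kappa(a)\|$ directly. Defining $\rho_{\B}$ through the limit $\lim_\nu b \cdot \sigma(e_\nu)$ rather than pointwise on preimages resolves both difficulties at once; the approximate identity is the single device that guarantees existence of the limit, supplies the norm estimate $\|\rho_{\B}\| \le \|\kappa\|^2 \|\rho_{\A}\| \sup_\nu \|e_\nu\|$, and furnishes the approximate identity $\kappa(e_\nu)$ of $\B$ needed to establish $\pi_{\B} \circ \rho_{\B} = {\rm id}_{\B}$.
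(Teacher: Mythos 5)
Your proof is correct. The paper itself gives no argument for this proposition --- it is quoted from Helemskii's book (\cite[Proposition IV.2.8]{He4}) --- and your construction $\rho_{\B}(b)=\lim_\nu b\cdot(\kappa\widehat{\otimes}\kappa)\rho_{\A}(e_\nu)$, with the Cauchy-net argument on the dense image and the verification that $\kappa(e_\nu)$ is a right bounded approximate identity for $\B$, is exactly the standard route to this result and yields the stated norm bound.
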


\begin{proposition}\label{not-loc-triv-A_t-unif-proj} Let $\Omega$ be a locally compact Hausdorff space, let $\mathcal{U}=\{ \Omega,A_{x}, \Theta \}$ be a continuous field of Banach algebras, and let $\A$ be defined by $\mathcal{U}$. Suppose $\A$ is projective in $\A$-$\mathrm{mod}$ ($\mathrm{mod}$-$\A$, $\A$-$\mathrm{mod}$-$\A$) and  has a right (left, two-sided) bounded approximate identity. Then the Banach algebras $A_{x}$, $x \in \Omega$, are uniformly left (right, bi-)projective.
\end{proposition}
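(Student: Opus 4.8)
The plan is to derive all three cases from Proposition~\ref{PropositionIV.2.8-He4} applied fibrewise to the evaluation homomorphisms $\tau_x\colon\A\to A_x$, $x\in\Omega$. Recall from Section~\ref{definitions} that each $\tau_x$ has dense image and satisfies $\|\tau_x\|\le 1$; this uniform norm bound is precisely what will upgrade the fibrewise conclusions of Proposition~\ref{PropositionIV.2.8-He4} to a \emph{uniform} statement.

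I shall carry out the left projective case in detail. First I would match the hypotheses of Proposition~\ref{PropositionIV.2.8-He4}, which asks for a section $\rho_\A\colon\A\to\A\widehat{\otimes}\A$ of $\pi_\A$, whereas projectivity of $\A$ in $\A$-$\mathrm{mod}$ only provides a morphism $\rho\colon\A\to\A_+\widehat{\otimes}\A$ of left $\A$-modules with $\pi_\A\circ\rho=\mathrm{id}_\A$. Splitting $\A_+\widehat{\otimes}\A=(\CC e\widehat{\otimes}\A)\oplus(\A\widehat{\otimes}\A)$ and writing $\rho(a)=e\otimes\phi(a)+\theta(a)$, one notes that $b\cdot\rho(a)=b\otimes\phi(a)+b\cdot\theta(a)$ lies entirely in $\A\widehat{\otimes}\A$; so the left module identity $\rho(ba)=b\cdot\rho(a)$, on comparing $\CC e\widehat{\otimes}\A$-components, gives $\phi(ba)=0$ for all $a,b\in\A$. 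Since $\A$ has a right bounded approximate identity, $\overline{\A\A}=\A$, and continuity of $\phi$ forces $\phi=0$. Thus $\rho$ already maps into $\A\widehat{\otimes}\A$, as required.

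Now for each fixed $x\in\Omega$ I would apply Proposition~\ref{PropositionIV.2.8-He4} with $\kappa=\tau_x$ and $\B=A_x$, using the right bounded approximate identity $(e_\nu)$ of $\A$. This produces a morphism of left $A_x$-modules $\rho_{A_x}\colon A_x\to A_x\widehat{\otimes} A_x$ with $\pi_{A_x}\circ\rho_{A_x}=\mathrm{id}_{A_x}$ and $\|\rho_{A_x}\|\le\|\tau_x\|^2\,\|\rho\|\,\sup_\nu\|e_\nu\|\le\|\rho\|\,\sup_\nu\|e_\nu\|$. The final bound does not depend on $x$, so $\sup_{x\in\Omega}\|\rho_{A_x}\|<\infty$. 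Composing each $\rho_{A_x}$ with the isometric inclusion $A_x\widehat{\otimes} A_x\hookrightarrow(A_x)_+\widehat{\otimes} A_x$ of left $A_x$-modules, which is compatible with the canonical maps $\pi_{A_x}$, yields sections of the form demanded in the definition of uniform left projectivity; hence the $A_x$ are uniformly left projective.

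The right projective case is the mirror image, obtained from the right-module version of Proposition~\ref{PropositionIV.2.8-He4} with the left bounded approximate identity. The bimodule case requires the two-sided analogue of Proposition~\ref{PropositionIV.2.8-He4}, which I expect to be the only step needing a genuinely new argument. Given a bimodule section $\rho$ of $\pi_\A$ into $\A\widehat{\otimes}\A$ (extracted from the two-sided bounded approximate identity $(e_\nu)$ exactly as above) I would set $\rho_{A_x}(b)=\lim_\nu b\cdot(\tau_x\widehat{\otimes}\tau_x)(\rho(e_\nu))$; the net is uniformly bounded, and for $b=\tau_x(a)$ it converges to $(\tau_x\widehat{\otimes}\tau_x)(\rho(a))$ using $ae_\nu\to a$, so by density of $\tau_x(\A)$ the limit exists for every $b\in A_x$. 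Crucially, on $\tau_x(\A)$ this also equals $\lim_\nu(\tau_x\widehat{\otimes}\tau_x)(\rho(e_\nu))\cdot b$ (now using $e_\nu a\to a$), so the two one-sided nets agree there and hence, by uniform boundedness and density, on all of $A_x$. The first expression makes $\rho_{A_x}$ a left module morphism and the second a right module morphism, so $\rho_{A_x}$ is an $A_x$-bimodule section of $\pi_{A_x}$ with $\|\rho_{A_x}\|\le\|\tau_x\|^2\|\rho\|\sup_\nu\|e_\nu\|\le\|\rho\|\sup_\nu\|e_\nu\|$, uniformly in $x$. The main obstacle is thus this bimodule transfer: checking that the two one-sided constructions produce the same bounded map and that it is simultaneously left and right linear. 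Once that is in place, uniformity again comes for free from $\|\tau_x\|\le1$.
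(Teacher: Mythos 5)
Your proposal is correct and follows essentially the same route as the paper: both reduce the statement to Proposition~\ref{PropositionIV.2.8-He4} applied to the evaluation homomorphisms $\tau_x\colon\A\to A_x$, with the uniform bound $\|\tau_x\|\le 1$ giving $\sup_{x\in\Omega}\|\rho_{A_x}\|\le\|\rho_\A\|\sup_\nu\|e_\nu\|<\infty$. The paper dispatches the remaining cases with ``a similar proof works for right- and bi-projectivity'', so your explicit reduction of the section's codomain from $\A_+\widehat{\otimes}\A$ to $\A\widehat{\otimes}\A$ and your construction of the bimodule transfer via $\lim_\nu b\cdot(\tau_x\widehat{\otimes}\tau_x)(\rho(e_\nu))$ merely supply details the paper leaves implicit.
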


\begin{proof} For every $x \in \Omega$, $\tau_{x}: \A \to A_{x}$ is a  Banach algebra homomorphism with dense image  and $\|\tau_{x}\| \le 1$. By Proposition \ref{PropositionIV.2.8-He4},  there is a morphism of left Banach $A_{x}$-modules
\[ \rho_{A_{x}}: A_{x} \to A_{x} \widehat{\otimes} A_{x}
\]
such that $\pi_{A_{x}} \circ \rho_{A_{x}} = {\rm id}_{A_{x}} $ and 
$\| \rho_{A_{x}} \| \le \|  \rho_{\A}\|  \sup_{\nu} \| e_\nu \|$.
Therefore  the left projectivity of $\A$ implies the uniform left projectivity of the Banach algebras $A_{x}$, $x \in \Omega$.
A similar proof works for  right- and bi- projectivity.
 \end{proof}

\begin{corollary}\label{C*not-loc-triv-A_t-unif-proj} Let $\mathcal{U} = \{\Omega,A_t,\Theta\}$ be a continuous  field  of  $C^*$-algebras over  a locally compact Hausdorff  space 
$\Omega$, and let the $C^*$-algebra $\A$ be defined by $\mathcal{U}$. Suppose $\A$ is projective in $\A$-$\mathrm{mod}$ ($\mathrm{mod}$-$\A$, $\A$-$\mathrm{mod}$-$\A$). Then the $C^*$-algebras $A_{x}$, $x \in \Omega$, are uniformly left (right, bi-)projective.
\end{corollary}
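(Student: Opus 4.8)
The plan is to reduce Corollary~\ref{C*not-loc-triv-A_t-unif-proj} to Proposition~\ref{not-loc-triv-A_t-unif-proj} by verifying that, in the $C^*$-algebra setting, the hypotheses of that proposition are automatically satisfied. Proposition~\ref{not-loc-triv-A_t-unif-proj} requires two things: that $\A$ be projective in the relevant module category (which is assumed here verbatim) and that $\A$ possess a one-sided (respectively two-sided) bounded approximate identity of the appropriate handedness. Thus the only gap between the corollary and the proposition is the existence of a bounded approximate identity. The key observation is that every $C^*$-algebra has a two-sided approximate identity consisting of positive elements of norm at most $1$, hence a contractive two-sided bounded approximate identity. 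This is a standard structural fact about $C^*$-algebras, and since $\A$ is a $C^*$-algebra by Definition~\ref{cont-fields-C*-alg}, it applies directly.

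First I would invoke this standard fact: the $C^*$-algebra $\A$ defined by $\mathcal{U}$ has a two-sided bounded approximate identity $(e_\nu)_{\nu \in \Lambda}$ with $\sup_\nu \|e_\nu\| \le 1$. In particular this approximate identity serves simultaneously as a right bounded approximate identity, a left bounded approximate identity, and a two-sided one, so whichever of the three projectivity hypotheses is in force, the corresponding approximate-identity hypothesis of Proposition~\ref{not-loc-triv-A_t-unif-proj} is met. Next I would simply apply Proposition~\ref{not-loc-triv-A_t-unif-proj} in the appropriate case: if $\A$ is projective in $\A$-$\mathrm{mod}$, the proposition yields uniform left projectivity of the $A_x$; if in $\mathrm{mod}$-$\A$, uniform right projectivity; and if in $\A$-$\mathrm{mod}$-$\A$, uniform biprojectivity. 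The bound $\sup_\nu \|e_\nu\| \le 1$ even lets one carry the explicit norm estimate $\|\rho_{A_x}\| \le \|\rho_{\A}\|$ through uniformly in $x$, confirming the uniformity.

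I do not expect any genuine obstacle here, as the corollary is essentially a specialisation of the preceding proposition; the only point requiring care is the justification of the bounded approximate identity. One should confirm that the fact being used is indeed that \emph{every} $C^*$-algebra (not merely a unital one, and not merely one with a strictly positive element) has a contractive two-sided approximate identity — this is the content of the standard approximate-unit theorem for $C^*$-algebras, and it is precisely what removes the approximate-identity hypothesis that had to be imposed by hand in Proposition~\ref{not-loc-triv-A_t-unif-proj}. With that in place the proof is a one-line application, and the remaining verification (that the single contractive approximate identity simultaneously discharges the left, right, and two-sided hypotheses) is immediate.
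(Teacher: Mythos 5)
Your proof is correct and is exactly the argument the paper intends: the corollary is stated as an immediate consequence of Proposition \ref{not-loc-triv-A_t-unif-proj}, the only point being that every $C^*$-algebra automatically has a contractive two-sided bounded approximate identity, which discharges the approximate-identity hypothesis in each of the three cases.
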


Hence, for the Banach algebra $\A$ defined by a {\it continuous} field  $\mathcal{U} = \{\Omega,A_t,\Theta\}$, 
Proposition \ref{not-loc-triv-A_t-unif-proj} shows that, under the condition of the existence of a bounded approximate identity in $\A$, the projectivity of $\A$ implies the uniform projectivity of the Banach algebras $A_{x}$, $x \in \Omega$. In the next proposition we will show that, for the  Banach algebra $\A$ defined by a {\em locally trivial continuous} field $\mathcal{U} = \{\Omega,A_t,\Theta\}$,
 we do not need the condition  of the existence of a bounded  approximate identity in $\A$ to get  the uniform projectivity of the Banach algebras $A_{x}$, $x \in \Omega$, from the projectivity of $\A$.

\begin{lemma}\label{a-in-Theta} Let  $\Omega$ be a locally compact Hausdorff space, let $\mathcal{U} = \{\Omega,A_t,\Theta\}$ be a locally trivial continuous field of Banach algebras  $\A_{x}$, and let the Banach algebra $\A$ be defined by $\mathcal{U}$.
For every $ a_y \in \A_{y}$ there is $a \in \A$ such that $a(y) = a_y$.
\end{lemma}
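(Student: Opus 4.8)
The plan is to exploit local triviality to manufacture an \emph{exact} local section through $a_y$, and then to cut it off with a scalar bump function so as to obtain a genuine element of $\A$. Note that Axiom~(i) of Definition~\ref{cont-fields} only supplies global sections whose values are \emph{dense} in $A_y$, so local triviality is precisely what upgrades density to the exact equality $a(y)=a_y$ demanded here.

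First I would choose, by local triviality (Example~\ref{trivial-cont-fields}), an open neighbourhood $V$ of $y$ and an isomorphism $\phi=(\phi_t)_{t\in V}$ of $\mathcal{U}|_V$ onto the constant field over $V$ defined by some Banach algebra $A$, so that each $\phi_t\colon A_t\to A$ is an isometric isomorphism carrying $\Theta|_V$ onto the algebra of continuous maps $V\to A$. The constant map $t\mapsto \phi_y(a_y)$ is continuous, so its pullback $b(t):=\phi_t^{-1}(\phi_y(a_y))$ $(t\in V)$ lies in $\Theta|_V$; that is (Definition~\ref{cont-fields-on-Y}), $b$ is a continuous vector field over $V$ with $b(y)=a_y$.

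Next, since $\Omega$ is locally compact Hausdorff, Urysohn's lemma provides a continuous $f\colon\Omega\to[0,1]$ with $f(y)=1$ and with compact support $\mathrm{supp}\,f\subseteq V$. I would then define $a(t)=f(t)b(t)$ for $t\in V$ and $a(t)=0$ for $t\notin V$; this is consistent because $f$ vanishes off $\mathrm{supp}\,f\subseteq V$. Clearly $a$ has compact support, hence $\|a(t)\|$ vanishes at infinity and $a\in\A$ once we know $a\in\Theta$ (Definition~\ref{cont-fields-C*-alg}), while $a(y)=f(y)b(y)=a_y$ as required.

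The crux, and the step I expect to be the main obstacle, is verifying $a\in\Theta$ via Axiom~(iii) of Definition~\ref{cont-fields}. I would check continuity of $a$ at each $t_0\in\Omega$ in two cases. If $t_0\notin V$, then $t_0\notin\mathrm{supp}\,f$, so $a$ vanishes on the neighbourhood $\Omega\setminus\mathrm{supp}\,f$ of $t_0$ and is there approximated by $0\in\Theta$. If $t_0\in V$, a whole neighbourhood of $t_0$ lies in $V$, where $a=fb$; given $\varepsilon>0$, choose $x''\in\Theta$ with $\|b(t)-x''(t)\|\le\varepsilon$ near $t_0$ (possible since $b\in\Theta|_V$). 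The essential subpoint is that $fx''\in\Theta$: using that $\Theta$ is a subalgebra and that $\|x''(\cdot)\|$ is continuous by Axiom~(ii), one checks that near any point $t_1$ the difference $\|f(t)x''(t)-f(t_1)x''(t)\|=|f(t)-f(t_1)|\,\|x''(t)\|$ is small, so $fx''$ is locally uniformly approximated by the scalar multiples $f(t_1)x''\in\Theta$, whence $fx''\in\Theta$ by Axiom~(iii). Then $\|a(t)-f(t)x''(t)\|=|f(t)|\,\|b(t)-x''(t)\|\le\varepsilon$ near $t_0$, establishing continuity of $a$ at $t_0$. Applying Axiom~(iii) once more gives $a\in\Theta$, completing the proof.
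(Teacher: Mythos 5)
Your proof is correct and follows essentially the same route as the paper: pull back the constant section $t\mapsto\phi_t^{-1}(\phi_y(a_y))$ through the local trivialization, multiply by a scalar bump function supported in the trivializing neighbourhood, and check membership in $\Theta$ via Axiom (iii). The only difference is that you verify explicitly the step that a continuous scalar function times a continuous vector field is again continuous, which the paper simply cites as Proposition 10.1.9 of Dixmier.
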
 
\begin{proof} By \cite[Theorem 5.17]{Ke}, $\Omega$ is regular and, by \cite[Theorem 3.3.1]{En}, $\Omega$ is a Tychonoff space. By assumption, $\mathcal{U}$ is locally trivial and so, for each $y \in \Omega$, there are open neighbourhoods $V_y$ and $U_y$ of $y$ such that $\overline{V_y} \subset U_y$, $\mathcal{U}|_{U_y}$ is trivial and $V_y$ is relatively compact. For each $y \in \Omega$, fix a continuous function $f_y\in C_{0}(\Omega)$ such that $0\leq f_y \leq 1$, $f_y(y)=1$ and $f_y|_{\Omega\setminus U_{y}}=0$. 
Let $\phi = (\phi_x)_{x \in U_y}$ be an isomorphism of $\mathcal{U}|_{U_y}$ onto the trivial continuous field of Banach algebras over $U_y$  where, for each $y \in \Omega$, $\phi_x$ is an isometric isomorphism of Banach algebras.
For an arbitrary element $a_y \in A_y$, define a field $a$ to be equal to
$a(x) = f_y(x) (\phi^{-1}_x \circ \phi_y)(a_y)$, $x \in \Omega$. 
By Property (iv) of Definition \ref{cont-fields} and \cite[Proposition 10.1.9]{Di}, the field $a$ is continuous and $a \in \Theta$. Since $\|a(x)\| \to 0$ as $x \to \infty$, we have  $a \in \A$.
 \end{proof}

\begin{proposition}\label{A_t-unif-proj} Let $\Omega$ be a locally compact Hausdorff  space,  let $\mathcal{U} = \{\Omega,A_t,\Theta\}$
be a  locally trivial continuous field of Banach algebras,  and let the Banach algebra $\A$ be defined by $\mathcal{U}$. Suppose  $\A$ is projective in $\A$-$\mathrm{mod}$ ($\mathrm{mod}$-$\A$, $\A$-$\mathrm{mod}$-$\A$). Then the Banach algebras $A_{x}$, $x \in \Omega$, are uniformly left (right, bi-)projective.
\end{proposition}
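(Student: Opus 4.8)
The plan is to transport the projectivity morphism of $\A$ down to each fibre along the evaluation map $\tau_x$, using the bounded linear lifting supplied by Lemma \ref{a-in-Theta} in place of the bounded approximate identity required in Proposition \ref{PropositionIV.2.8-He4}. Fix $x\in\Omega$ and, using local triviality, choose a trivialising neighbourhood $U_x$ with isometric algebra isomorphisms $\phi=(\phi_t)_{t\in U_x}$ onto the constant field and a bump $f_x\in C_0(\Omega)$ with $0\le f_x\le1$, $f_x(x)=1$ and $\mathrm{supp}\,f_x\subset U_x$. As in the proof of Lemma \ref{a-in-Theta}, the formula $L_x(a_x)(t)=f_x(t)(\phi_t^{-1}\circ\phi_x)(a_x)$ defines a bounded linear map $L_x\colon A_x\to\A$ with $\|L_x\|\le1$ and $\tau_x\circ L_x=\mathrm{id}_{A_x}$. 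Writing $\rho\colon\A\to\A_+\widehat{\otimes}\A$ for the given left $\A$-module splitting of $\pi_\A$ and $\Phi:=(\tau_x)_+\widehat{\otimes}\tau_x\colon\A_+\widehat{\otimes}\A\to(A_x)_+\widehat{\otimes}A_x$, I would set
\[
\rho_{A_x}:=\Phi\circ\rho\circ L_x .
\]
Since $\|(\tau_x)_+\|\le1$, $\|\tau_x\|\le1$ and $\|L_x\|\le1$, this already yields the uniform bound $\|\rho_{A_x}\|\le\|\rho\|$.

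Two routine intertwining identities for $\Phi$ then do most of the work. First, a direct check on elementary tensors gives $\pi_{A_x}\circ\Phi=\tau_x\circ\pi_\A$, whence $\pi_{A_x}\circ\rho_{A_x}=\tau_x\circ\pi_\A\circ\rho\circ L_x=\tau_x\circ L_x=\mathrm{id}_{A_x}$, so $\rho_{A_x}$ splits $\pi_{A_x}$. Second, for $c\in\A$ and $\xi\in\A_+\widehat{\otimes}\A$ one has $\Phi(c\cdot\xi)=\tau_x(c)\cdot\Phi(\xi)$, because $(\tau_x)_+(cu)=\tau_x(c)\,(\tau_x)_+(u)$ for $u\in\A_+$. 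Combining this with the fact that $\rho$ is a morphism of left $\A$-modules reduces the module identity $\rho_{A_x}(c_x a_x)=c_x\cdot\rho_{A_x}(a_x)$ to showing that $\Phi(\rho(d))=0$, where $d:=L_x(c_x a_x)-L_x(c_x)L_x(a_x)\in\A$.

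The heart of the argument is this last vanishing, and it is exactly here that local triviality is essential. Using that each $\phi_t$ is an algebra homomorphism one computes $d(t)=(f_x-f_x^2)(t)\,(\phi_t^{-1}\circ\phi_x)(c_x a_x)$, which vanishes at $x$; but more is needed, since in general $\Phi\circ\rho$ does not annihilate $\ker\tau_x$ (this is precisely why Proposition \ref{PropositionIV.2.8-He4} must assume a bounded approximate identity). The key observation is that $d$ can be written as an honest algebra product $d=c\cdot a'$ in $\A$ with $c$ vanishing at $x$: taking $c(t)=(f_x-f_x^2)(t)(\phi_t^{-1}\circ\phi_x)(c_x)$ and $a'(t)=\chi(t)(\phi_t^{-1}\circ\phi_x)(a_x)$, where $\chi\in C_0(\Omega)$ satisfies $\chi\equiv1$ on $\mathrm{supp}(f_x-f_x^2)$ and $\mathrm{supp}\,\chi\subset U_x$, one checks $c\cdot a'=d$ and $c(x)=0$; both $c$ and $a'$ lie in $\A$ by Lemma \ref{a-in-Theta}. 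Consequently, since $\rho$ is a left module morphism and $\Phi$ intertwines the actions,
\[
\Phi(\rho(d))=\Phi\big(\rho(c\cdot a')\big)=\Phi\big(c\cdot\rho(a')\big)=\tau_x(c)\cdot\Phi(\rho(a'))=0,
\]
as $\tau_x(c)=c(x)=0$. Thus $\rho_{A_x}$ is a morphism of left $A_x$-modules splitting $\pi_{A_x}$, with norm bounded by $\|\rho\|$ uniformly in $x$, which is the uniform left projectivity of the $A_{x}$. The right and bimodule cases are handled by the symmetric construction (using a right inverse on the second factor, respectively on both factors), so the main obstacle throughout is the factorisation of the multiplicative defect $d$ through $\ker\tau_x$, supplied by the local trivialisation.
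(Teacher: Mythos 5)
Your proposal is correct and follows essentially the same route as the paper: both define $\rho_{A_x}=\bigl((\tau_x)_+\widehat{\otimes}\tau_x\bigr)\circ\rho\circ L_x$ with $L_x(a_x)(t)=f_x(t)(\phi_t^{-1}\circ\phi_x)(a_x)$, obtain the uniform bound $\|\rho_{A_x}\|\le\|\rho\|$ from the contractivity of the evaluation maps, and verify the splitting and left-module identities from the left-module property of $\rho$. The only (cosmetic) difference is in checking the module identity: the paper refactors $f_x\,\phi^{-1}\phi_x(a_xb_x)$ as a product of lifts using $\sqrt{f_x}$, whereas you isolate the multiplicative defect $d=L_x(c_xa_x)-L_x(c_x)L_x(a_x)$ and kill $\Phi(\rho(d))$ by factoring $d=c\cdot a'$ with $c(x)=0$ --- both devices exploit the same freedom in the choice of bump function.
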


\begin{proof} We shall prove the statement  for any   $\A$ which is  left projective.  Since $\A$ is left projective, there exists a morphism of left  Banach  $\A$-modules
 $\rho: \A \rightarrow \A_+ \widehat{\otimes} \A$ 
such that $\pi_{\A} \circ \rho = {\rm id}_{\A} $.

By assumption, $\mathcal{U}$ is locally trivial and so, for each $y \in \Omega$, there are open neighbourhoods $V_y$ and $U_y$ of $y$ such that $\overline{V_y} \subset U_y$, $\mathcal{U}|_{U_y}$ is trivial and $V_y$ is relatively compact. For each $y \in \Omega$, fix a continuous function $f_y\in C_{0}(\Omega)$ such that $0\leq f_y \leq 1$, $f_y(y)=1$ and $f_y|_{\Omega\setminus U_{y}}=0$. 
Let $\phi = (\phi_x)_{x \in U_y}$ be an isomorphism of $\mathcal{U}|_{U_y}$ onto the trivial continuous field of Banach algebras over $U_y$  where, for each $y \in \Omega$, $\phi_x$ is an isometric isomorphism of Banach algebras.
As in Lemma \ref{a-in-Theta}, for an arbitrary element $a_y \in A_y$, we define $a \in \A$ to be equal to
$a(x) = f_y(x) (\phi^{-1}_x \circ \phi_y)(a_y)$, $x \in \Omega$. 
We consider a map
\begin{align*}
\rho_{y}:A_{y} & \longrightarrow A_{y_{+}}\widehat{\otimes}A_{y}
\\
a_y & \longmapsto (\tau_{y_{+}}\otimes\tau_{y})\rho( f_y \phi^{-1}( \phi_y(a_y))).
\end{align*} 
It is easy to check that $\rho_{y}$ is a bounded linear operator and
\[ \|\rho_{y}\| = \sup_{\|a_y\|\leq 1}\|(\tau_{y_{+}}\otimes\tau_{y})\rho(f_y \phi^{-1} (\phi_y(a_y))\| \leq \|\tau_{y}\|^{2}\|\rho\|=\|\rho\|.
\]
We shall show that $\rho_{x}$ is a morphism of  Banach left $A_x$-modules. Since $\rho$ is a morphism of Banach left $\A$-modules, for all $a_x, b_x \in A_{x}$, we have
\begin{align*}
\rho_{x}(a_x b_x) & = (\tau_{x_{+}}\otimes\tau_{x})\rho(f_x \phi^{-1}\phi_{x}(a_x b_x))
\\
& = a_x (\tau_{x_{+}}\otimes\tau_{x})\rho(f_x^{\frac{1}{2}}\phi^{-1}\phi_{x}( b_x))
\\
& = 
(\tau_{x_{+}}\otimes\tau_{x})\rho(f_x \phi^{-1}\phi_{x}(a_x) f_x^{\frac{1}{2}}\phi^{-1}\phi_{x}(b_x))
\\
& = a_x (\tau_{x_{+}}\otimes\tau_{x})\rho(f_x \phi^{-1}\phi_{x}( b_x))
\\
& = a_x \rho_{x}(b_x).
\end{align*}
For all $a_x \in A_{x}$,
\begin{align*}
(\pi_{A_x}\circ\rho_{x})(a_x) & = \pi_{A_x}((\tau_{x_{+}}\otimes\tau_{x})\rho(f_x \phi^{-1} \phi_{x}(a_x)))
\\
& = \tau_{x}(\pi_{\A} \circ \rho)(f_x \phi^{-1} \phi_{x}(a_x))
\\
& = \tau_{x}(f_x\phi^{-1}\phi_{x}(a_x)) = a_x.
\end{align*}
Thus  $\rho_{x}$ is a morphism of Banach left $A_x$-modules
such that $\pi_{A_x}\circ\rho_{x}=\id_{\it A_{x}}$ and 
$ \|\rho_{x}\| \le \|\rho\|$. Therefore the Banach algebras $A_{x}$, $x \in \Omega$, are uniformly left projective.
\end{proof}

\begin{proposition}\label{dg-U-A_x-H^2} Let $\Omega$ be a locally compact Hausdorff  space, and let $\mathcal{U} = \{\Omega,A_t,\Theta\}$
 be either

{\rm (a)}  a continuous field of Banach algebras such that $\A$ defined by $\mathcal{U}$ has a right  bounded approximate identity; or 

{\rm (b)}  a  locally trivial continuous field of Banach algebras.\\ 
Suppose also that the Banach algebras $A_{x}$, $x \in \Omega$, are not uniformly left (right) projective. Then\\
{\rm (i)} there exists a Banach $\A$-bimodule $X$ such that 
$\H^2(\A,X) \neq \{0 \}$; and\\
{\rm (ii)} there exists a strongly unsplittable singular extension of the Banach algebra $\A$.
\end{proposition}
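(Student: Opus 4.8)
The plan is to derive both statements from a single fact: under either hypothesis, the algebra $\A$ fails to be left projective. This is exactly the contrapositive of the preceding two propositions. In case (a), where $\A$ carries a right bounded approximate identity, Proposition~\ref{not-loc-triv-A_t-unif-proj} asserts that left projectivity of $\A$ forces the fibres $A_{x}$ to be uniformly left projective; in case (b), where $\mathcal{U}$ is locally trivial, Proposition~\ref{A_t-unif-proj} gives the same implication with no approximate-identity hypothesis. Since we assume the $A_{x}$ are \emph{not} uniformly left projective, in either case $\A$ is not left projective.

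Next I would read off the homological content of this failure. The canonical morphism $\pi_{\A}\colon \A_+\widehat{\otimes}\A\to\A$ admits the bounded linear right inverse $a\mapsto e\otimes a$, so the short exact sequence of left Banach $\A$-modules
\[
0\longrightarrow K\longrightarrow \A_+\widehat{\otimes}\A\stackrel{\pi_{\A}}{\longrightarrow}\A\longrightarrow 0,\qquad K=\ker\pi_{\A},
\]
is admissible. By definition $\A$ is left projective precisely when $\pi_{\A}$ has a left $\A$-module right inverse, i.e.\ when this sequence splits in $\A$-$\mathrm{mod}$; its failure to split means that its class in the relative group $\mathrm{Ext}^{1}_{\A\text{-}\mathrm{mod}}(\A,K)$ is non-zero, so $\mathrm{Ext}^{1}_{\A\text{-}\mathrm{mod}}(\A,K)\neq\{0\}$.

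I would then convert this into a statement about Hochschild cohomology. Regard $K$ as a Banach $\A$-bimodule $X$ by retaining its left action and taking the right action to be zero, so that $X$ is a right annihilator bimodule, $X\cdot\A=\{0\}$. The technical heart is the natural isomorphism
\[
\H^{2}(\A,X)\;\cong\;\mathrm{Ext}^{1}_{\A\text{-}\mathrm{mod}}(\A,X),
\]
valid for every right annihilator bimodule $X$. To prove it I would compare complexes: for such an $X$ the final summand $(-1)^{n+1}f(a_1,\dots,a_n)\cdot a_{n+1}$ of each Hochschild coboundary $\delta^{n}$ drops out, and applying $h_{\A\text{-}\mathrm{mod}}(-,X)$ to the bar resolution $\cdots\to\A_+\widehat{\otimes}\A\widehat{\otimes}\A\to\A_+\widehat{\otimes}\A\xrightarrow{\pi_{\A}}\A\to 0$ yields, under the identifications $h_{\A\text{-}\mathrm{mod}}(\A_+\widehat{\otimes}\A^{\widehat{\otimes}(n+1)},X)\cong C^{n+1}(\A,X)$, the very same differentials; a diagram chase then matches $\ker\delta^{2}/\mathrm{im}\,\delta^{1}$ with $\mathrm{Ext}^{1}$. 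Combining the isomorphism with $\mathrm{Ext}^{1}_{\A\text{-}\mathrm{mod}}(\A,K)\neq\{0\}$ gives $\H^{2}(\A,X)\neq\{0\}$, which is (i).

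For (ii) I would invoke the classification of extensions (see \cite{BDL}): admissible singular extensions of $\A$ by a fixed bimodule $X$, taken up to strong equivalence, are parametrised by $\H^{2}(\A,X)$, the zero class being represented by the strongly splittable semidirect-product extension. Since $\H^{2}(\A,X)\neq\{0\}$, an extension representing any non-zero class is admissible yet strongly unsplittable, giving (ii); the right annihilator and right-projective versions are obtained symmetrically. I expect the main obstacle to be the careful verification of the isomorphism $\H^{2}(\A,X)\cong\mathrm{Ext}^{1}_{\A\text{-}\mathrm{mod}}(\A,X)$ in the relative (Banach, admissible) setting---checking that the bar resolution is an admissible projective resolution and that the morphism-space identifications intertwine $\delta^{\bullet}$ with the resolution differential---and, secondarily, pinning down the extension classification in the strong rather than merely algebraic splitting sense.
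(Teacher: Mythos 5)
Your argument is correct and follows essentially the same route as the paper's proof: both deduce from the contrapositives of Proposition \ref{not-loc-triv-A_t-unif-proj} (case (a)) and Proposition \ref{A_t-unif-proj} (case (b)) that $\A$ is not left projective, and then obtain (i) and (ii) from the equivalence of left projectivity with the vanishing of $\H^2(\A,X)$ for right annihilator bimodules $X$ together with the classification of singular admissible extensions by $\H^2$. The only difference is that where the paper simply cites \cite[Proposition IV.2.10(I)]{He4} and \cite[Theorem I.1.10]{He4}, you sketch proofs of these facts via the bar resolution and the identification $\H^{2}(\A,X)\cong\mathrm{Ext}^{1}_{\A\text{-}\mathrm{mod}}(\A,X)$; those sketches are sound.
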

\begin{proof} By Proposition \ref{not-loc-triv-A_t-unif-proj} in case (a) and by Proposition \ref{A_t-unif-proj} in  case (b), $\A$ is not left  projective. By  \cite[Proposition IV.2.10(I)]{He4}, a Banach algebra $\A$ is left projective if and only if $ {\H}^2(\A,X) = \{0 \}$ for any right annihilator Banach  $\A$-bimodule $X$.
By \cite{Jo2} or \cite[Theorem I.1.10]{He4}, for a Banach $\A$-bimodule $X$, $ {\H}^2(\A,X) = \{0 \}$ if and only if all the singular extensions of $\A$ by  $X$ split strongly.
\end{proof}

\begin{lemma}\label{A_x^2neq0} Let $\Omega$ be a locally compact Hausdorff  space,  let $\mathcal{U} = \{\Omega,A_t,\Theta\}$
 be a locally trivial continuous field of non-zero Banach algebras, and let the Banach algebra $\A$ be defined by $\mathcal{U}$. Suppose  $\A$ is projective in $\A$-$\mathrm{mod}$ or in $\mathrm{mod}$-$\A$. Then for each $x \in \Omega$, $A_{x}^2 \neq \{0\}$.
\end{lemma}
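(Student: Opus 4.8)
The plan is to reduce the statement about each fibre $A_x^2 \neq \{0\}$ to the already-established machinery, namely Proposition \ref{A_t-unif-proj} together with Lemma \ref{A^2neq0}. Since $\A$ is assumed projective in $\A$-$\mathrm{mod}$ (or $\mathrm{mod}$-$\A$) and $\mathcal{U}$ is a locally trivial continuous field, Proposition \ref{A_t-unif-proj} immediately yields that the Banach algebras $A_x$, $x \in \Omega$, are uniformly left (respectively right) projective. In particular, for each fixed $x \in \Omega$, the single algebra $A_x$ is left (right) projective: there is a morphism of left Banach $A_x$-modules $\rho_{A_x}: A_x \to (A_x)_+ \widehat{\otimes} A_x$ with $\pi_{A_x} \circ \rho_{A_x} = \mathrm{id}_{A_x}$.

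With that in hand, the argument is a direct application of Lemma \ref{A^2neq0}. That lemma states precisely that a nonzero Banach algebra which is projective in $\A$-$\mathrm{mod}$ or $\mathrm{mod}$-$\A$ satisfies $\A^2 \neq \{0\}$. So first I would fix an arbitrary $x \in \Omega$ and observe that $A_x \neq \{0\}$ by the hypothesis that the field consists of non-zero Banach algebras. Then I would apply Lemma \ref{A^2neq0} with $A_x$ in place of $\A$, using the left (right) projectivity of $A_x$ obtained in the previous step, to conclude $A_x^2 \neq \{0\}$. Since $x$ was arbitrary, this holds for every $x \in \Omega$.

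The only subtlety worth flagging is a matter of bookkeeping about which module category is in play. Proposition \ref{A_t-unif-proj} delivers left projectivity of each $A_x$ when $\A$ is left projective, and right projectivity when $\A$ is right projective; Lemma \ref{A^2neq0} accepts either case, so the two branches of the hypothesis ($\A$-$\mathrm{mod}$ or $\mathrm{mod}$-$\A$) line up correctly and no separate work is needed. I expect essentially no genuine obstacle here: the content has already been extracted into Proposition \ref{A_t-unif-proj} (which itself carries the locally trivial structure, the sections from Lemma \ref{a-in-Theta}, and the splitting morphism), so this lemma is a clean corollary. The main thing to get right is simply to cite the two prior results in the correct order and to note explicitly that passing from the uniform projectivity of the family to the ordinary projectivity of one member $A_x$ is all that Lemma \ref{A^2neq0} requires.
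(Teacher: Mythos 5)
Your proposal is correct and follows exactly the paper's own argument: the paper proves this lemma by the same two-step citation of Proposition \ref{A_t-unif-proj} (to get projectivity of each fibre $A_x$) followed by Lemma \ref{A^2neq0} (to conclude $A_x^2 \neq \{0\}$). Your additional remarks on the left/right bookkeeping are accurate and merely make explicit what the paper leaves implicit.
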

\begin{proof} It follows from Lemma \ref{A^2neq0} and Proposition \ref{A_t-unif-proj}. 
 \end{proof}

\begin{example} {\rm Let $\Omega$ be $\NN$ with the discrete topology. 
Consider  a continuous field of Banach algebras  $\mathcal{U}=\{{\NN}, A_{t}, \prod_{t \in \NN} A_t\}$ where 
$A_{t}$ is  the Banach algebra  $\ell^{2}_{t}$ of $t$-tuples of complex numbers $x =(x_{1},\ldots, x_{t})$ with pointwise multiplication and the norm $\|x\|=(\sum^{t}_{i=1}|x_{i}|^{2})^{\frac{1}{2}}$. Let $\A$ be defined by $\mathcal{U}$.

It is easy to see that, for each $t \in \NN$, the algebra $A_{t}$ is biprojective. For example, define
\begin{align*}
\tilde{\rho}_{t}:\ell^{2}_{t} & \longrightarrow \ell^{2}_{t}\widehat{\otimes} \ell^{2}_{t}
\\
(x_{1}\ldots x_{t}) & \longmapsto \sum_{k=1}^{t} x_{k} e^{k}\otimes e^{k},
\end{align*} 
where  $e^{k}= ( 0, \dots, 1, 0, \dots, 0) \in \ell^{2}_{t}$ with $1$ in the  $k$-th place. One can check that $\tilde{\rho}_{t}$ is a morphism of Banach 
$\ell^{2}_{t}$-bimodules such that $\pi_{A_t}\circ \tilde{\rho}_{t}=\id_{\ell^{2}_{t}}$.

Note that the algebra $A_{t}$ has the identity $e_{\ell^{2}_{t}} = ( 1, \dots, 1)$ and $\|e_{\ell^{2}_{t}} \|=\sqrt{t}$. Thus $\sup_{t \in \NN}\|e_{\ell^{2}_{t}} \| =\infty$.

{\it We will show that the  algebra $\A$ is not left projective.} In view of Proposition \ref{A_t-unif-proj} it is enough to show that the Banach algebras $A_{t}$, $t \in \NN$, are not uniformly left projective. 

We shall estimate $\| \rho_{t} \|$, $t \in \NN$, from below, where $\rho_{t}$ is  an arbitrary   morphism $\rho_{t}:\ell^{2}_{t} \longrightarrow \ell^{2}_{t}\widehat{\otimes} \ell^{2}_{t}$ of Banach left
$\ell^{2}_{t}$-modules which provides left projectivity of $\ell^{2}_{t}$.
For every element $e^{k} \in \ell^{2}_{t}$, 
\[ 
\rho_{t}(e^{k})=  e^{k} \rho_{t}(e^{k}) = e^{k} \otimes z^k
\]
for some $z^{k} \in \ell^{2}_{t}$ such that  $e^{k}  z^k = e^{k}$,
since $\pi_{\ell^{2}_{t}}\circ \rho_{t}=\id_{\ell^{2}_{t}}$.

We consider the bounded bilinear functional $\V : \ell^{2}_{t} \times \ell^{2}_{t} \longrightarrow  \CC $ defined by $\V(x, y) = \sum_{k=1}^{t} x_{k} y_{k}$, $x, y\in \ell^{2}_{t}$. Then, by the  universality property of the projective tensor product, the equation $V(x \otimes y) =\V(x, y)$ uniquely defines a continuous linear functional $V : \ell^{2}_{t} \widehat{\otimes} \ell^{2}_{t} \longrightarrow  \CC$ such that $\| V \|= \| \V \|$ =1.
Note that  the value of $V$ on the element 
\[
\rho_{t}\left(\sum_{k=1}^{t} \frac{1}{k} e^{k}\right) =\sum_{k=1}^{t} \frac{1}{k} e^{k} \otimes z^k
\]
is 
\[ V \left( \rho_{t}\left(\sum_{k=1}^{t} \frac{1}{k} e^{k}\right) \right) = \sum_{k=1}^{t} \frac{1}{k} > \log(t).\]
Thus, for each  $t \in \NN$, we have
\[
\log(t) < \left| V \left( \rho_{t}\left(\sum_{k=1}^{t} \frac{1}{k} e^{k} \right)\right) \right| \le \| V \|\| \rho_{t} \|\| \sum_{k=1}^{t} \frac{1}{k} e^{k}\|_{\ell^{2}_{t}}= \| \rho_{t} \| \left(\sum_{k=1}^{t} \frac{1}{k^2} \right)^{1/2}.
\]
Hence
\[
\| \rho_{t} \| > \frac{\log(t)}{ \left(\sum_{k=1}^{t} \frac{1}{k^2} \right)^{1/2}} \to \infty
\]
as $t \to \infty$. Therefore the Banach algebras $A_{t}$, $t \in \NN$, are not uniformly left projective and $\A$ is not left projective.

By Proposition \ref{dg-U-A_x-H^2},  there exists a Banach $\A$-bimodule $X$ such that $\H^2(\A,X) \neq \{0 \}$, and there exists a strongly unsplittable singular extension of the Banach algebra $\A$.
}
\end{example}

\section{Topological properties of $\Omega$ for  left projective Banach algebras of continuous fields over $\Omega$} \label{A-proj-Omega-paracompact}

In \cite[Theorem 4]{He3} Helemskii proved  that a closed ideal of a 
 commutative $C^*$-algebra $\A$
is projective in $\A$-$\mathrm{mod}$~ if and only if its spectrum is paracompact.
In this section we consider a $\sigma$-locally trivial  continuous field of (not necessarily commmutative) Banach algebras $\;\mathcal{U}=\{ \Omega,A_{x},\Theta \}$
and prove that the left projectivity of the Banach algebra $\A$ defined by $\mathcal{U}$ implies paracompactness of $\Omega$.

\begin{definition} A  Hausdorff topological space $\Omega$ is said to be {\em paracompact} if every open cover of $\Omega$ has an open locally finite refinement that is also a cover of $\Omega$. 
\end{definition}

\begin{proposition}\label{F(s,t)funct} Let $\Omega$ be a Hausdorff locally compact  space, let  
$\mathcal{U} = \{\Omega,A_t,\Theta\}$
 be a locally trivial continuous field of Banach algebras, and let the Banach algebra $\A$ be defined by $\mathcal{U}$. For each element  $v \in  \A \widehat{\otimes} \A$, the function 
\begin{align*}
F_v:\Omega \times\Omega &\longrightarrow \RR
\\
(s,t) & \longmapsto \|(\tau_{s}\otimes\tau_{t})(v)\|_{\it A_s\widehat{\otimes} A_t}
\end{align*}
satisfies the following conditions:
\begin{enumerate}
\item[{\rm (i)}] 
$F_v$ is  a positive continuous function on $\Omega \times\Omega$; 
\item[{\rm (ii)}] 
$F_v(s,t) \to 0$  as $ t \to \infty$ uniformly for $ s \in \Omega$,
\item[{\rm (iii)}] 
$F_v(s,t) \to 0$ as $ s \to \infty$ uniformly for $ t \in \Omega$,
\item[{\rm (iv)}]  If there is a morphism of left  Banach  $\A$-modules
 $\rho: \A \rightarrow \A_+ \widehat{\otimes} \A$ 
such that $\pi_{\A} \circ \rho = {\rm id}_{\A} $, then, for $a \in \overline{\A^2}$,
$F_{\rho(a)}(s,s) \ge \|a (s)\|$ for every $s \in \Omega$.
\end{enumerate}
\end{proposition}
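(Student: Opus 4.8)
The plan is to dispatch the four assertions in turn, the unifying tool being the uniform contractivity of the fibre maps together with local triviality, which lets me replace norms computed in the varying fibre $A_s\widehat{\otimes}A_t$ by norms of genuinely vector-valued functions into a fixed Banach space. For (i), positivity is immediate since $F_v$ is a norm. The key preliminary observation is the uniform estimate: because $\|\tau_s\|\le 1$ and $\|\tau_t\|\le 1$ we have $\|\tau_s\otimes\tau_t\|\le 1$, whence for all $v,w\in\A\widehat{\otimes}\A$ and all $(s,t)\in\Omega\times\Omega$,
\[
|F_v(s,t)-F_w(s,t)|\le\|(\tau_s\otimes\tau_t)(v-w)\|_{A_s\widehat{\otimes}A_t}\le\|v-w\|.
\]
Thus $v\mapsto F_v$ is $1$-Lipschitz into the bounded functions on $\Omega\times\Omega$ with the supremum norm, and since a uniform limit of continuous functions is continuous, it suffices to prove continuity of $F_w$ for $w$ in the dense set of finite sums $w=\sum_{i=1}^n a_i\otimes b_i$ with $a_i,b_i\in\A$.

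\textbf{Continuity of $F_w$ for finite sums.} Fix $(s_0,t_0)$ and use local triviality to choose neighbourhoods $U_{s_0}$ and $U_{t_0}$ together with families of isometric isomorphisms $\phi_s\colon A_s\to A$ (for $s\in U_{s_0}$) and $\psi_t\colon A_t\to B$ (for $t\in U_{t_0}$) onto the constant fibres $A=A_{s_0}$ and $B=A_{t_0}$. By the defining property of a trivial field the maps $s\mapsto\alpha_i(s):=\phi_s(a_i(s))$ and $t\mapsto\beta_i(t):=\psi_t(b_i(t))$ are continuous into the \emph{fixed} spaces $A$ and $B$. Since $\phi_s\otimes\psi_t$ is an isometric isomorphism of $A_s\widehat{\otimes}A_t$ onto $A\widehat{\otimes}B$, we get $F_w(s,t)=\|\sum_i\alpha_i(s)\otimes\beta_i(t)\|_{A\widehat{\otimes}B}$, and $(s,t)\mapsto\sum_i\alpha_i(s)\otimes\beta_i(t)$ is continuous into the fixed Banach space $A\widehat{\otimes}B$ because the canonical bilinear map is jointly continuous; composing with the norm gives continuity of $F_w$ near $(s_0,t_0)$. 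This localisation — turning the norm of a sum in the varying fibre into the norm of a norm-continuous $A\widehat{\otimes}B$-valued function — is where local triviality is indispensable and is the \textbf{main obstacle}; axiom (ii) alone only controls the elementary-tensor case $F_{a\otimes b}(s,t)=\|a(s)\|\,\|b(t)\|$.

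\textbf{Decay at infinity.} For a finite sum $w=\sum_i a_i\otimes b_i$, the triangle inequality and $\|x\otimes y\|=\|x\|\,\|y\|$ give $F_w(s,t)\le\sum_i\|a_i\|\,\|b_i(t)\|$, a bound independent of $s$. Since each $b_i\in\A$ vanishes at infinity, given $\varepsilon>0$ there is a compact $K\subset\Omega$ with $F_w(s,t)<\varepsilon$ for every $s\in\Omega$ and every $t\notin K$; this is exactly (ii) for $w$. For arbitrary $v$, approximate by such a $w$ with $\|v-w\|$ small and invoke the uniform estimate from the first paragraph to transfer the conclusion to $v$. Assertion (iii) follows by the symmetric argument, bounding instead by $\sum_i\|a_i(s)\|\,\|b_i\|$.

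\textbf{The diagonal bound (iv).} Since $\rho$ is a morphism of left $\A$-modules, for $c,d\in\A$ we have $\rho(cd)=c\cdot\rho(d)$, and writing $\rho(d)=\sum_j u_j\otimes b_j$ with $u_j\in\A_+$ gives $c\cdot\rho(d)=\sum_j(cu_j)\otimes b_j\in\A\widehat{\otimes}\A$ because $cu_j\in\A$. By continuity of $\rho$ it follows that $\rho(\overline{\A^2})\subseteq\A\widehat{\otimes}\A$, so that for $a\in\overline{\A^2}$ the quantity $F_{\rho(a)}(s,s)=\|(\tau_s\otimes\tau_s)(\rho(a))\|_{A_s\widehat{\otimes}A_s}$ is well defined. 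Now observe the commuting identity $\pi_{A_s}\circ(\tau_s\otimes\tau_s)=\tau_s\circ\pi_\A$ on $\A\widehat{\otimes}\A$ (both sides send $a\otimes b$ to $a(s)b(s)$). Combining this with $\pi_\A\circ\rho=\id_\A$ and the contractivity $\|\pi_{A_s}\|\le 1$ gives, for every $s\in\Omega$,
\[
\|a(s)\|=\|\tau_s(\pi_\A(\rho(a)))\|=\|\pi_{A_s}((\tau_s\otimes\tau_s)(\rho(a)))\|\le\|(\tau_s\otimes\tau_s)(\rho(a))\|=F_{\rho(a)}(s,s),
\]
which is the required inequality and completes the proof.
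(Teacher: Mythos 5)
Your proof is correct and follows essentially the same route as the paper's: local triviality is used in exactly the same way to transfer the computation into a fixed projective tensor product where joint continuity of the canonical bilinear map applies, and part (iv) is the same commuting-square argument $\pi_{A_s}\circ(\tau_s\otimes\tau_s)=\tau_s\circ\pi_{\A}$ combined with contractivity of $\pi_{A_s}$. Your packaging of the reduction step as ``$v\mapsto F_v$ is $1$-Lipschitz into the sup-normed bounded functions, so it suffices to treat the dense set of finite sums of elementary tensors'' is a cleaner formulation of what the paper does by hand with the tail of a Grothendieck representation $v=\sum_i\lambda_i\,a_i\otimes b_i$, but it is the same underlying estimate.
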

\begin{proof} 
By \cite[Theorem 3.6.4]{Sh}, every element $v \in \A \widehat{\otimes} \A $ can be written as 
\[v= \sum_{i=1}^{\infty} \lambda_i~ a_i \otimes b_i,
\]
where  $\lambda_i \in {\mathbb C}, \; a_i , b_i \in \A,\;$ 
 $ \sum_{i=1}^{\infty} |\lambda_i| < \infty$ and the sequences
$\{a_i\}, \{b_i\}$ converge to zero  in $\A$ as $i \to \infty$ and so the sequences $\{a_i\}, \{b_i\}$ are bounded.
Therefore, we have
\begin{equation} \label{F_v=sum}
F_v(s,t)~=\|(\tau_{s}\otimes\tau_{t})(v)\|_{\it A_s\widehat{\otimes} A_t}
 = \| \sum_{i=1}^{\infty} \lambda_i~ {a_i}(s) 
\otimes b_i(t)\|_{A_s\widehat{\otimes} A_t}.
\end{equation}

(i) By assumption, $\mathcal{U}$ is locally trivial and so, for each $(s_0,t_0) \in \Omega \times\Omega$, there is an open neighbourhood $U_{s_0} \times V_{t_0}$ of $(s_0,t_0)$ such that $\mathcal{U}|_{U_{s_0}}$ and $\mathcal{U}|_{V_{t_0}}$ are trivial. 
 Let $\phi = (\phi_s)_{s \in U_{s_0}}$ and $\psi = (\psi_t)_{t \in V_{t_0}}$ be  isomorphisms of $\mathcal{U}|_{U_{s_0}}$  and $\mathcal{U}|_{V_{t_0}}$ respectively onto the trivial continuous fields of Banach algebras over $U_{s_0}$ and $V_{t_0}$  where $\phi_s$ and $\psi_t$ are isometric isomorphisms of Banach algebras $A_{s} \cong \tilde{A}_{s_0}$, $s \in U_{s_0}$, and $A_{t} \cong \tilde{A}_{t_0}$, $t \in V_{t_0}$, respectively.

Then, for every $\varepsilon >0$, there exists $N \in \NN$ such that 
$ \sum_{i=N+1}^{\infty} |\lambda_i|~ \|a_i\|  \|b_i\| < \varepsilon/4$ and there is an open neighbourhood $B_{s_0} \times D_{t_0} \subset U_{s_0} \times V_{t_0}$ of $(s_0,t_0)$ such that for all $(s,t) \in B_{s_0} \times D_{t_0}$
\begin{align*}
~& \|(\phi_{s_0} \otimes \psi_{t_0} )(\tau_{s_0}\otimes \tau_{t_0})(v) -(\phi_{s} \otimes \psi_{t} )(\tau_{s}\otimes\tau_{t})(v) \|_{\tilde{A}_{s_0}\widehat{\otimes} \tilde{A}_{t_0}}
\\
\qquad & =  \|\sum_{i=1}^{\infty} \lambda_i~ \left(\phi(\bar{a_i})(s_0) \otimes \psi(\bar{b_i})(t_0) - \phi(\bar{a_i})(s) \otimes \psi(\bar{b_i})(t) \right)\|_{\tilde{A}_{s_0}\widehat{\otimes} \tilde{A}_{t_0}}
\end{align*}
\begin{align*}
~\qquad \qquad & \le 2 \sum_{i=N+1}^{\infty} |\lambda_i|~ \|a_i\|  \|b_i\| 
\\
& \;\;\;\;\;+
\|\sum_{i=1}^{N} \lambda_i~ \left(\phi(\bar{a_i})(s_0) \otimes \psi(\bar{b_i})(t_0) - \phi(\bar{a_i})(s_0) \otimes \psi(\bar{b_i})(t)\right)\|_{\tilde{A}_{s_0}\widehat{\otimes} \tilde{A}_{t_0}}
\\
& \;\;\;\;\;+
\|\sum_{i=1}^{N} \lambda_i~ \left(\phi(\bar{a_i})(s_0) \otimes \psi(\bar{b_i})(t) -\phi(\bar{a_i})(s) \otimes \psi(\bar{b_i})(t)\right)\|_{\tilde{A}_{s_0}\widehat{\otimes} \tilde{A}_{t_0}}
\end{align*}
\begin{align*}
 & \le 2 \sum_{i=N+1}^{\infty} |\lambda_i|~ \|a_i\|  \|b_i\| 
\\
& \;\;\;\;\;+
\sum_{i=1}^{N}  |\lambda_i|~ \|a_i\| 
\| \psi(\bar{b_i})(t_0) - \psi(\bar{b_i})(t)\|_{\tilde{A}_{t_0}}
\\
& \;\;\;\;\;+
\sum_{i=1}^{N} |\lambda_i|~  \|b_i\|  
\|\phi(\bar{a_i})(s_0) -\phi(\bar{a_i})(s) \|_{\tilde{A}_{s_0}} < \varepsilon,\qquad \qquad \quad 
\end{align*}
where $\bar{a_k} = a_k|_{U_{s_0}}$ and $\bar{b_k} = b_k|_{V_{t_0}}$ for $k=1,2,\dots$.
Hence
\[
\left| \|(\phi_{s_0} \otimes \psi_{t_0} )(\tau_{s_0}\otimes \tau_{t_0})(v)\|_{\tilde{A}_{s_0}\widehat{\otimes} \tilde{A}_{t_0}} -\|(\phi_{s} \otimes \psi_{t} )(\tau_{s}\otimes\tau_{t})(v) \|_{\tilde{A}_{s_0}\widehat{\otimes} \tilde{A}_{t_0}} \right|< \varepsilon.
\]
Since, for each $x \in \Omega$, $\phi_x$ and $\psi_x$ are isometric isomorphisms of Banach algebras we have
\[
\left| \|(\tau_{s_0}\otimes \tau_{t_0})(v)\|_{\it A_{s_0}\widehat{\otimes} A_{t_o}} -\|(\tau_{s}\otimes\tau_{t})(v) \|_{\it A_s\widehat{\otimes} A_t} \right|< \varepsilon.
\]
Therefore $F_v$ is a positive continuous function on $\Omega \times\Omega$.

(ii) and (iii).~ By (\ref{F_v=sum}), we have
\begin{equation} \label{F_v-to-0-1}
F_v(s,t)~= \| \sum_{i=1}^{\infty} \lambda_i~ {a_i}(s) \otimes b_i(t)\|_{\it A_s\widehat{\otimes} A_t}
~ \le \sum_{i=1}^{\infty} |\lambda_i|~ \|a_i(s)\|_{\it A_s}  \|b_i(t)\|_{\it A_t} .
\end{equation}
Recall that $ \sum_{i=1}^{\infty} |\lambda_i| < \infty$ and the sequences
$\{a_i\}, \{b_i\}$ converge to zero  in $A$ as $i \to \infty$ and so the sequences $\{a_i\}, \{b_i\}$ are bounded.
Hence for every  $\varepsilon >0$, there exists $N \in \NN$ such that 
\[ \sum_{i=N+1}^{\infty} |\lambda_i|~ \|a_i\|  \|b_i\| < \varepsilon/2
\]
 and a compact subset $K$ of $\Omega$ such that, for all $t \in \Omega \setminus K$,
\begin{equation} \label{F_v-to-0-2}
\sup_{s \in \Omega} F_v(s,t)~ = \sup_{s \in \Omega} \left(\| \sum_{i=1}^{N}  |\lambda_i|~ \|a_i\|  \|b_i(t)\|_{\it A_t} \right) +\varepsilon/2 < \varepsilon.
\end{equation}
Therefore $F_v(s,t) \to 0$  as $ t \to \infty$ uniformly for $ s \in \Omega$.
Similar one can show that
$F_v(s,t) \to 0$ as $ s \to \infty$ uniformly for $ t \in \Omega$.

(iv). Suppose there is a morphism of left  Banach  $\A$-modules
 $\rho: \A \rightarrow \A_+ \widehat{\otimes} \A$ 
such that $\pi_{\A} \circ \rho = {\rm id}_{\A} $. Then, for $a \in \overline{\A^2}$, $\rho (a) \in \A \widehat{\otimes} \A$. Note that, for every $s \in \Omega$,
\[\pi_{\it A_s}((\tau_{s}\otimes\tau_{s})\rho(a)) = a(s).\]
Thus, for every $s \in \Omega$,
\begin{align*}
  F_{\rho(a)}(s,s) ~&= \|(\tau_{s}\otimes \tau_{s})\rho(a)\|_{\it A_s\widehat{\otimes} A_s}  \\
~& \ge \| \pi_{\it A_s}((\tau_{s}\otimes\tau_{s})\rho(a))\| = \|a (s)\|.
\end{align*}
 \end{proof}

We will need the following notions.
Let $\Omega$ be a Hausdorff locally compact  space, and let 
$\mathcal{U} = \{\Omega,A_t,\Theta\}$ be a  continuous field of Banach algebras. For any condition $\Gamma$, we say that {\em $\mathcal{U}$  locally satisfies  condition $\Gamma$} if, for every $t \in \Omega$, there exists an open neighbourhood $V$ of $t$ such that $\mathcal{U}|_{V}$ satisfies  condition $\Gamma$.

By \cite[Theorem 5.17]{Ke},  a Hausdorff locally compact  space is regular. Therefore if $\mathcal{U}$ locally satisfies a condition $\Gamma$
on $\Omega$ then there is an open cover $\{ U_\mu \}$, $\mu \in \M$, of $\Omega$ such that each $\mathcal{U}|_{U_\mu}$ satisfies the condition $\Gamma$ and, in addition,
there is an open cover $\{ V_\nu\}$ of $\Omega$ such that $ \overline{V_\nu} \subset U_{\mu(\nu)}$  for each $\nu$ and some $\mu(\nu) \in \M$.

\begin{definition}\label{sigma-triv} We say that {\em $\mathcal{U}$ $\sigma$-locally ($n$-locally) satisfies a condition $\Gamma$} if there is an open cover $\{ U_\mu \}$, $\mu \in \M$, of $\Omega$ such that each $\mathcal{U}|_{U_\mu}$ satisfies the condition $\Gamma$ and, in addition,
there is a countable (cardinality $n$, respectively) open cover $\{ V_j\}$ of $\Omega$ such that $ \overline{V_j} \subset U_{\mu(j)}$  for each $j$ and some $\mu(j) \in \M$.
\end{definition}

\begin{remark} {\rm Let $\Omega$ be a Hausdorff locally compact space, and let $\mathcal{U} = \{\Omega,A_t,\Theta\}$  be a  continuous field of Banach algebras which  locally satisfies a condition $\Gamma$.
Suppose $\Omega$ is $\sigma$-compact (compact) and $\Omega_0$ is an open subset of $ \Omega$. Then $\mathcal{U}|_{U_{\Omega_0}}$ $\sigma$-locally ($n$-locally for some $n \in \NN$, respectively) satisfies the condition $\Gamma$.}
\end{remark}

\begin{definition} Let  $\Omega$ be a disjoint union of a family of open subsets $\{ W_\mu \}$, $\mu \in \M$, of $\Omega$. We
 say that {\em $\mathcal{U} = \{\Omega,A_t,\Theta\}$ is a disjoint union of $\mathcal{U}|_{W_{\mu}}$}, $\mu \in \M$.
\end{definition}

\begin{remark}\label{parac-disjoint-union} {\rm Let $\Omega$ be a paracompact Hausdorff locally compact space, and let 
$\mathcal{U} = \{\Omega,A_t,\Theta\}$
 be a  continuous field of Banach algebras which  locally satisfies a condition $\Gamma$. By \cite[Theorem 5.1.27 and Problem 3.8.C(b)]{En}, the space $\Omega$ is the disjoint union of open-closed $\sigma$-compacts $G_{\mu}$, $\mu \in \M$, of $\Omega$.
Suppose  $\Omega_0$ is an open subset of $ \Omega$. Then $\mathcal{U}|_{\Omega_0}$ is a disjoint union of $\mathcal{U}|_{G_{\mu}\cap \Omega_0}$, $\mu \in \M$,
$\sigma$-locally  satisfying the condition $\Gamma$. }
\end{remark}

\begin{theorem}\label{Omega-paracompact} Let $\Omega$ be a Hausdorff locally compact space, let $\mathcal{U} = \{\Omega,A_t,\Theta\}$ be a disjoint union of $\sigma$-locally  trivial continuous fields $\mathcal{U}|_{W_{\mu}}$, $\mu \in \M$,   of Banach algebras, and let the Banach algebra $\A$ be defined by $\mathcal{U}$. Suppose that $\A$ is left or right projective. Then  $\Omega$ is paracompact.
\end{theorem}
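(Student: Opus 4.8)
The plan is to reduce to a single clopen summand and there manufacture, out of the splitting $\rho$, a locally finite open cover by relatively compact sets, since such a cover forces paracompactness of a locally compact Hausdorff space.

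First I would reduce to one piece. Since $\Omega=\bigsqcup_{\mu}W_\mu$ with each $W_\mu$ open and closed, and a topological sum of paracompact Hausdorff spaces is paracompact \cite{En}, it suffices to prove that each $W_\mu$ is paracompact. Because $W_\mu$ is clopen, the algebra $\A_\mu$ defined by $\mathcal{U}|_{W_\mu}$ is a direct summand of $\A$: writing $p\colon\A\to\A_\mu$ for restriction of sections to $W_\mu$ (a contractive algebra homomorphism) and $\iota\colon\A_\mu\to\A$ for extension by zero, one has $p\circ\iota=\mathrm{id}_{\A_\mu}$. If $\rho\colon\A\to\A_+\widehat{\otimes}\A$ splits $\pi_\A$ as a left module map, then $\rho_\mu:=(p_+\otimes p)\circ\rho\circ\iota$ is a left $\A_\mu$-module map with $\pi_{\A_\mu}\circ\rho_\mu=\mathrm{id}_{\A_\mu}$, so $\A_\mu$ is again left projective. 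Moreover $\mathcal{U}|_{W_\mu}$ is locally trivial (this is part of $\sigma$-local triviality), so Proposition \ref{F(s,t)funct} applies over $W_\mu$. Hence I may assume $\Omega=W_\mu$ is $\sigma$-locally trivial and $\A$ is left projective, and must show $\Omega$ is paracompact.

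The topological target is as follows: for a locally compact Hausdorff space it is enough to produce a locally finite open cover $\{R_\alpha\}$ by relatively compact sets. Indeed, form the graph on the index set with $\alpha\sim\beta$ whenever $R_\alpha\cap R_\beta\neq\emptyset$; local finiteness makes this graph locally finite, so its connected components are countable, and hence each union $\bigcup_{\alpha\in S}R_\alpha$ over a component $S$ is a countable union of relatively compact sets and therefore $\sigma$-compact. A short argument with local finiteness (an open set meeting $\overline{R_\alpha}$ meets $R_\alpha$) shows that these component-unions are clopen and pairwise disjoint, so $\Omega$ is the topological sum of clopen $\sigma$-compact, hence paracompact, pieces, and is itself paracompact \cite{En}. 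To build such a cover I would, for each point $x$, use Lemma \ref{a-in-Theta} and a trivializing chart around $x$ to choose a bump section $a_x\in\overline{\A^2}$ with $a_x(x)\neq0$, and set $R_x=\{t\in\Omega:F_{\rho(a_x)}(x,t)>\tfrac12\|a_x(x)\|\}$. By Proposition \ref{F(s,t)funct}(iv) one has $F_{\rho(a_x)}(x,x)\ge\|a_x(x)\|$, so $x\in R_x$ and $\{R_x\}$ covers $\Omega$; by (i) each $R_x$ is open; and by the uniform vanishing (ii) each $R_x$ lies in a compact set and so is relatively compact. The countable trivializing cover $\{V_j\}$ furnished by $\sigma$-local triviality lets me organize the bumps chart by chart and normalize them.

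The hard part is local finiteness of $\{R_x\}$. Relative compactness of each $R_x$ controls only the outgoing reach of a single bump; what must be excluded is that a fixed point $q$ lies in $R_x$ for a family of centres $x$ that cannot be thinned to a locally finite one, since the decay in (ii)--(iii) is internal to a single $F_{\rho(a_x)}$ and says nothing across different centres. Equivalently, one must rule out a non-$\sigma$-compact $\sigma$-component, i.e. an increasing $\omega_1$-tower of relatively compact open sets $G_\xi$ with $\overline{G_\xi}\subseteq G_{\xi+1}$ exhausting such a component; choosing $x_\xi\in G_{\xi+1}\setminus\overline{G_\xi}$, the relatively compact reach $R_{x_\xi}$ lies in some $G_{c(\xi)}$, and the contradiction must come from the single bounded operator $\rho$ being unable to follow the whole tower while satisfying the reproducing bound $F_{\rho(a_{x_\xi})}(x_\xi,x_\xi)\ge\|a_{x_\xi}(x_\xi)\|$ together with the uniform estimate $\|\rho(a_{x_\xi})\|\le\|\rho\|\,\|a_{x_\xi}\|$. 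Making this step precise---converting the uniform decay of the functions $F$ and the boundedness of $\rho$ into genuine local finiteness, or into the impossibility of such an $\omega_1$-tower---is where I expect the real work to lie; the right- and bi-projective cases then follow by the symmetric construction in $\mathrm{mod}$-$\A$ (and $\A$-$\mathrm{mod}$-$\A$).
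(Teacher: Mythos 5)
There is a genuine gap, and you have located it yourself: the local finiteness of the family $\{R_x\}$ is never established, and nothing in your construction can establish it. Each set $R_x=\{t:F_{\rho(a_x)}(x,t)>\tfrac12\|a_x(x)\|\}$ is built from a \emph{different} element $\rho(a_x)$ of $\A_+\widehat{\otimes}\A$, and Proposition \ref{F(s,t)funct} gives continuity and decay of $F_v(s,t)$ only for a \emph{fixed} $v$; it says nothing about how $F_{\rho(a_x)}(x,t)$ varies as the centre $x$ moves, so a fixed point $q$ may lie in $R_x$ for an uncontrollable family of centres. Your closing paragraph correctly identifies the $\omega_1$-tower scenario as what must be excluded, but the per-centre estimates $\|\rho(a_x)\|\le\|\rho\|\,\|a_x\|$ and $F_{\rho(a_x)}(x,x)\ge\|a_x(x)\|$ do not by themselves produce a contradiction: they are invariant under rescaling the bumps and carry no information linking distinct centres. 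The reduction to a clopen summand and the observation that a locally finite open cover by relatively compact sets forces paracompactness are both fine; the missing ingredient is precisely the cross-centre coherence.

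The paper supplies that coherence by making the centre a genuine continuous variable. Using the factorization $p=xy$ with $p(t)\neq 0$ on a trivializing chart (Lemma \ref{A_x^2neq0}) and the left-module property of $\rho$, it shows that $\Phi(s,t)=F_{\rho(f_s p)}(s,t)$ is independent of the cutoff $f_s$ (Lemma \ref{Phi-well-defined}) and hence \emph{jointly continuous in $(s,t)$} (Lemma \ref{Phi-continuous}, by writing $\Phi(s,t)=F_{\rho(f_{s_0}p)}(s,t)/f_{s_0}(s)$ near $s_0$); together with the uniform decay at infinity on compacta and the diagonal lower bound from Proposition \ref{F(s,t)funct}(iv), the symmetrized function $G(s,t)=\min\{E(s,t),1\}\min\{E(t,s),1\}$ satisfies exactly the hypotheses of Helemskii's topological criterion \cite[Theorem A.12, Appendix A]{He4}, which is the result that converts such a two-variable function into paracompactness --- i.e.\ it does for you the hard combinatorial step you left open. (The paper also first cuts $W_\mu$ into the countably many pieces $\overline{V_j}\cap W_\mu$ via Lemma \ref{V_j-and-W_mu} and \cite[Theorem 5.28]{Ke}, so that the chart-dependent field $p$ need only exist on one $U_{\alpha(j)}$ at a time.) To repair your argument you would either have to prove local finiteness of $\{R_x\}$ directly --- which in effect requires re-proving Helemskii's Theorem A.12 --- or replace your pointwise bumps by a single two-variable function with the joint continuity just described.
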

\begin{proof} By assumption, $\Omega $ is a  disjoint union of the family of open subsets $\{ W_\mu \}$, $\mu \in \M$, of $\Omega$. Let us prove that paracompactness of all $ W_\mu $, $\mu \in \M$, implies paracompactness of $\Omega$. Suppose, for every $\mu \in \M$, $ W_\mu $ is paracompact.  Let $\V=\{V_\alpha \}$
be an arbitrary open cover  of $\Omega$. For each $\mu \in \M$, the family
$V_\mu = \{ V \cap W_\mu: V \in \V \}$ is an open cover of $ W_\mu $. Since  $ W_\mu $ is paracompact, $V_\mu$ has an open locally finite refinement $\N_\mu$  that is also a cover of $ W_\mu $. Hence $\V$ has  an open locally finite 
refinement $\N = \cup_{\mu \in \M}\N_\mu$  of $\Omega$. Therefore  $\Omega$ is paracompact.

Let us choose $\mu \in \M$ and show that $ W_\mu $ is paracompact.
By assumption,  $\mathcal{U}|_{W_{\mu}}$ is a
$\sigma$-locally  trivial continuous field. By Definition \ref{sigma-triv}, there is an open cover $\{ U_\alpha \}$ of $W_\mu $ such that each $\mathcal{U}|_{U_\alpha}$ is locally trivial and, in addition,
there is a countable  open cover $\{ V_j\}$ of $W_\mu $ such that $ \overline{V_j} \subset U_{\alpha(j)}$  for each $j$.

\begin{lemma}\label{V_j-and-W_mu}
The paracompactness of all $ \overline{V_j}\cap W_\mu $, $j \in \NN$, implies paracompactness of $ W_\mu $.
\end{lemma}
\begin{proof}
 Let $\B$ be an arbitrary open cover  of $ W_\mu $. 
For each $j \in \NN$, the family
$\B_j = \{ B \cap \overline{V_j} \cap W_\mu: B \in \B \}$ is an open cover of $ \overline{V_j}\cap W_\mu $. By assumption,  $ \overline{V_j}\cap W_\mu $ is paracompact and so $\B_j$ has an open locally finite refinement $\D_j$  that is also a cover of  $ \overline{V_j}\cap W_\mu $.
The family of open subsets ~$\D'_j = \{ D \cap {V_j}: D \in \D_j\}$ is  locally finite in 
$ W_\mu $ and is a refinement  of $\B$. Furthermore, since $ W_\mu = \bigcup_{j \in \NN} V_j$, the family $\D = \bigcup_{j \in \NN}
\D'_j$ is an open $\sigma$-locally finite cover of $ W_\mu $.
By \cite[Theorem 5.28]{Ke},  $ W_\mu $ is paracompact.
 \end{proof}

Therefore to prove Theorem \ref{Omega-paracompact} it is enough to show that, for every  $\mu \in \M$ and $j \in \NN$, the topological space $ \overline{V_j} \cap W_\mu $ is paracompact. 

Since $\A$ is left projective, there exists a morphism of left  Banach  $\A$-modules
 $\rho: \A \rightarrow \A_+ \widehat{\otimes} \A$ 
such that $\pi_{\A } \circ \rho = {\rm id}_{\A} $.

Recall that $\mathcal{U}|_{U_\alpha(j)}$ is locally trivial and $ \overline{V_j} \subset U_{\alpha(j)}$. 
By Lemma \ref{A_x^2neq0}, there are continuous vector fields $x$ and $y$ on $ U_{\alpha(j)}$ such that $p(t)= x(t)y(t) \neq 0$
for every $t \in  U_{\alpha(j)}$. 

By \cite[Theorem 3.3.1]{En}, $\Omega$ is a Tychonoff space and so, for every $ s \in \overline{V_j} \subset U_{\alpha(j)}$, there is $f_s \in C_0(\Omega)$ such that $0 \le f_s \le 1$, $f_s(s)=1$ and $f_s(t) = 0$ for all $t \in \Omega \setminus U_{\alpha(j)}$. 
By Property (iv) of Definition \ref{cont-fields} and \cite[Proposition 10.1.9]{Di}, the field $f p$ is continuous and $\|f (t) p(t)\| \to 0$ as $t \to \infty$, we have  $f p \in \A$.
 For every $ s \in \overline{V_j} \subset U_{\alpha(j)}$ and $t \in \Omega$, we set
\[
\Phi(s,t) = F_{\rho(f_s p)}(s,t)
\]
where the function $F$ is defined in Proposition \ref{F(s,t)funct}. 

\begin{lemma}\label{Phi-well-defined}
The function $\Phi$ is well defined. 
\end{lemma}
\begin{proof}
Recall that $\rho$ is a morphism of left Banach $\A$-modules. Thus,
for every $ s \in \overline{V_j} \subset U_{\alpha(j)}$ and every $f_s, g_s \in C_0(\Omega)$ such that $0 \le f_s, g_s \le 1$, $f_s(s)= g_s(s)=1$ and $f_s(t) =g_s(t)= 0$ for all $t \in \Omega \setminus U_{\alpha(j)}$, we have, for $t \in \Omega$, 
\begin{align*}
  F_{\rho(f_s \; p)}(s,t) ~&= \|(\tau_{s}\otimes \tau_{t})\rho(f_s\; p)\|_{\it A_s\widehat{\otimes} A_t}  \\
 ~&= \|\sqrt{f_s(s)} x(s) (\tau_{s}\otimes \tau_{t})\rho(\sqrt{f_s} \;y)\|_{\it A_s\widehat{\otimes} A_t}\\
 ~&= \| (\tau_{s}\otimes \tau_{t})\rho(\sqrt{g_s} \sqrt{f_s} \;x y)\|_{\it A_s\widehat{\otimes} A_t}\\
 ~&= \|\sqrt{f_s(s)} x(s)  (\tau_{s}\otimes \tau_{t})\rho(\sqrt{g_s}\;y)\|_{\it A_s\widehat{\otimes} A_t}
\\
~&= \| (\tau_{s}\otimes \tau_{t})\rho(\sqrt{g_s} \; p)\|_{\it A_s\widehat{\otimes} A_t}\\
 ~&= \| (\tau_{s}\otimes \tau_{t})\rho(g_s\; p)\|_{\it A_s\widehat{\otimes} A_t}=  F_{\rho(g_s  p)}(s,t).
\end{align*}
Therefore $\Phi$ does not depend on the choice of $f_s$.
 \end{proof}

\begin{lemma}\label{Phi-continuous}
The function $\Phi$ is a continuous function on  $ \left(\overline{V_j}  \cap W_\mu \right) \times \Omega $. 
\end{lemma} 
\begin{proof}
Let $(s_0, t_0) \in 
\left(\overline{V_j}  \cap W_\mu\right) \times \Omega $
 and  $f_{s_0} \in C_0(\Omega)$ such that $0 \le f_{s_0} \le 1$, $f_{s_0}(s_0)=1$ and $f_{s_0}(t)= 0$ for all $t \in \Omega \setminus U_{\alpha(j)}$. Consider 
 a neighbourhood 
$V = U \times \Omega$ of the point  $(s_0, t_0)$ where
$ U = \{ s \in \overline{V_j}  \cap W_\mu : f_{s_0}(s) \neq 0 \}$.
Then, for $(s,t) \in V$,
\begin{align*}
 \Phi(s,t)~&= F_{\rho\left( \frac{f_{s_0}}{f_{s_0}(s)} \; p \right)}(s,t)\\
 ~&= \|(\tau_{s}\otimes \tau_{t})\rho \left(\frac{f_{s_0}}{f_{s_0}(s)}\; p \right)\|_{\it A_s\widehat{\otimes} A_t}  \\
 ~&= \frac{1}{f_{s_0}(s)}\| (\tau_{s}\otimes \tau_{t})\rho(f_{s_0} \;p)\|_{\it A_s\widehat{\otimes} A_t}\\
~&= \frac{1}{f_{s_0}(s)}\;F_{\rho(f_{s_0} \; p)}(s,t).
\end{align*}
Hence $\Phi$ is the ratio of two continuous functions on $V$, so it is continuous at $(s_0, t_0)$.
\end{proof}

\begin{lemma}\label{Phi-tends-to-0}
For every compact $K \subset \overline{V_j}  \cap W_\mu  $, the function $\Phi(s,t) \to 0$  as $ t \to \infty$ in $\Omega$ uniformly for $ s \in K$.
\end{lemma} 
\begin{proof}
By \cite[Theorem 3.1.7]{En}, since $\Omega $ is a Tychonoff space, for a compact subset $K \subset \left(\overline{V_j}  \cap W_\mu \right)\subset \Omega $ and for a closed subset $\Omega \setminus U_{\alpha(j)}\subset \Omega \setminus K$, there is $f_K \in C_0(\Omega)$ such that $0 \le f_K \le 1$, $f_K(s)=1$ for all $s \in K$ and $f_K(t) = 0$ for all $t \in \Omega \setminus U_{\alpha(j)}$. Note that $f_K p \in \A$.

By Proposition \ref{F(s,t)funct}, the function $F_{\rho\left( f_K \; p \right)}(s,t)\to 0$  as $ t \to \infty$ in $\Omega$ uniformly for $ s \in \Omega$.
Thus the function $\Phi(s,t) = F_{\rho(f_K p)}(s,t)$ on $K\times \Omega \subset \left(\overline{V_j}  \cap W_\mu \right) \times \Omega $
tends to $0$  as $ t \to \infty$ in $\Omega$ uniformly for $ s \in K$.
 \end{proof}

{\it Now let us complete the proof of Theorem} \ref{Omega-paracompact}.
For $(s,t) \in \left(\overline{V_j}  \cap W_\mu \right) \times \left(\overline{V_j}  \cap W_\mu \right)$, we set
\[
E(s,t) =\Phi(s,t)/\|p(s)\|.
\]
By Proposition \ref{F(s,t)funct}, $E(s,s) \ge 1$ for every  $ s \in \overline{V_j}  \cap W_\mu$. For $(s,t) \in \left(\overline{V_j}  \cap W_\mu \right) \times \left(\overline{V_j}  \cap W_\mu \right)$, we also set 
\[
G(s,t)= \min \{E(s,t);1 \} \min \{E(t,s);1 \}.
\]
By Lemmas \ref{Phi-well-defined}, \ref{Phi-continuous} and \ref{Phi-tends-to-0}, the function  $G(s,t)$ has the following properties: 
\begin{enumerate}
\item[{\rm (i)}] 
$G$ is  a positive continuous function on $\left(\overline{V_j}  \cap W_\mu \right) \times\left(\overline{V_j}  \cap W_\mu \right)$; 
\item[{\rm (ii)}] 
for any compact $K \subset \overline{V_j}  \cap W_\mu$,
$G(s,t) \to 0$  as $ t \to \infty$ uniformly for $ s \in K$ and
$G(s,t) \to 0$ as $ s \to \infty$ uniformly for $ t \in K$,
\item[{\rm (iii)}] $G(s,s)=1$. 
\end{enumerate}
By \cite[Theorem A.12, Appendix A]{He4}, $\overline{V_j}  \cap W_\mu$ is paracompact. By Lemma \ref{V_j-and-W_mu}, $\{ W_\mu \}$ is paracompact. Recall that $\Omega$  is a  disjoint union of the family of open subsets $\{ W_\mu \}$, $\mu \in \M$. Thus $\Omega$ is paracompact too.
 \end{proof}


\begin{theorem}\label{nonzero-field-Omega-paracompact} Let $\Omega$ be a Hausdorff locally compact  space, let 
$\mathcal{U} = \{\Omega,A_t,\Theta\}$
 be a locally  trivial continuous field  of Banach algebras, 
and let the Banach algebra $\A$ be defined by $\mathcal{U}$. Suppose that
 $\A$ is left or right projective and there is a continuous field $p \in \overline{\Theta^2}$ such that $p(t) \neq 0$ for all $t \in \Omega$ and $\sup_{s \in \Omega} \|p(s) \| < \infty$. Then  $\Omega$ is paracompact.
\end{theorem}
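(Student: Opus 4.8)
The plan is to follow the strategy behind the proof of Theorem \ref{Omega-paracompact}, but to exploit the globally defined section $p$ so as to build the auxiliary function $G$ directly on $\Omega \times \Omega$ and invoke the paracompactness criterion \cite[Theorem A.12, Appendix A]{He4} once, rather than first passing to the pieces $\overline{V_j} \cap W_\mu$. Throughout I assume that $\A$ is left projective (the right case being symmetric) and fix a morphism of left Banach $\A$-modules $\rho \colon \A \to \A_+ \widehat\otimes \A$ with $\pi_\A \circ \rho = \mathrm{id}_\A$.

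First I would record that, since $\Omega$ is locally compact Hausdorff, for each $s \in \Omega$ one may choose $f_s \in C_0(\Omega)$ of compact support with $0 \le f_s \le 1$ and $f_s(s) = 1$. Because $\sup_t \|p(t)\| < \infty$ and $f_s$ has compact support, the field $f_s p$ vanishes at infinity, hence lies in $\A$. The first genuine point to verify is that in fact $f_s p \in \overline{\A^2}$: writing $p = \lim_n \sum_k x_{n,k} y_{n,k}$ with $x_{n,k}, y_{n,k} \in \Theta$ and choosing $h \in C_0(\Omega)$ of compact support with $h \equiv 1$ on $\mathrm{supp}\, f_s$, one has $f_s\, x_{n,k} y_{n,k} = (f_s x_{n,k})(h y_{n,k})$, where both factors have compact support and so lie in $\A$; thus $f_s p$ is a limit of elements of $\A^2$. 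This allows me to define $\Phi(s,t) = F_{\rho(f_s p)}(s,t)$ via Proposition \ref{F(s,t)funct}, and, crucially, to apply part (iv) of that proposition to $a = f_s p$.

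Next I would establish the three properties of $\Phi$ on $\Omega \times \Omega$ that mirror Lemmas \ref{Phi-well-defined}, \ref{Phi-continuous} and \ref{Phi-tends-to-0}: that $\Phi(s,t)$ is independent of the choice of $f_s$, that $\Phi$ is continuous, and that for every compact $K \subset \Omega$ one has $\Phi(s,t) \to 0$ as $t \to \infty$ uniformly for $s \in K$. Independence of the cut-off follows, as in Lemma \ref{Phi-well-defined}, by moving scalar factors through the left-module map $\rho$ and using $\tau_s(\sqrt{g_s}\, x) = \sqrt{g_s(s)}\, x(s)$; the only change from Theorem \ref{Omega-paracompact} is that the single product $xy$ there is replaced by the defining sums $\sum_k x_{n,k} y_{n,k}$ of $p$, so that one checks the identity $(\tau_s \otimes \tau_t)\rho(f_s p) = (\tau_s \otimes \tau_t)\rho(\sqrt{f_s g_s}\, p)$, whose right-hand side is symmetric in $f_s$ and $g_s$. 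Continuity and the vanishing at infinity then follow from the well-definedness together with Proposition \ref{F(s,t)funct}(i),(ii),(iii), taking $f_{s_0}/f_{s_0}(s)$ near $s_0$ and $f_K \equiv 1$ on $K$ respectively.

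Finally I would set $E(s,t) = \Phi(s,t)/\|p(s)\|$ and $G(s,t) = \min\{E(s,t);1\}\min\{E(t,s);1\}$; here $\|p(\cdot)\|$ is continuous, strictly positive, and hence bounded below on compacta. Proposition \ref{F(s,t)funct}(iv), applied to $a = f_s p \in \overline{\A^2}$, gives $\Phi(s,s) = F_{\rho(f_s p)}(s,s) \ge \|(f_s p)(s)\| = \|p(s)\|$, so that $E(s,s) \ge 1$ and hence $G(s,s) = 1$. Combined with continuity and the uniform decay on compacta (for both variables, the second direction coming from $G \le \min\{E(t,s);1\}$ and Proposition \ref{F(s,t)funct}), the function $G$ satisfies conditions (i)--(iii) demanded by \cite[Theorem A.12, Appendix A]{He4}, which yields that $\Omega$ is paracompact. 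I expect the main obstacle to be the bookkeeping in the well-definedness step: confirming that the manipulations of the left-module map $\rho$ go through for a general $p \in \overline{\Theta^2}$ rather than for a single local product, including the verification that all the cut-off products remain inside $\A$ so that every application of the left-module identity and of Proposition \ref{F(s,t)funct}(iv) is legitimate.
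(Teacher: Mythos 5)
Your proposal is correct and follows essentially the same route as the paper's proof: the same auxiliary function $\Phi(s,t)=F_{\rho(f_s p)}(s,t)$, the same three verifications (independence of the cut-off via the left-module identity applied to the approximating sums for $\sqrt{f_s}\,p$, continuity, and uniform decay at infinity on compacta), and the same final appeal to the paracompactness criterion \cite[Theorem A.12, Appendix A]{He4}. The only cosmetic difference is that you divide by $\|p(s)\|$ to arrange $G(s,s)=1$, whereas the paper normalizes $\|p(s)\|\equiv 1$ at the outset; both are legitimate.
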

\begin{proof} We may assume that $\|p(s) \| =1$ for all $s \in \Omega$.
By \cite[Theorem 3.3.1]{En}, $\Omega$ is a Tychonoff space and so, for every $ s \in \Omega$, there is $f_s \in C_0(\Omega)$ such that $0 \le f_s \le 1$, $f_s(s)=1$. By \cite[Proposition 10.1.9 (ii)]{Di}, for every $f \in C_0(\Omega)$ such that $0 \le f_s \le 1$, the field $f p \in \overline{\Theta^2}$. Since $f(t) \to 0$ as $t \to \infty$, we have also $f p \in \overline{\A^2}$. For every $ s \in \Omega$ and $t \in \Omega$, we set
\[
\Phi(s,t) = F_{\rho(f_s p)}(s,t)
\]
where the function $F$ is defined in Proposition \ref{F(s,t)funct}.

\begin{lemma}\label{nonzero-field-Phi-well-defined}
The function $\Phi$ is well defined. 
\end{lemma}
\begin{proof}
Recall that $\rho$ is a morphism of left Banach $\A$-modules.
Since $\sqrt{f_s} p \in \overline{\A^2}$, it can be written as
\[
  \sqrt{f_s} p =\lim_\nu \sum_{k_\nu =1}^{n_\nu} x_{k_\nu} y_{k_\nu}.
\]
Thus,
for every $ s \in \Omega$ and every $f_s, g_s \in C_0(\Omega)$ such that $0 \le f_s, g_s \le 1$, $f_s(s)= g_s(s)=1$, we have, for $t \in \Omega$, 
\begin{align*}
  F_{\rho(f_s \; p)}(s,t) ~&= \|(\tau_{s}\otimes \tau_{t})\rho(f_s\; p)\|_{\it A_s\widehat{\otimes} A_t}  \\
 ~&= \|\lim_\nu \sum_{k_\nu =1}^{n_\nu}\sqrt{f_s(s)} x_{k_\nu}  (\tau_{s}\otimes \tau_{t})\rho(y_{k_\nu})\|_{\it A_s\widehat{\otimes} A_t}\\
~&= \|\lim_\nu \sum_{k_\nu =1}^{n_\nu} x_{k_\nu}  (\tau_{s}\otimes \tau_{t})\rho(y_{k_\nu})\|_{\it A_s\widehat{\otimes} A_t}\\
~&= \| (\tau_{s}\otimes \tau_{t})\rho(\sqrt{g_s}\sqrt{f_s} \; p)\|_{\it A_s\widehat{\otimes} A_t}\\
~&= \| (\tau_{s}\otimes \tau_{t})\rho(\sqrt{g_s} \; p)\|_{\it A_s\widehat{\otimes} A_t}\\
 ~&= \| (\tau_{s}\otimes \tau_{t})\rho(g_s\; p)\|_{\it A_s\widehat{\otimes} A_t}=  F_{\rho(g_s  p)}(s,t).
\end{align*}
Therefore $\Phi$ does not depend on the choice of $f_s$.
\end{proof}

\begin{lemma}\label{nonzero-field-Phi-continuous}
The function $\Phi$ is a continuous function on  $ \Omega \times \Omega $. 
\end{lemma} 
\begin{proof}
Let $(s_0, t_0) \in \Omega \times \Omega $
 and  $f_{s_0} \in C_0(\Omega)$ such that $0 \le f_{s_0} \le 1$ and $f_{s_0}(s_0)=1$.
Consider  a neighbourhood 
$V = U \times \Omega$ of the point  $(s_0, t_0)$ where
$ U = \{ s \in \Omega: f_{s_0}(s) \neq 0 \}$.
Then, as in Lemma \ref{Phi-continuous}, for $(s,t) \in V$,
\[
 \Phi(s,t)= \frac{1}{f_{s_0}(s)}\;F_{\rho(f_{s_0} \; p)}(s,t).
\]
Hence $\Phi$ is the ratio of two continuous functions on $V$, so it is continuous at $(s_0, t_0)$.
\end{proof}

\begin{lemma}\label{nonzero-field-Phi-tends-to-0}
For every compact $K \subset \Omega $, the function $\Phi(s,t) \to 0$  as $ t \to \infty$ in $\Omega$ uniformly for $ s \in K$.
\end{lemma} 
\begin{proof}
By \cite[Theorem 3.1.7]{En}, since $\Omega $ is a Tychonoff space, for a compact subset $K \subset  \Omega $, there is $f_K \in C_0(\Omega)$ such that $0 \le f_K \le 1$, $f_K(s)=1$ for all $s \in K$.  Note that $f_K p \in \A$.

By Proposition \ref{F(s,t)funct}, the function $F_{\rho\left( f_K \; p \right)}(s,t)\to 0$  as $ t \to \infty$ in $\Omega$ uniformly for $ s \in \Omega$.
Thus the function $\Phi(s,t) = F_{\rho(f_K p)}(s,t)$ on $K \times \Omega \subset \Omega \times \Omega $
tends to $0$  as $ t \to \infty$ in $\Omega$ uniformly for $ s \in K$.
\end{proof}

By Proposition \ref{F(s,t)funct}, $\Phi(s,s) \ge 1$ for every  $ s \in \Omega$. For $(s,t) \in \Omega \times \Omega$, we also set 
\[
G(s,t)= \min \{\Phi(s,t);1 \} \min \{\Phi(t,s);1 \}.
\]
By Lemmas \ref{nonzero-field-Phi-well-defined}, \ref{nonzero-field-Phi-continuous} and \ref{nonzero-field-Phi-tends-to-0}, the function  $G(s,t)$ has the following properties: 
\begin{enumerate}
\item[(i)] 
$G$ is  a positive continuous function on $\Omega \times \Omega$; 
\item[(ii)] 
for any compact $K \subset \Omega$,
$G(s,t) \to 0$  as $ t \to \infty$ uniformly for $ s \in K$ and
$G(s,t) \to 0$ as $ s \to \infty$ uniformly for $ t \in K$,
\item[(iii)] $G(s,s)=1$. 
\end{enumerate}
By \cite[Theorem A.12, Appendix A]{He4}, $\Omega$ is paracompact.
\end{proof}

\begin{proposition}\label{dg-U-Omega-H^2} Let $\Omega$ be a locally compact Hausdorff  space, and let 
$\mathcal{U} = \{\Omega,A_t,\Theta\}$ be either

{\rm (a)} a disjoint union of $\sigma$-locally  trivial continuous fields $\mathcal{U}|_{W_{\mu}}$, $\mu \in \M$,   of Banach algebras; or 

{\rm (b)}  a locally  trivial continuous field  of Banach algebras such that there is a continuous field $p \in \overline{\Theta^2}$ with $p(t) \neq 0$ for all $t \in \Omega$ and $\sup_{s \in \Omega} \|p(s) \| < \infty$.\\
Suppose $\Omega$ is not paracompact. Then, for the Banach algebra $\A$ defined by  $\mathcal{U}$,\\
{\rm (i)} there exists a Banach $\A$-bimodule $X$ such that 
$\H^2(\A,X) \neq \{0 \}$; and\\
{\rm (ii)} there exists a strongly unsplittable singular extension of the Banach algebra $\A$.
\end{proposition}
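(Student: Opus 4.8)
The plan is to derive both conclusions from the failure of left projectivity, exactly as was done in Proposition \ref{dg-U-A_x-H^2}. The statement of Proposition \ref{dg-U-Omega-H^2} is the cohomological analogue of the two paracompactness theorems just proved (Theorem \ref{Omega-paracompact} in case (a) and Theorem \ref{nonzero-field-Omega-paracompact} in case (b)), and the mechanism converting non-projectivity into nonvanishing cohomology is already isolated in the proof of Proposition \ref{dg-U-A_x-H^2}. So I expect this to be a short deduction rather than a fresh argument.

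First I would argue by contraposition on the topological hypothesis. In case (a), $\mathcal{U}$ is a disjoint union of $\sigma$-locally trivial continuous fields, so Theorem \ref{Omega-paracompact} applies: if $\A$ were left (or right) projective, then $\Omega$ would be paracompact. In case (b), $\mathcal{U}$ is locally trivial and carries a continuous field $p \in \overline{\Theta^2}$ with $p(t)\neq 0$ everywhere and $\sup_{s}\|p(s)\|<\infty$, so Theorem \ref{nonzero-field-Omega-paracompact} applies and again yields paracompactness of $\Omega$ from left (or right) projectivity. Since by hypothesis $\Omega$ is \emph{not} paracompact, in either case $\A$ can be neither left nor right projective. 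I would record this as the single substantive step: the combined force of the two preceding theorems.

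Having established that $\A$ is not left projective, both (i) and (ii) follow from the same homological dictionary used in Proposition \ref{dg-U-A_x-H^2}. For (i), I invoke \cite[Proposition IV.2.10(I)]{He4}: a Banach algebra $\A$ is left projective if and only if $\H^2(\A,X)=\{0\}$ for every right annihilator Banach $\A$-bimodule $X$. Because $\A$ is not left projective, this equivalence forces the existence of some right annihilator Banach $\A$-bimodule $X$ with $\H^2(\A,X)\neq\{0\}$, which is precisely conclusion (i). For (ii), I apply \cite{Jo2} (equivalently \cite[Theorem I.1.10]{He4}): for a Banach $\A$-bimodule $X$, one has $\H^2(\A,X)=\{0\}$ if and only if every singular extension of $\A$ by $X$ splits strongly. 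Applying this to the module $X$ just produced, the nonvanishing of $\H^2(\A,X)$ translates into the existence of a singular extension of $\A$ by $X$ that does not split strongly, giving conclusion (ii).

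The only point requiring care is matching the hypotheses of the two paracompactness theorems to the two cases of the present proposition, since those theorems are stated separately for the disjoint-union setting and for the locally trivial field with a nonvanishing square-field $p$; this is a bookkeeping matter rather than a genuine obstacle, as the hypotheses are transcribed verbatim. The substance of the argument lives entirely in Theorems \ref{Omega-paracompact} and \ref{nonzero-field-Omega-paracompact}, so I do not anticipate a hard step here: the remainder is a direct citation of the established equivalences between left projectivity, vanishing of second Hochschild cohomology against right annihilator bimodules, and strong splittability of singular extensions.
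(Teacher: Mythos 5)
Your proposal is correct and follows exactly the paper's own argument: contraposition via Theorems \ref{Omega-paracompact} and \ref{nonzero-field-Omega-paracompact} to conclude $\A$ is not left projective, then the equivalences from \cite[Proposition IV.2.10(I)]{He4} and \cite[Theorem I.1.10]{He4} to obtain (i) and (ii). No differences worth noting.
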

\begin{proof} By Theorem \ref{Omega-paracompact} in  case (a) and by Theorem \ref{nonzero-field-Omega-paracompact} in  case (b), $\A$ is not left (right)  projective. By  \cite[Proposition IV.2.10(I)]{He4}, a Banach algebra $\A$ is left projective if and only if $ {\H}^2(\A,X) = \{0 \}$ for any right annihilator Banach  $\A$-bimodule $X$. 
By \cite{Jo2} or \cite[Theorem I.1.10]{He4}, for a Banach $\A$-bimodule $X$, $ {\H}^2(\A,X) = \{0 \}$ if and only if all the singular extensions of $\A$ by  $X$ split strongly.
\end{proof}

\begin{corollary}\label{Ax-e-Omega-paracompact} Let  $\Omega$ be a locally compact Hausdorff  space,
 let $\mathcal{U} = \{\Omega,A_t,\Theta\}$ 
 be a locally trivial continuous field of Banach algebras such that every $\A_{t}$ has an identity $e_{A_{t}}$, $t \in \Omega$, and $\sup_{t\in\Omega}\|e_{A_{t}}\|\leq C$ for some constant $C \ge 1$, and let the Banach algebra $\A$ be defined by $\mathcal{U}$.  Suppose $\A$ is left projective. Then $\Omega$ is paracompact.
\end{corollary}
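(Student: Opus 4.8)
The plan is to reduce the corollary to Theorem~\ref{nonzero-field-Omega-paracompact} by exhibiting a bounded, nowhere-vanishing continuous field lying in $\overline{\Theta^2}$. The natural candidate is the \emph{field of identities} $p$ given by $p(t)=e_{A_t}$, $t\in\Omega$. Its norm is controlled directly by the hypothesis, $\sup_{t\in\Omega}\|p(t)\|=\sup_{t\in\Omega}\|e_{A_t}\|\le C<\infty$, and since each $A_t$ is a non-zero unital Banach algebra we have $\|e_{A_t}\|\ge 1$ (consistent with $C\ge 1$), so $p(t)=e_{A_t}\neq 0$ for every $t\in\Omega$. Thus, once I establish that $p$ is a continuous vector field lying in $\overline{\Theta^2}$, Theorem~\ref{nonzero-field-Omega-paracompact} applies and yields the paracompactness of $\Omega$.

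First I would verify that $p\in\Theta$, which is the one genuinely non-trivial point. Fix $t_0\in\Omega$; by local triviality there is an open neighbourhood $U$ of $t_0$ and an isomorphism $\phi=(\phi_x)_{x\in U}$ of $\mathcal{U}|_U$ onto a constant field over $U$ with fibre $\tilde{A}$, each $\phi_x$ being an isometric algebra isomorphism $A_x\to\tilde{A}$. Since any algebra isomorphism carries an identity to an identity, $\phi_x(e_{A_x})=e_{\tilde{A}}$ for all $x\in U$, that is, $p(x)=\phi_x^{-1}(e_{\tilde{A}})$ on $U$. The constant vector field $x\mapsto e_{\tilde{A}}$ is a continuous section of the constant field, and $\phi$ transforms continuous sections into continuous sections, so $p$ is continuous at every point of $U$ in the sense of Definition~\ref{cont-fields-on-Y}. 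As $t_0$ was arbitrary, $p$ is continuous at every point of $\Omega$, and Axiom~(iii) of Definition~\ref{cont-fields} gives $p\in\Theta$.

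It then remains to place $p$ in $\overline{\Theta^2}$, which is immediate: $\Theta$ is a subalgebra of $\prod_{t\in\Omega}A_t$, so $p\cdot p\in\Theta^2$, and pointwise $(p\cdot p)(t)=e_{A_t}e_{A_t}=e_{A_t}=p(t)$, whence $p=p\cdot p\in\Theta^2\subseteq\overline{\Theta^2}$. Collecting these facts, namely $p\in\overline{\Theta^2}$, $p(t)\neq 0$ for all $t\in\Omega$, and $\sup_{s\in\Omega}\|p(s)\|\le C<\infty$, together with the left projectivity of $\A$, I apply Theorem~\ref{nonzero-field-Omega-paracompact} to conclude that $\Omega$ is paracompact. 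The only step requiring care is the continuity of the identity field, where local triviality cannot be dropped, since it is what guarantees that the fibres over a chart are genuinely identified with a single algebra and that the identity is transported correctly; the remaining steps are purely formal.
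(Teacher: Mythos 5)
Your proof is correct and takes essentially the same route as the paper: both construct the identity field $p(t)=e_{A_t}$, use local triviality together with the gluing axiom of Definition~\ref{cont-fields} to conclude $p\in\Theta$, and then apply Theorem~\ref{nonzero-field-Omega-paracompact}. Your explicit check that $p=p\cdot p\in\Theta^2\subseteq\overline{\Theta^2}$ just makes visible a hypothesis of that theorem which the paper's proof leaves implicit.
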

\begin{proof} Consider a field $ p \in \prod_{t \in \Omega}$ such that $p(t) = e_{A_{t}}$. By assumption, $\mathcal{U} = \{\Omega,A_t,\Theta\}$ 
 is a locally trivial continuous field of Banach algebras.
Thus, for every  $t \in \Omega$, there exist a neighbourhood $U_t$ of $t$ and $p' \in \Theta$ such that $p(s) = p'(s)$ for each $s \in U_t$. By Property (iv) of Definition \ref{cont-fields}, the field $p$ is continuous and $p \in \Theta$. Since $\A$ is left projective, by Theorem \ref{nonzero-field-Omega-paracompact}, $\Omega$ is paracompact.
\end{proof}

For any index set $\Lambda$, we shall  mean by $N(\Lambda)$ the set of finite subsets of $\Lambda$ ordered by inclusion.

\begin{proposition}\label{Ax-e-A-projective} Let  $\Omega$ be a locally compact Hausdorff space, let 
$\mathcal{U} = \{\Omega,A_t,\Theta\}$ 
 be a locally trivial continuous field of Banach algebras such that every $\A_{t}$ has an identity $e_{A_{t}}$ and $\sup_{t \in\Omega}\|e_{A_{t}}\|\leq C$ for some constant $C$, and let the Banach algebra $\A$ be defined by $\mathcal{U}$. Suppose $\Omega$ is paracompact. Then $\A$ is left projective.
\end{proposition}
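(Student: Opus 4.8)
The plan is to exhibit an explicit right inverse of the canonical morphism $\pi_\A\colon \A_+\widehat\otimes\A\to\A$ in $\A$-$\mathrm{mod}$, i.e. a morphism of left Banach $\A$-modules $\rho\colon\A\to\A_+\widehat\otimes\A$ with $\pi_\A\circ\rho=\mathrm{id}_\A$. The basic tool is the \emph{identity field} $p$ defined by $p(t)=e_{A_t}$. Exactly as in the proof of Corollary \ref{Ax-e-Omega-paracompact}, local triviality together with Property (iv) of Definition \ref{cont-fields} shows that $p$ is a continuous vector field, $p\in\Theta$, and the hypothesis $\sup_t\|e_{A_t}\|\le C$ gives $\|p(t)\|\le C$ for all $t$. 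Thus $p$ is a bounded central multiplier of $\A$ with $a\,p=p\,a=a$ for every $a\in\A$, and $\iota\colon C_0(\Omega)\to\A$, $\iota(f)=f\,p$, is a central algebra homomorphism with $\|\iota\|\le C$ (here $(fp)(t)=f(t)e_{A_t}$, which lies in $\A$ since it vanishes at infinity). This is the device that lets us transport information from the base algebra $C_0(\Omega)$ to $\A$.

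Next I would bring in the base. Since $\Omega$ is paracompact and locally compact Hausdorff, the commutative $C^*$-algebra $C_0(\Omega)$ is left projective by Helemskii's theorem \cite[Theorem 4]{He3}; fix a bounded morphism of left $C_0(\Omega)$-modules $\rho_0\colon C_0(\Omega)\to C_0(\Omega)_+\widehat\otimes C_0(\Omega)$ with $\pi_{C_0(\Omega)}\circ\rho_0=\mathrm{id}$. Concretely one may take a locally finite open cover $\{U_\alpha\}$ of $\Omega$ by relatively compact trivializing sets (local triviality passes to open subsets, and paracompactness supplies the locally finite refinement), a subordinate partition of unity $\{\chi_\alpha\}$ with $\sum_\alpha\chi_\alpha=1$ and $\mathrm{supp}\,\chi_\alpha\subset U_\alpha$ compact, and bump functions $k_\alpha\in C_0(\Omega)$ with $0\le k_\alpha\le1$, compact support, and $k_\alpha\equiv1$ on $\mathrm{supp}\,\chi_\alpha$.

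I would then define $\rho(a)=\sum_\alpha(\chi_\alpha\,a)\otimes(k_\alpha\,p)$, where each $\chi_\alpha a\in\A$ and each $k_\alpha p\in\A$ are compactly supported continuous fields. The two formal properties are immediate and do not use paracompactness: since $\chi_\alpha k_\alpha=\chi_\alpha$, $a\,p=a$ and $\sum_\alpha\chi_\alpha=1$, one gets $\pi_\A(\rho(a))=\sum_\alpha\chi_\alpha k_\alpha\,a=\sum_\alpha\chi_\alpha\,a=a$; and because each $\chi_\alpha$ is scalar-valued, $\chi_\alpha(ba)=b(\chi_\alpha a)$, so $\rho(ba)=b\,\rho(a)$ and $\rho$ is a morphism in $\A$-$\mathrm{mod}$. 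Equivalently, and more invariantly, $\rho$ is obtained from $\rho_0$ by acting with the base kernel in the first tensor slot on $a$ and inserting $p$ through $\iota$ in the second slot; this is the form I would use to track norms.

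The main obstacle is the \emph{convergence and boundedness} of this series: the naive estimate $\|\rho(a)\|\le\sum_\alpha\|\chi_\alpha a\|\,\|k_\alpha p\|$ diverges in general (take disjoint bumps on which $\|a(\cdot)\|$ decays like a harmonic sequence), so nothing follows from the triangle inequality alone. The true projective-tensor norm of $\rho(a)$ is far smaller, for the same reason that the diagonal element $\sum_n e_n\otimes e_n$ has norm $1$ in $c_0\widehat\otimes c_0$ (a Hadamard/$\pm1$ cancellation): the supports of the $\chi_\alpha$ overlap only finitely often. To make this rigorous I would reduce, via the decomposition of the paracompact space $\Omega$ into a disjoint union of open-closed $\sigma$-compact pieces (\cite[Theorem 5.1.27]{En}, cf. Remark \ref{parac-disjoint-union}), to controlling one $\sigma$-compact piece at a time with a uniform constant, and on each piece I would use the trivializations $\phi^\alpha$ to rewrite $(\chi_\alpha a)\otimes(k_\alpha p)$ in the constant fibre $\tilde A_\alpha$ and bound the assembled element by the projective norm of the corresponding two-variable kernel over the base. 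That kernel is exactly the object whose boundedness is guaranteed by the left projectivity of $C_0(\Omega)$ (equivalently by the Helemskii function furnished by \cite[Theorem A.12]{He4}); since the $\phi^\alpha_t$ are isometric and $\|p(t)\|\le C$, the fibre contributes at most a factor $C^2$, yielding $\|\rho\|\le C^2\|\rho_0\|<\infty$. Establishing this norm bound, by transferring the scalar estimate through the trivializations uniformly in $\alpha$, is where the real work lies; everything else is formal.
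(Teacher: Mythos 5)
Your overall architecture matches the paper's: the identity field $p\in\Theta$ with $\|p(t)\|\le C$, a partition of unity subordinate to a cover by relatively compact sets, the diagonal-type element $\sum_\alpha(\chi_\alpha a)\otimes(k_\alpha p)$, and the two formal verifications ($\pi_\A\circ\rho=\id_\A$ and left $\A$-linearity), all of which are correct modulo convergence. The gap is precisely in the step you flag as ``where the real work lies,'' and the route you sketch for it would not close it. First, a merely locally finite cover does not suffice: pointwise finiteness of the overlaps yields no uniform constant. The cancellation estimate (Lemma \ref{diagonal-element}, i.e.\ \cite[Theorem II.2.44]{He4}) controls $\bigl\|\sum_{p}\xi^{lp}\,a(s)\sqrt{g_{\mu_p}(s)}\bigr\|_{A_s}$ by (order of the cover) times $\max_p\|a\sqrt{g_{\mu_p}}\|_\A$, because at each $s$ only that many summands are nonzero; if the order is unbounded the bound is vacuous. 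The paper secures a cover of order $\le 3$ from \cite[Lemma 2.1]{He3}, which is what produces the finite constant $\|\rho\|\le 9C$; you never invoke a bounded-order cover, and without it your series need not define a bounded operator.

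Second, the proposed ``transfer from the base'' is not available in the form you describe. The kernel $\sum_\alpha\chi_\alpha\otimes k_\alpha$ is \emph{not} an element of $C_0(\Omega)_+\widehat{\otimes}\,C_0(\Omega)$: its partial sums are uniformly bounded but not Cauchy, exactly as $\sum_n e_n\otimes e_n$ fails to converge in $c_0\widehat{\otimes}c_0$ (your claim that this element ``has norm $1$'' refers to the partial sums; the series itself diverges there, as one sees by pairing $\sum_{n=N+1}^{M}e_n\otimes e_n$ with the bilinear form $(x,y)\mapsto\frac{1}{M-N}\sum_{n=N+1}^{M}x_ny_n$). Left projectivity of $C_0(\Omega)$ supplies a morphism on elements of $C_0(\Omega)$, not a universal kernel that you can act on by $a$ and $p$ in the two slots; the convergence of $\sum_\alpha(\chi_\alpha a)\otimes(k_\alpha p)$ comes from the decay $\|\chi_\alpha a\|_\A\to 0$ along the net (since $a$ vanishes at infinity and the cover consists of relatively compact sets) combined with the bounded order, and the estimate must be carried out fibrewise in $\A$ directly. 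Note also that no trivializations are needed for this estimate --- the inequality $\bigl\|\sum_p\xi^{lp}a\sqrt{g_{\mu_p}}\bigr\|_\A\le 3\max_p\|a\sqrt{g_{\mu_p}}\|_\A$ is evaluated fibre by fibre; local triviality is used only to show $p\in\Theta$, as in Corollary \ref{Ax-e-Omega-paracompact}. Carried out this way (the paper takes $\sqrt{g_\mu}$ in both tensor slots), the argument is elementary and does not pass through the projectivity of $C_0(\Omega)$ at all.
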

\begin{proof}
By \cite[Lemma 2.1]{He3}, for any paracompact locally compact space $\Omega$ there exists an open cover $\{U_\mu \}_{\mu \in \Lambda}$  by relatively compact sets such that each point in $\Omega$ has a neighbourhood which intersects no more than three sets in $\{U_\mu \}_{\mu \in \Lambda}$. By assumption, $\Omega$ is paracompact and so, by \cite[Theorem 5.1.5]{En}, $\Omega$ is normal.
By \cite[Problem 5.W]{Ke}, since $\{U_\mu \}_{\mu \in \Lambda}$ is a locally finite open cover of the normal space $\Omega$, it is possible to select a non-negative continuous function $g_{\mu}$ for each $U_{\mu}$ in $\U$ such that $g_{\mu}$ is $0$ outside $U_{\mu}$ and is everywhere less than or equal to one, and 
\[
\sum_{\mu \in \Lambda}  g_{\mu}(s) =1\;\; \text{for all } \;\; s \in \Omega.
\]
In Corollary \ref{Ax-e-Omega-paracompact} we showed that a field $ p \in \prod_{t \in \Omega} A_{t}$ such that $p(t) = e_{A_{t}}$ is continuous and $p \in \Theta$. Note that $\sup_{t \in \Omega} \| p(t) \| \le C$.
By \cite[Proposition 10.1.9 (ii)]{Di}, for every $ \mu$, $g_{\mu} p \in \Theta$. Since $g_{\mu}(t) \to 0$ as $t \to \infty$, we have
$g_{\mu} p \in \A$.

For $a \in \A$ and  $\lambda \in N(\Lambda)$, define 
\[
u_{a,\lambda} = \sum_{\mu \in \lambda}
a \sqrt{g_\mu} \otimes \sqrt{ g_\mu} p. 
\]
As in  \cite{He3} we shall show that, for any $a \in \A$, the net $u_{a,\lambda} $ converges in $ \A \widehat{\otimes}\A$.
Note that any compact $K \subset \Omega$ intersects only a finite number of sets in the locally finite covering $\{U_\mu \}_{\mu \in \Lambda}$ and, for any $a \in \A$, $\|a(t)\| \to 0$ as $t \to \infty$.
Let $\varepsilon >0$. There is a finite set $\lambda \in N(\Lambda)$
such that for $\mu \not\in \lambda $ we have $\left\| a \sqrt{g_{\mu_p}}\right\|_{\A} < \frac{\varepsilon}{18 C} $.

For  $\lambda \preccurlyeq \lambda', \lambda'' $ we have
\begin{align*}
 \| u_{a,\lambda''} - u_{a,\lambda'}\|_{\A \widehat{\otimes} \A}&= 
 \| u_{a,\lambda'' \setminus \lambda} + u_{a,\lambda} - u_{a,\lambda' \setminus \lambda} -  u_{a,\lambda}\|_{\A \widehat{\otimes} \A}\\
& \le \| u_{a,\lambda'' \setminus \lambda}\|_{\A \widehat{\otimes} \A} + \| u_{a,\lambda' \setminus \lambda}\|_{\A \widehat{\otimes} \A}.
\end{align*}
By \cite[Theorem II.2.44]{He4}, for any $\tilde{\lambda} = \{\mu_1, \dots, \mu_m \}$,
\begin{align*}
\| u_{a,\tilde{\lambda}} \|_{\A \widehat{\otimes} \A}&= \left\|\sum_{\mu \in \tilde{\lambda}}
a \sqrt{g_\mu}  \otimes \sqrt{ g_\mu} p\right\|_{\A}\\
& \le \frac{1}{m} \sum_{l=1}^{m} \left\|\sum_{p=1}^{m} \xi^{l p} 
a \sqrt{g_{\mu_p}} \right\|_{\A} \times \left\|\sum_{q=1}^{m} \sum_{j=1}^{\tilde{n}} \xi^{-l q} \sqrt{ g_{\mu_q}} p \right\|_{\A}
\end{align*}
where $\xi$ is a primary $m$-th root of $1$  in $\CC$.

 For any $ s \in \Omega$,  there are no more than $3$ nonzero terms in the equality $\sum_{\mu \in \Lambda}  g_{\mu}(s) =1$. Therefore we have
\[
 \left\|\sum_{p=1}^{m} \xi^{l p} a \sqrt{g_{\mu_p}} \right\|_{\A} 
 = \sup_{s \in \Omega}\left\|\sum_{p=1}^{m} \xi^{l p} a(s) \sqrt{g_{\mu_p}(s)} \right\|_{\A} \le 3 \max_{1\le p\le m}\left\| a \sqrt{g_{\mu_p}} \right\|_{\A}.
\]
Hence     
\[
\| u_{a,\lambda'' \setminus \lambda}\|_{\A \widehat{\otimes} \A}  \le 9 \max_{\mu \not\in \lambda}\left\| a \sqrt{g_{\mu}}\right\|_{\A} \max_{\mu \not\in \lambda}\left\|\sqrt{g_{\mu}} p\right\|_{\A} \le \frac{\varepsilon}{2}.
\]
Let us define $\rho: \A \rightarrow \A \widehat{\otimes}\A$ by setting, for every $a \in \A$,
\[
\rho(a) = \lim_\lambda  u_{a,\lambda}.
\]
Thus, in view of the completeness of $\A \widehat{\otimes}\A$, the map $\rho$ is well defined and it is clear from the definition that  $\rho$ is  a morphism of left  Banach  $\A$-modules with $\| \rho \| \le 9C$. 
For every $a \in \A$,
\begin{align*}
(\pi_{\A} \circ \rho)(a) &=\lim_\lambda \sum_{\mu \in \lambda}
a  g_\mu p \\
&= \lim_\lambda \sum_{\mu \in \lambda}  a g_\mu = a.
\end{align*}
Thus $\pi_{\A} \circ \rho = {\rm id}_{\A} $.
Therefore   $\A$ is left projective. 
\end{proof}

\begin{theorem}\label{Ax-with-e-H2} Let $\Omega$ be a Hausdorff locally compact space, 
let $\mathcal{U} = \{\Omega,A_t,\Theta\}$ 
 be a locally trivial continuous field of Banach algebras
 such that every $\A_{t}$ has an identity $e_{A_{t}}$, $t \in \Omega$, and $\sup_{t\in\Omega}\|e_{A_{t}}\|\leq C$
 for some constant $C$, and let the Banach algebra $\A$ be defined by $\mathcal{U}$. Then the following conditions  are equivalent:

{\rm (i)} $\Omega$ is paracompact;

{\rm (ii)} $\A$ is left projective;

{\rm (iii)} $\A$ is left projective and $\mathcal{U}$ is a disjoint union of $\sigma$-locally  trivial continuous fields of Banach algebras;

{\rm (iv)}
${\H}^2(\A,X) = \{0 \}$ for any right annihilator Banach  $\A$-bimodule $X$.
\end{theorem}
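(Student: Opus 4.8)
The plan is to prove the four conditions equivalent by establishing the three separate equivalences $(ii)\Leftrightarrow(iv)$, $(i)\Leftrightarrow(ii)$, and $(ii)\Leftrightarrow(iii)$, since each nontrivial implication has essentially been packaged into a result proved earlier in the paper. I would begin with $(ii)\Leftrightarrow(iv)$, which is purely homological and requires no continuous-field structure: it is immediate from \cite[Proposition IV.2.10(I)]{He4}, stating that a Banach algebra $\A$ is left projective if and only if $\H^2(\A,X)=\{0\}$ for every right annihilator Banach $\A$-bimodule $X$. Thus this equivalence stands independently of the topology of $\Omega$.

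Next I would treat $(i)\Leftrightarrow(ii)$ by combining the two converse results established above. The implication $(ii)\Rightarrow(i)$ is precisely Corollary \ref{Ax-e-Omega-paracompact}: using the uniform bound $\sup_{t}\|e_{A_t}\|\le C$, the section $p(t)=e_{A_t}$ defines a continuous field $p\in\Theta$ with $p(t)\neq 0$ everywhere and bounded norm, so Theorem \ref{nonzero-field-Omega-paracompact} forces $\Omega$ to be paracompact. Conversely, $(i)\Rightarrow(ii)$ is Proposition \ref{Ax-e-A-projective}, in which paracompactness of $\Omega$ is used to produce a locally finite partition of unity $\{g_\mu\}$ of order three and hence an explicit module splitting $\rho\colon\A\to\A\widehat{\otimes}\A$ with $\pi_\A\circ\rho=\mathrm{id}_\A$. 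Together these give $(i)\Leftrightarrow(ii)$.

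It remains to handle $(ii)\Leftrightarrow(iii)$. The implication $(iii)\Rightarrow(ii)$ is trivial, since left projectivity of $\A$ is literally one of the two clauses of $(iii)$. For $(ii)\Rightarrow(iii)$ I would first invoke the already-proved $(ii)\Rightarrow(i)$ to conclude that $\Omega$ is paracompact, and then apply Remark \ref{parac-disjoint-union} with the abstract condition $\Gamma$ taken to be ``trivial'' and with $\Omega_0=\Omega$. Since $\mathcal{U}$ is locally trivial --- that is, it locally satisfies $\Gamma$ --- the remark decomposes $\Omega$ as a disjoint union of open-closed $\sigma$-compact pieces $G_\mu$, $\mu\in\M$, and exhibits $\mathcal{U}$ as the disjoint union of the fields $\mathcal{U}|_{G_\mu}$, each of which $\sigma$-locally satisfies $\Gamma$, i.e. is $\sigma$-locally trivial. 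Adjoining the left projectivity furnished by $(ii)$ yields exactly $(iii)$.

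The main (and essentially the only) delicate point is this last step: recognising that the abstract condition $\Gamma$ of Remark \ref{parac-disjoint-union} may be instantiated as ``trivial'', so that ``locally satisfies $\Gamma$'' reads as ``locally trivial'' and ``$\sigma$-locally satisfies $\Gamma$'' reads as ``$\sigma$-locally trivial''. Everything else is bookkeeping: the genuine analytic content --- the two-way passage between left projectivity of $\A$ and paracompactness of $\Omega$ --- has already been carried by Corollary \ref{Ax-e-Omega-paracompact} and Proposition \ref{Ax-e-A-projective}, and the homological characterisation by \cite[Proposition IV.2.10(I)]{He4}.
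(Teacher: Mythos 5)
Your proposal is correct and follows essentially the same route as the paper: the same three ingredients (Proposition \ref{Ax-e-A-projective}, Corollary \ref{Ax-e-Omega-paracompact}, and \cite[Proposition IV.2.10(I)]{He4}) carry the equivalences (i)$\Leftrightarrow$(ii)$\Leftrightarrow$(iv), and Remark \ref{parac-disjoint-union} with $\Gamma$ instantiated as ``trivial'' supplies the disjoint-union decomposition in (iii). The only (harmless) difference is that you close the loop for (iii) via the trivial implication (iii)$\Rightarrow$(ii) and the chain (ii)$\Rightarrow$(i)$\Rightarrow$(iii), whereas the paper proves (iii)$\Rightarrow$(i) directly by citing Theorem \ref{Omega-paracompact}; your variant slightly shortens the argument without changing its substance.
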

\begin{proof} By Proposition \ref{Ax-e-A-projective}, the fact that $\Omega$ is paracompact implies left projectivity of $\A$. Thus (i) $\Longrightarrow$ (ii). 
By Corollary \ref{Ax-e-Omega-paracompact}, the left projectivity of $\A$ implies that $\Omega$ is paracompact. Hence  (ii) $\Longrightarrow$ (i). 

By Remark \ref{parac-disjoint-union}, if
$\Omega$ is paracompact, then $\mathcal{U}$ is a disjoint union of $\sigma$-locally  trivial continuous fields of Banach algebras. By Theorem \ref{Omega-paracompact}, conditions (iii) implies  paracompactness of $\Omega$.
Thus (i) $\Longleftrightarrow$ (iii).

By \cite[Proposition IV.2.10(I)]{He4}, $\A$ is left projective if and only if $ {\H}^2(\A,X) = \{0 \}$ for any right annihilator Banach  $\A$-bimodule $X$ and so (ii)  $\Longleftrightarrow$ (iv). 
  \end{proof}


\section{Biprojectivity of  Banach algebras of continuous fields} \label{biprojective}

 Let $(A_{\lambda})_{\lambda \in \Lambda}$ be a family of $C^*$-algebras. 
Let $\A$ be the set of
$$ x = (x_{\lambda}) \in \prod_{\lambda \in \Lambda} A_{\lambda}$$
such that, for every $\varepsilon > 0$,  $~\|x_{\lambda}\| < \varepsilon $
except for finitely many  $\lambda$. Let 
$\|x \| = \sup_{\lambda \in \Lambda}\|x_{\lambda}\|$; then $\A$ with 
$\| \cdot \|$ is a $C^*$-algebra and is called {\it the direct sum}
or {\it the bounded product } of the $C^*$-algebras
$(A_{\lambda})_{\lambda \in \Lambda}$.

Recall Selivanov's result \cite{Se2} that any biprojective $C^*$-algebra is the direct sum of  $C^*$-algebras of the type $M_n(\CC)$. Another proof of this result is given in \cite[Theorem 5.4]{Ly02}. Therefore one can see that biprojective $C^*$-algebras can be described as  $C^*$-algebras $\A$ defined by a continuous field
$\mathcal{U}=\{\Lambda,A_{x},\prod_{x \in \Lambda} A_{x} \}$ where $\Lambda$ has the  discrete topology and the $C^*$-algebras $A_{x}$, $x \in \Lambda$, are of the type $M_n(\CC)$.

\begin{theorem}\label{Omega-discrete} Let $\Omega$ be a Hausdorff locally compact space, let $\mathcal{U}=\{\Omega,A_{x},\Theta \}$ be a locally  trivial continuous field  of Banach algebras, and let
the Banach algebra $\A$ be defined by $\mathcal{U}$. Suppose that $\A$ is biprojective. Then {\rm (i)} the Banach algebras $A_{x}$, $x \in \Omega$, are uniformly biprojective and {\rm (ii)} $\Omega$ is discrete.
\end{theorem}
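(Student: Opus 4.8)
The plan is to treat the two assertions separately, deriving (i) directly from the material already developed and reserving the real work for (ii).

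For (i), observe that biprojectivity of $\A$ is precisely projectivity of $\A$ in $\A$-$\mathrm{mod}$-$\A$. Since $\mathcal{U}$ is locally trivial, Proposition \ref{A_t-unif-proj} (in its two-sided form) applies verbatim and yields at once that the Banach algebras $A_x$, $x\in\Omega$, are uniformly biprojective; thus (i) requires no separate argument. For (ii), I would fix a bimodule morphism $\rho:\A\to\A\widehat{\otimes}\A$ with $\pi_{\A}\circ\rho=\mathrm{id}_{\A}$ and show that every point of $\Omega$ is isolated. The decisive observation is that $\rho$ must intertwine the central action of $C_0(\Omega)$: for $f\in C_0(\Omega)$ and any product $a=bc$ with $b,c\in\A$, the left-module property gives $\rho(fa)=\rho((fb)c)=(fb)\cdot\rho(c)=f\cdot_{(1)}\rho(a)$, where $f\cdot_{(1)}$ denotes multiplication by $f$ in the first tensor leg; symmetrically the right-module property gives $\rho(af)=\rho(a)\cdot_{(2)}f$. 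Since $f$ is a scalar at each point, $fa=af$, hence $f\cdot_{(1)}\rho(a)=\rho(a)\cdot_{(2)}f$, and applying $\tau_s\otimes\tau_t$ produces the identity
\[
(f(s)-f(t))\,(\tau_s\otimes\tau_t)\rho(a)=0 \qquad (s,t\in\Omega).
\]
Because $\Omega$ is locally compact Hausdorff, hence Tychonoff, for any $s\neq t$ there is $f\in C_0(\Omega)$ with $f(s)\neq f(t)$; therefore $(\tau_s\otimes\tau_t)\rho(a)=0$, i.e. $F_{\rho(a)}(s,t)=0$ for all $s\neq t$, where $F$ is the function of Proposition \ref{F(s,t)funct}.

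To exploit this, fix $x_0\in\Omega$. By Lemma \ref{A_x^2neq0} we have $A_{x_0}^2\neq\{0\}$, so I choose $a_{x_0},b_{x_0}\in A_{x_0}$ with $a_{x_0}b_{x_0}\neq0$ and, using Lemma \ref{a-in-Theta}, lift them to $b,c\in\A$ with $b(x_0)=a_{x_0}$ and $c(x_0)=b_{x_0}$. Then $a:=bc\in\A^2\subseteq\overline{\A^2}$ is a product, so the central-intertwining computation applies to it, while Proposition \ref{F(s,t)funct}(iv) gives $F_{\rho(a)}(x_0,x_0)\ge\|a(x_0)\|=\|a_{x_0}b_{x_0}\|>0$. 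Since $F_{\rho(a)}$ is continuous on $\Omega\times\Omega$ by Proposition \ref{F(s,t)funct}(i), there is an open neighbourhood $W$ of $x_0$ with $F_{\rho(a)}>0$ on $W\times W$. If some $t\in W$ satisfied $t\neq x_0$, then $(x_0,t)\in W\times W$ would force $F_{\rho(a)}(x_0,t)>0$, contradicting the off-diagonal vanishing. Hence $W=\{x_0\}$ and $x_0$ is isolated; as $x_0$ was arbitrary, $\Omega$ is discrete.

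The main obstacle is the central-multiplier step: one must check that $\rho$, which is only assumed to be an $\A$-bimodule morphism, genuinely transports multiplication by $f\in C_0(\Omega)$ from the first leg to the second. Restricting attention to elements $a\in\A^2$ (which suffices, since the lifted product $bc$ is of this form) avoids any appeal to a bounded approximate identity and reduces the claim to the elementary identities displayed above; for a general $a\in\overline{\A^2}$ the same conclusion follows by approximating $a$ by finite sums of products and using that $M_f\otimes\mathrm{id}$ and $\mathrm{id}\otimes M_f$ are bounded on $\A\widehat{\otimes}\A$. Everything else is a routine combination of Proposition \ref{F(s,t)funct} with the separation of points of $\Omega$ by $C_0(\Omega)$.
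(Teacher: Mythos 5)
Your proof is correct, and its core mechanism is the same as the paper's: part (i) is exactly the paper's argument (Proposition \ref{A_t-unif-proj} in its bimodule form), and part (ii) rests on the same two pillars, namely that the bimodule property of $\rho$ together with the central action of $C_0(\Omega)$ forces $(\tau_s\otimes\tau_t)\rho(\cdot)$ to vanish for $s\neq t$, while $\pi_\A\circ\rho=\id_\A$ and the continuity of $F$ from Proposition \ref{F(s,t)funct} prevent this near the diagonal. The execution differs in a way worth noting. The paper works locally: on each trivializing patch it builds a nowhere-vanishing product field $p=xy$ (via Lemma \ref{A_x^2neq0}), forms the normalised function $\Phi(s,t)=F_{\rho(f_sp)}(s,t)/\|p(s)\|$, and must then check well-definedness and continuity of $\Phi$ before killing it off the diagonal with a cutoff $g$ pulled through the right-module action. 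You instead prove the global intertwining identity $(f(s)-f(t))(\tau_s\otimes\tau_t)\rho(a)=0$ for any product $a=bc$ and any $f\in C_0(\Omega)$, which yields off-diagonal vanishing of $F_{\rho(a)}$ on all of $\Omega\times\Omega$ at once; then a single lifted product per point (Lemmas \ref{a-in-Theta} and \ref{A_x^2neq0}) plus Proposition \ref{F(s,t)funct}(i),(iv) isolates that point. This buys you a cleaner argument that bypasses the well-definedness lemma and the normalisation by $\|p(s)\|$, at the cost of nothing: your use of $M_f\otimes\id$ and $\id\otimes M_f$ is legitimate since multiplication by $f\in C_0(\Omega)$ is a bounded central multiplier of $\A$, and restricting to honest products $a=bc$ (rather than all of $\overline{\A^2}$) is all you ever need.
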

\begin{proof} (i) By Proposition \ref{A_t-unif-proj}, the Banach algebras $A_{x}$, $x \in \Omega$, are uniformly biprojective. 

(ii) Since $\A$ is biprojective, there exists a morphism of  Banach  $\A$-bimodules
 $\rho: \A \rightarrow \A \widehat{\otimes} \A$ 
such that $\pi_{\A } \circ \rho = {\rm id}_{\A} $.

By \cite[Theorem 5.17]{Ke},  a Hausdorff locally compact space is regular. Therefore, since $\mathcal{U}$ is locally trivial
on $\Omega$,  there is an open cover $\{ U_\mu \}$, $\mu \in \M$, of $\Omega$ such that each $\mathcal{U}|_{U_\mu}$ is trivial and, in addition,
there is an open cover $\{ V_\alpha\}$ of $\Omega$ such that $ \overline{V_\alpha} \subset U_{\mu(\alpha)}$  for each $\nu$ and some $\mu(\alpha) \in \M$.

Let us show that, for every $\alpha$, ${V_\alpha}$ is discrete.
By Lemma \ref{A_x^2neq0}, there are continuous vector fields $x$ and $y$ on $ U_{\mu(\alpha)}$ such that $p(t)= x(t)y(t) \neq 0$
for every $t \in   U_{\mu(\alpha)}$.
By \cite[Theorem 3.3.1]{En}, $\Omega$ is a Tychonoff space and so, for every $ s \in {V_\alpha}$, there is $f_s \in C_0(\Omega)$ such that $0 \le f_s \le 1$, $f_s(s)=1$ and $f_s(t) = 0$ for all $t \in \Omega \setminus U_{\mu(\alpha)}$. Note that $f_s p \in \A$.
For every $ s \in {V_\alpha}$ and $t \in \Omega$, we set
\[
\Phi(s,t) = F_{\rho(f_s p)}(s,t)/\|p(s) \|,
\]
where the function $F$ is defined in Proposition \ref{F(s,t)funct}. 
By Proposition \ref{F(s,t)funct}, $\Phi(s,s) \ge 1$ for every 
$s \in {V_\alpha}$.

As in Lemmas \ref{Phi-well-defined} and  \ref{Phi-continuous}, the function  $\Phi(s,t)$ is a positive continuous function on ${V_\alpha} \times \Omega$ and does not depend on the choice of $f_s$.

Further, for every $ s,t \in {V_\alpha}$ such that $s \neq t$, there is $g \in C_0(\Omega)$ such that $0 \le g \le 1$, $g(s)=1$ and $g(t) = 0$. 
Since $\rho$ is a morphism of Banach $\A$-bimodules, we have 
\begin{align*}
\Phi(s,t) ~&=  F_{\rho(g\;f_s \; p)}(s,t)/\|p(s) \|\\
  ~&= \| (\tau_{s}\otimes \tau_{t})\rho(f_s \;x g\; y)\|_{\it A_s\widehat{\otimes} A_t}/\|p(s) \|\\
 ~&= \| (\tau_{s}\otimes \tau_{t})\rho(f_s \;x \;g\; y)\|_{\it A_s\widehat{\otimes} A_t}/\|p(s) \|\\
~&= \| (\tau_{s}\otimes \tau_{t})\rho(f_s \;x ) g(t)\; y(t)\|_{\it A_s\widehat{\otimes} A_t}/\|p(s) \|=0.
\end{align*}
Therefore, $\Phi(s,t)= 0$ for every $ s,t \in {V_\alpha}$ such that $s \neq t$, and $\Phi(s,s) \ge 1$ for every $s \in {V_\alpha}$.
Because $\Phi(s,t)$ is a positive continuous function on ${V_\alpha} \times {V_\alpha}$, this implies that  ${V_\alpha}$ is discrete.

Recall that $\Omega = \bigcup_\alpha {V_\alpha}$ where, for each $\alpha$, ${V_\alpha}$ is 
an open subset of $\Omega$. Thus $\Omega$ is discrete too.
  \end{proof}

A  Banach algebra $\A$ is said to be 
{\it contractible} if $\A_+$ is projective in
the category of  $\A$-bimodules.
 A  Banach  algebra $\A$ is
contractible if and only if $\A$ is biprojective and has an identity
 \cite[Def. IV.5.8]{He4}.

\begin{theorem}\label{A-contractible} Let $\Omega$ be a Hausdorff locally compact  space, let $\mathcal{U}=\{\Omega, A_{x},\Theta \}$ be a locally  trivial continuous field  of Banach algebras, and let the Banach algebra $\A$ be defined by $\mathcal{U}$.
Then the following conditions  are equivalent:

{\rm (i)} $\A$ is contractible;

{\rm (ii)} $\Omega$ is finite and discrete, and the Banach algebras $A_{x}$, $x \in \Omega$, are contractible;

{\rm (iii)} ${\H}^n(\A,X) = \{0 \}$ for any Banach  $\A$-bimodule $X$ and all $n \ge 1$.
\end{theorem}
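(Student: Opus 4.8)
The plan is to prove the two equivalences (i) $\Leftrightarrow$ (ii) and (i) $\Leftrightarrow$ (iii) separately, using throughout the criterion recalled immediately before the statement: by \cite[Def.~IV.5.8]{He4} a Banach algebra is contractible if and only if it is biprojective and has an identity. The implication (i) $\Rightarrow$ (ii) carries the main new content; the reverse implication and the cohomological equivalence are then assembled from results already available.

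First I would treat (i) $\Rightarrow$ (ii). Assume $\A$ is contractible, so $\A$ is biprojective and unital. Biprojectivity together with Theorem \ref{Omega-discrete} shows at once that $\Omega$ is discrete and that the fibres $A_{x}$ are (uniformly) biprojective. The identity $e \in \A$ restricts through the dense-range homomorphisms $\tau_{x} \colon \A \to A_{x}$ to the element $e(x)$ which, by density and continuity of the product, is an identity $e_{x}$ of $A_{x}$; hence each $A_{x}$ is biprojective and unital, and so contractible by \cite[Def.~IV.5.8]{He4}. As each $A_{x}$ is nonzero, its identity satisfies $\|e(x)\| = \|e_{x}\| = \|e_{x}^{2}\| \le \|e_{x}\|^{2}$, whence $\|e(x)\| \ge 1$ for every $x \in \Omega$. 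On the other hand $e \in \A$, so by Definition \ref{cont-fields-C*-alg} the function $x \mapsto \|e(x)\|$ vanishes at infinity on the discrete space $\Omega$; a function bounded below by $1$ can vanish at infinity on a discrete space only if that space is finite. Therefore $\Omega$ is finite and discrete and every $A_{x}$ is contractible, which is (ii). I expect this step---upgrading discreteness to finiteness via the norm of the identity---to be the main obstacle.

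Next, for (ii) $\Rightarrow$ (i), suppose $\Omega$ is finite and discrete with each $A_{x}$ contractible. Discreteness and Example \ref{discrete-cont-fields} give $\Theta = \prod_{t \in \Omega} A_{t}$, and finiteness makes the vanishing-at-infinity condition of Definition \ref{cont-fields-C*-alg} automatic, so $\A = \bigoplus_{x \in \Omega} A_{x}$ is a finite direct sum. I would then verify that a finite direct sum of contractible algebras is contractible: the tuple $(e_{x})_{x}$ is an identity for $\A$, and if $\rho_{A_{x}} \colon A_{x} \to A_{x} \widehat{\otimes} A_{x}$ is a bimodule right inverse of $\pi_{A_{x}}$ for each $x$, then, because the cross products $A_{x} \cdot A_{y}$ vanish for $x \neq y$, the map $\rho = \bigoplus_{x} \rho_{A_{x}}$ is a bimodule right inverse of $\pi_{\A}$. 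Thus $\A$ is biprojective and unital, hence contractible by \cite[Def.~IV.5.8]{He4}.

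Finally, (i) $\Leftrightarrow$ (iii) is the standard homological description of contractibility: $\A$ is contractible if and only if $\H^{1}(\A,X) = \{0\}$ for every Banach $\A$-bimodule $X$, and, by dimension shifting, this is equivalent to $\H^{n}(\A,X) = \{0\}$ for all $n \ge 1$ and all $X$ \cite{He4}. I would invoke this directly, in exact parallel to the way \cite[Proposition IV.2.10(I)]{He4} was used to link left projectivity with the vanishing of $\H^{2}$ in the earlier theorems. Combining the three parts yields the equivalence of (i), (ii) and (iii).
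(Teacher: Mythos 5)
Your proposal is correct and follows essentially the same route as the paper: Theorem \ref{Omega-discrete} for discreteness and biprojectivity of the fibres, the bound $\|e(x)\|\ge 1$ combined with vanishing at infinity to force finiteness, the finite direct sum construction for the converse, and \cite[Proposition IV.5.8]{He4} for the cohomological equivalence. You in fact supply two details the paper leaves implicit --- that $\tau_x(e)$ is an identity of $A_x$ by density, so each fibre is genuinely contractible and not merely biprojective, and the explicit verification that $\rho=\bigoplus_x\rho_{A_x}$ splits $\pi_{\A}$ --- which is a welcome tightening rather than a different approach.
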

\begin{proof} A finite direct sum of contractible Banach algebras is contractible and so (ii) $\Longrightarrow$ (i). 
By Theorem \ref{Omega-discrete}, if $\A$ is contractible then the Banach algebras $A_{x}$, $x \in \Omega$, are uniformly biprojective and  $\Omega$ is discrete. Since $\A$ has an identity $e$ and so $\|e(t)\| \ge 1$, for all $ t \in \Omega$, we have that $\Omega$ is compact.
Thus (i) $\Longrightarrow$ (ii).

By \cite[Proposition IV.5.8]{He4}, $\A$ is contractible if and only if 
${\H}^n(\A,X) = \{0 \}$ for all Banach  $\A$-bimodule $X$ and all $n \ge 1$, and so (i)  $\Longleftrightarrow$ (iii). 
  \end{proof}

In \cite[Examples 4.4 and 4.5]{Se00} there are descriptions of some biprojective Banach algebras $\A$ with very simple morphisms of $\A$-bimodules $\rho: \A \to \A \widehat{\otimes}\A$. We will use these algebras to constract examples of biprojective Banach algebras defined by continuous fields of Banach algebras over a discrete topological space.

\begin{example} {\rm Let $\Omega$ be a topological space with the  discrete topology. For every $t \in \Omega$,
let $E_t$ be an arbitrary Banach space of dimension $\dim E_t > 1$. Take a continuous linear functional $f_t \in E_t^*$, $\|f_t\|=1$ and define on 
$E_t$ the structure of a Banach algebra $A_{f_{t}}(E_{t})$ with multiplication given by $a b = f_t(a) b$, $a,b \in A_{f_{t}}(E_{t})$.
Choose $e_t \in E_t$ such that $f_t(e_t) =1$ and $\|e_t\|\le 2$. Then 
$e_t$ is a left identity of $A_{f_{t}}(E_{t})$. Consider the operator
$\rho_t:A_{f_{t}}(E_{t}) \to A_{f_{t}}(E_{t}) \widehat{\otimes} A_{f_{t}}(E_{t})$ defined $ a \mapsto e_t \otimes a$. It is easy to check that 
$\rho_t$ is an $A_{f_{t}}(E_{t})$-bimodule morphism, $\|\rho_t\| \le \|e_t\|\le 2$ and $\pi_{A_{f_{t}}(E_{t})}\circ \rho_t = {\rm id}_
{A_{f_{t}}(E_{t})} $, and so $A_{f_{t}}(E_{t})$ is a biprojective Banach algebra.

 Consider  the continuous field of Banach algebras  
$\mathcal{U}=\{\Omega, A_{t},\Theta \}$
 where $A_{t}$ is  the Banach algebra  $A_{f_{t}}(E_{t})$ with $f_t \in E_t^*$, $\|f_t\|=1$. Let $g_t$ be a continuous function on $\Omega$ such that $g_t(t)=1$
and $g_t(s)=0$ for all $s \neq t$. Since $\Omega$ has the discrete topology, by Example \ref{discrete-cont-fields},  $\Theta = \prod_{t \in \Omega}A_{t}$, and  so the field $e_t g_t $ such that $(e_t g_t)(s)=0$ for all $s \neq t$ and 
$(e_t g_t) (t)= e_t$ belongs to the Banach algebra $\A$ defined by 
$\mathcal{U}$.
Let $N(\Omega)$ be the set of finite subsets of $\Omega$ ordered by inclusion. For $a \in \A$ and  $\lambda \in N(\Omega)$, define 
\[
u_{a,\lambda} = \sum_{t \in \lambda} e_t g_t \otimes  g_t a. 
\]
By assumption,  $\sup_{t \in \Omega}\|e_t\| \le 2$. 
As in Proposition \ref{Ax-e-A-projective}, one can show that, for any $a \in \A$, the net $u_{a,\lambda}$ converges in $ \A \widehat{\otimes}\A$.
Let us define $\rho: \A \rightarrow \A \widehat{\otimes}\A$ by setting, for every $a \in \A$,
\[
\rho(a) = \lim_{\lambda}  \sum_{t \in \lambda} g_t e_t \otimes  g_t a.
\]
It is easy to check that   $\rho$ is  a morphism of   Banach  $\A$-bimodules and $\|\rho\| \le 2$. For every $a \in \A$,
\[
(\pi_{\A} \circ \rho)(a) =\lim_\lambda \sum_{t \in \lambda}
e_t  g_t a = \lim_\lambda \sum_{t \in \lambda}  a g_t = a.
\]
Thus $\pi_{\A} \circ \rho = {\rm id}_{\A} $.
Therefore   $\A$ is biprojective.

By \cite[Theorem V.2.28]{He4}, since $\A$ is biprojective,  ${\H}^n(\A,X) = \{0 \}$ for all Banach  $\A$-bimodule $X$ and all $n \ge 3$.
By \cite[Theorem 6]{Se95}, since $\A$ is biprojective and has a left bounded approximate identity,  ${\H}^n(\A,X^*) = \{0 \}$ for all dual Banach  $\A$-bimodule $X^*$ and all $n \ge 2$.
}
  \end{example}

\begin{example} {\rm Let $\Omega$ be a topological space with the discrete topology. For every $t \in \Omega$,
let $(E_t, F_t, \langle \cdot,\cdot \rangle_t)$ be  a pair of Banach spaces  with a non-degenerate continuous bilinear form  $\langle x,y \rangle_t$, $x \in E_t$, $y \in F_t$, with $\|\langle \cdot,\cdot \rangle_t\| \le 1$. The {\it tensor algebra $E_t \widehat{\otimes} F_t$ generated by the duality $\langle \cdot,\cdot \rangle_t$} can be constracted on the Banach space
$E_t \widehat{\otimes} F_t$ where the multiplication is defined by the formula
\[
(x_1 \otimes y_1) (x_2 \otimes y_2) =\langle x_2,y_1 \rangle_t x_1 \otimes y_2, \;\; x_i \in E_t, y_i \in F_t.
\]
Choose $x^0_t \in E_t, y^0_t \in F_t$ such that $\langle x^0_t,y^0_t \rangle_t =1$, $\|y^0_t\|=1$ and $\|x^0_t\|\le 2$.

 Consider the operator
$\rho_t:E_t \widehat{\otimes} F_t \to (E_t \widehat{\otimes} F_t) \widehat{\otimes} (E_t \widehat{\otimes} F_t)$ defined $ x \otimes y \mapsto (x \otimes y^0_t) \otimes (x^0_t \otimes y)$, $x \in E_t, y \in F_t$.
It is easy to check that $\rho_t$ is an $E_t \widehat{\otimes} F_t$-bimodule morphism, $\|\rho_t\| \le 2$ and $\pi_{E_t \widehat{\otimes} F_t}\circ \rho_t = {\rm id}_
{E_t \widehat{\otimes} F_t} $, and so $E_t \widehat{\otimes} F_t$ is a biprojective Banach algebra.

 Consider  the continuous field of Banach algebras 
$\mathcal{U}=\{\Omega, A_{t},\Theta \}$
  where $A_{t}$ is  the Banach algebra 
$E_t \widehat{\otimes} F_t$ with $\|\langle \cdot,\cdot \rangle_t\| \le 1$.
Let $\A$ be the Banach algebra defined by $\mathcal{U}$.
 Let $g_t$ be a continuous function on $\Omega$ such that $g_t(t)=1$
and $g_t(s)=0$ for all $s \neq t$.

 Since $\Omega$ has the discrete topology, by Example \ref{discrete-cont-fields},  $\Theta = \prod_{t \in \Omega}A_{t}$, and  so, for every $t \in \Omega$ and every $x_t\otimes y_t \in E_t \widehat{\otimes} F_t$, 
the field $ g_t x_t\otimes y_t$,  such that $(g_t x_t\otimes y_t)(s)=0$ for all $s \neq t$ and $(g_t x_t\otimes y_t) (t)= x_t\otimes y_t$ belongs to $\A$. Let $N(\Omega)$ be the set of finite subsets of $\Omega$ ordered by inclusion. For every $t \in \Omega$, choose $x^0_t \in E_t, y^0_t \in F_t$ such that $\langle x^0_t,y^0_t \rangle_t =1$, $\|y^0_t\|=1$ and $\|x^0_t\|\le 2$.
For $a = \{ x(t) \otimes y(t) \}_{t \in \Omega} \in \A$,  and  $\lambda \in N(\Omega)$, define 
\[
u_{a,\lambda} = \sum_{t \in \lambda}  (g_t x(t) \otimes y^0_t) \otimes (g_t x^0_t \otimes y(t)).
\] 
As in Proposition \ref{Ax-e-A-projective}, one can show that, for any $a \in \A$, the net $u_{a,\lambda}$ converges in $ \A \widehat{\otimes}\A$.
Let us define $\rho: \A \rightarrow \A \widehat{\otimes}\A$ by setting, for every $a \in \A$,
\[
\rho(a) = \lim_{\lambda}  \sum_{t \in \lambda} (g_t  x(t) \otimes y^0_t) \otimes (g_t x^0_t \otimes y(t)).
\]
It is easy to check that   $\rho$ is  a morphism of   Banach  $\A$-bimodules and $\|\rho\| \le 2$. For every $a \in \A$,
\[
(\pi_{\A} \circ \rho)(a) =\lim_\lambda \sum_{t \in \lambda}
g_t  (x(t) \otimes y^0_t)(x^0_t \otimes y(t))
= \lim_\lambda \sum_{t \in \lambda} g_t\; \langle x^0_t,y^0_t \rangle_t \;(x(t) \otimes y(t))   = a.
\]
Thus $\pi_{\A} \circ \rho = {\rm id}_{\A} $.
Therefore   $\A$ is biprojective.

By \cite[Theorem V.2.28]{He4}, since $\A$ is biprojective,  ${\H}^n(\A,X) = \{0 \}$ for all Banach  $\A$-bimodule $X$ and all $n \ge 3$.
}
  \end{example}


\section{Left projectivity of $C^*$-algebras of some continuous fields } \label{left-proj-C*-posit-elem}

In \cite{Ly2} we proved that every separable $C^*$-algebra $\A$ and every closed left ideal of $\A$ are left projective. 
It is well-known that if $\A$ is separable, then $\A$ has a strictly positive element \cite{AaKa}. Indeed, if $\{y_n\}$ is dense in $\A$, then $\{x_n = y^*_n y_n\}$    is dense in $\A^+= \{x \in \A: x \ge 0\}$. Clearly $ x= \sum_{n =1}^{\infty}\frac{x_n}{2^n ||x_n||}$ is strictly positive
in $\A$. By \cite[Theorem 1]{AaKa}, a $C^*$-algebra 
$\A$ contains  a strictly positive element if and only if  
$\A$ has a countable increasing abelian approximate identity $e_i$, $i =1,2,\dots$, bounded by one. By \cite[Theorem 2.5]{PhR} or \cite[Theorem 1]{Ly2}, a $C^*$-algebra $\A$ with a countable increasing abelian approximate identity  is left and right projective. For  $C^*$-algebras $\A$, the relations between the separability, the existense of a strictly positive element in $\A$ and the left projectivity of $\A$ can be
summarised thus:
\[
\{\A \; \text{is separable} \} \subsetneq \{\A \; \text{has  a strictly positive element } \} \subsetneq \{\A \; \text{is left projective} \}. 
\]

\noindent For commutative $C^*$-algebras $\A$, that is, $\A =C_0(\widehat{\A})$ where the spectrum $\widehat{\A}$ is a Hausdorff locally compact  space, we have the following relations 

\begin{equation}\label{comm-A-sep-str_pos-proj}
\begin{array}{cccccc}
\A \; \text{is separable}     & ~ 
\begin{array}{cc} 
\Longrightarrow \\
\not\Longleftarrow
\end{array}                     & ~
\begin{array}{cc} 
\A \; \text{has a strictly} \\
\text{ positive element} 
\end{array}                      & ~ 
\begin{array}{cc} 
\Longrightarrow \\
\not\Longleftarrow
\end{array} & ~
\A \; \text{is left projective} \\
~ & ~ & ~ & ~ & ~\\
 \Updownarrow & ~ & ~ \Updownarrow & ~ & ~\Updownarrow \\
~ & ~ & ~ & ~ & ~\\
\begin{array}{cc} 
\widehat{\A} \; \text{is} \\
\text{ metrizable and has} \\
\text{a countable base} 
\end{array}   & ~
\begin{array}{cc} 
\Longrightarrow \\
\not\Longleftarrow
\end{array} & ~  
\widehat{\A} \; \text{is $\sigma$-compact}  &~
\begin{array}{cc}
\Longrightarrow \\
\not\Longleftarrow 
\end{array}  & ~ 
\widehat{\A} \; \text{is paracompact}.
\end{array}
\end{equation}

\begin{remark} {\rm (i) A Hausdorff locally compact  space $\Omega$ may be $\sigma$-compact without having a countable base for its topology, so $\A = C_0(\Omega)$ may have a strictly positive element without being separable. 

(ii) There are paracompact  spaces which are not $\sigma$-compact. For example,  any metrizable space is paracompact, but is not always $\sigma$-compact. The simple example  $\A=C_0(\RR)$ where $\RR$ is endowed with the discrete topology is a left projective $C^*$-algebra  without strictly positive elements.
}
\end{remark}
We will prove in  Theorem \ref{count_Suslin_number} that,
for a commutative $C^*$-algebra $\A$ for which $\widehat{\A}$ has  countable Suslin number,   the condition of left projectivity is  equivalent to the existence of a strictly positive element, but not to the separability of $\A$; see Example \ref{nonsepA}.

In this section we also show that the class of left projective $C^*$-algebras includes noncommutative $C^*$-algebras defined by  some continuous fields (Theorem \ref{cont-field-strict-posit}).

The smallest cardinal number $m \ge {\aleph}_0$ such that every family 
of non-empty open, pairwise disjoint subsets of a topological space $\Omega$ has cardinality $\le m$ is called {\it the Suslin number} of $\Omega$ and it is denoted by $c(\Omega)$. The topological space $\Omega$ satisfies  {\it the Suslin condition} if $c(\Omega) ={\aleph}_0 $. 
A topological space $\Omega$ is called a {\it Lindel$\ddot{\rm o}$f space} if each open cover of $\Omega$ has a countable subcover.

\begin{theorem}\label{count_Suslin_number}
Let $\A$ be a commutative $C^*$-algebra,  so that $\A =C_0(\widehat{\A})$, and let $\widehat{\A}$ satisfy  the Suslin condition.  Then the following are equivalent:

 {\rm (i)} $\A$ is  projective in $\A$-$\mathrm{mod}$;

 {\rm (ii)} the spectrum $\widehat{\A}$ of $\A$ is  paracompact;

 {\rm (iii)} $\A$ contains a strictly positive element;

 {\rm (iv)} the spectrum $\widehat{\A}$ is $\sigma$-compact;

 {\rm (v)} $\widehat{\A}$ is a Lindel$\ddot{\it o}$f space;

{\rm (vi)} $\A$ has a sequential approximate identity bounded by one;

{\rm (vii)} ${\H}^2(\A,X) = \{0 \}$ for any right annihilator Banach  $\A$-bimodule $X$.
\end{theorem}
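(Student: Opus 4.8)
The plan is to arrange the seven statements so that almost every equivalence follows from facts already assembled in this excerpt, isolating a single topological implication as the genuine content; that implication is the only point at which the Suslin condition is used. First I would harvest the equivalences valid for an arbitrary commutative $C^*$-algebra $\A = C_0(\widehat{\A})$ over a locally compact Hausdorff space. The vertical equivalences in diagram \eqref{comm-A-sep-str_pos-proj} already supply (i) $\Leftrightarrow$ (ii), that $\A$ is left projective iff $\widehat{\A}$ is paracompact (Helemskii \cite{He3}), and (iii) $\Leftrightarrow$ (iv), that $\A$ has a strictly positive element iff $\widehat{\A}$ is $\sigma$-compact; the same diagram records $\sigma$-compact $\Rightarrow$ paracompact, which is (iv) $\Rightarrow$ (ii). The equivalence (i) $\Leftrightarrow$ (vii) is immediate from \cite[Proposition IV.2.10(I)]{He4}, and (iii) $\Leftrightarrow$ (vi) is the Aarnes--Kadison characterization \cite{AaKa} of a strictly positive element by a countable (sequential) increasing approximate identity bounded by one, already quoted before the theorem. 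Finally (iv) $\Leftrightarrow$ (v) holds for any locally compact Hausdorff space: a $\sigma$-compact space is Lindel\"of, while a Lindel\"of locally compact Hausdorff space is $\sigma$-compact, since covering it by relatively compact open sets and extracting a countable subcover exhibits it as a countable union of compacta. At this stage every edge of the equivalence is present except one: I must prove (ii) $\Rightarrow$ (iv).

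The implication (ii) $\Rightarrow$ (iv) is where the Suslin condition enters, and it is the crux of the theorem. Suppose $\widehat{\A}$ is paracompact. By \cite[Theorem 5.1.27]{En}, invoked exactly as in Remark \ref{parac-disjoint-union}, a paracompact locally compact Hausdorff space is the disjoint union of a family $\{G_\mu\}_{\mu \in \M}$ of open-closed $\sigma$-compact subspaces. Each $G_\mu$ is a nonempty open set and the members of the family are pairwise disjoint, so the hypothesis $c(\widehat{\A}) = \aleph_0$ forces $|\M| \le \aleph_0$. A countable disjoint union of $\sigma$-compact sets is $\sigma$-compact, whence $\widehat{\A}$ is $\sigma$-compact, which is (iv). This closes the cycle and yields the equivalence of all seven conditions.

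The main obstacle is precisely this final step: recognizing that countable cellularity collapses the potentially uncountable paracompact decomposition into a countable one, thereby upgrading paracompactness to $\sigma$-compactness. Without the Suslin condition the implication genuinely fails, since an uncountable discrete space is paracompact (indeed locally compact Hausdorff) but neither $\sigma$-compact nor Lindel\"of; so the entire weight of the theorem rests on bounding the number of clopen $\sigma$-compact components by the Suslin number. All remaining implications are bookkeeping over results already available in the excerpt.
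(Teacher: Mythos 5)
Your proof is correct and follows essentially the same route as the paper: every equivalence except the topological core is quoted from Helemskii, Aarnes--Kadison and Doran--Wichmann, and the Suslin condition enters only to upgrade paracompactness to $\sigma$-compactness. The paper simply cites Engelking (Theorems 5.1.2, 5.1.25 and Exercise 3.8.C(b)) for the chain (ii) $\Leftrightarrow$ (iv) $\Leftrightarrow$ (v) under countable Suslin number, whereas you unpack that citation into the clopen $\sigma$-compact decomposition plus the cellularity count; that is exactly the content of the cited results, so nothing is lost and your version is more self-contained. One small correction: condition (vi) does not assume the approximate identity is increasing or abelian, so for (vi) $\Rightarrow$ (iii) you need the Doran--Wichmann fact that any countable approximate identity already yields a strictly positive element (the paper cites \cite[Proposition 12.7 and Lemma 12.9]{DW} for (iii) $\Leftrightarrow$ (vi)); Aarnes--Kadison as you state it only gives (iii) $\Rightarrow$ (vi), but the missing half is quoted verbatim in the paragraphs preceding the theorem, so the patch is immediate.
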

\begin{proof} If  the Suslin number of the topological space $\Omega$ is ${\aleph}_0 $ then, by \cite[Theorems 5.1.2, 5.1.25 and Exercise 3.8.C(b)]{En}, $\Omega$  is  paracompact if and only if $\Omega$ is $\sigma$-compact if and only if $\Omega$ is a Lindel$\ddot{\rm o}$f space.
Thus (ii) $\Longleftrightarrow$ (iv)  $\Longleftrightarrow$ (v).
 By \cite{AaKa}, $C_0(\Omega)$ contains a strictly positive element
if and only if $\Omega$ is $\sigma$-compact and so (iii) $\Longleftrightarrow$ (iv). By \cite[Theorem 4]{He3}, a 
 commutative $C^*$-algebra $\A$ is projective in $\A$-$\mathrm{mod}$~ if and only if its spectrum  $\widehat{\A}$ is paracompact. Hence (i) $\Longleftrightarrow$ (ii). By \cite[Proposition 12.7 and Lemma 12.9]{DW}, (iii) $\Longleftrightarrow$ (vi). By \cite[Proposition IV.2.10(I)]{He4},  $\A$ is left projective if and only if $ {\H}^2(\A,X) = \{0 \}$ for any right annihilator Banach  $\A$-bimodule $X$, and so (i) $\Longleftrightarrow$ (vii). 
  \end{proof}

\begin{lemma}\label{SuslinCond}~ {\rm \cite[Lemma 4.5]{Ly02}}
Let $\A$ be a commutative $C^*$-algebra  contained in ${\cal B}(H)$, where $H$ is a separable Hilbert space. Then the spectrum   $\widehat{\A}$ of  $\A$ satisfies the Suslin condition. 
\end{lemma}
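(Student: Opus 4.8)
The plan is to argue directly from the definition of the Suslin condition, exploiting the separability of $H$ through a countable dense set of vectors. Recall that $\A$ is $*$-isomorphic to $C_0(\widehat{\A})$ via the Gelfand transform, and since $\A$ is a $C^*$-subalgebra of $\B(H)$ the operator norm on $\A$ coincides with the supremum norm on $C_0(\widehat{\A})$. Let $\{U_\alpha\}_{\alpha \in I}$ be an arbitrary family of non-empty, pairwise disjoint open subsets of $\widehat{\A}$; the goal is to show that $I$ is countable.

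First I would manufacture a family of mutually orthogonal positive elements. Since $\widehat{\A}$ is locally compact Hausdorff and each $U_\alpha$ is non-empty and open, by Urysohn's lemma there is, for each $\alpha$, a function $f_\alpha \in C_0(\widehat{\A}) = \A$ with $0 \le f_\alpha \le 1$, $\|f_\alpha\| = 1$, and $f_\alpha$ vanishing outside $U_\alpha$. Because the $U_\alpha$ are pairwise disjoint, the $f_\alpha$ have pairwise disjoint supports, so $f_\alpha f_\beta = 0$ for $\alpha \neq \beta$, and for every finite subset $F \subset I$ the disjointness of supports gives $\|\sum_{\alpha \in F} f_\alpha\| = \max_{\alpha \in F} \|f_\alpha\| = 1$.

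Next I would use separability. Fix a countable dense subset $\{\eta_n : n \in \NN\}$ of $H$. For a fixed vector $\eta \in H$, positivity of the $f_\alpha$ together with the norm estimate above yields, for every finite $F \subset I$, the bound $\sum_{\alpha \in F} \langle f_\alpha \eta, \eta \rangle = \langle (\sum_{\alpha \in F} f_\alpha)\eta, \eta\rangle \le \|\eta\|^2$; hence $\sum_{\alpha \in I} \langle f_\alpha \eta, \eta\rangle \le \|\eta\|^2 < \infty$, so only countably many $\alpha$ satisfy $\langle f_\alpha \eta, \eta\rangle > 0$. On the other hand, each $f_\alpha$ is a non-zero positive operator, so there is a vector $\xi$ with $\langle f_\alpha \xi, \xi\rangle > 0$, and by density and continuity some $\eta_{n(\alpha)}$ with $\langle f_\alpha \eta_{n(\alpha)}, \eta_{n(\alpha)}\rangle > 0$. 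Therefore $I = \bigcup_{n \in \NN} \{\alpha \in I : \langle f_\alpha \eta_n, \eta_n\rangle > 0\}$ is a countable union of countable sets, so $I$ is countable. This shows $c(\widehat{\A}) = \aleph_0$, i.e. $\widehat{\A}$ satisfies the Suslin condition.

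The main obstacle is the summability estimate $\sum_{\alpha \in I} \langle f_\alpha \eta, \eta\rangle \le \|\eta\|^2$, which is the crux of the argument: it rests on the fact that the partial sums $\sum_{\alpha \in F} f_\alpha$ remain norm-bounded by $1$, and this in turn depends on the disjointness of the supports (equivalently, the mutual orthogonality of the $f_\alpha$). Once this uniform bound is in place, separability does the rest routinely, converting ``$f_\alpha$ is a non-zero operator'' into ``$f_\alpha$ pairs positively with some fixed dense vector $\eta_n$.''
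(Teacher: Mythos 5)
Your argument is correct. The paper gives no proof of this lemma at all --- it is quoted verbatim from \cite[Lemma 4.5]{Ly02} --- but your proof is the standard one and is complete: the disjointly supported Urysohn functions $f_\alpha$ are mutually orthogonal positive contractions, the bound $\bigl\|\sum_{\alpha\in F} f_\alpha\bigr\|\le 1$ for finite $F$ forces $\sum_{\alpha\in I}\langle f_\alpha\eta,\eta\rangle\le\|\eta\|^2$ so that each fixed vector detects only countably many $\alpha$, and a countable dense subset of $H$ detects every $\alpha$ since each $f_\alpha$ is a non-zero positive operator.
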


\begin{corollary}\label{Hseparab_AinB(H)} Let $\A$ be a commutative
 $C^*$-algebra  contained in ${\cal B}(H)$, where $H$ is a separable 
Hilbert space. Then the conditions {\rm (i), (ii), (iii), (iv), (v), (vi) and (vii)} of Theorem {\rm \ref{count_Suslin_number}} are equivalent.
\end{corollary}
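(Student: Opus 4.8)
The plan is to observe that the corollary is an immediate specialisation of Theorem \ref{count_Suslin_number}: that theorem already establishes the equivalence of conditions (i)--(vii) for any commutative $C^*$-algebra $\A = C_0(\widehat{\A})$ \emph{under the single standing hypothesis} that $\widehat{\A}$ satisfies the Suslin condition. Hence all the work in proving the corollary reduces to verifying that this hypothesis is automatically met in the present setting, where $\A$ is assumed to sit inside $\mathcal{B}(H)$ for a separable Hilbert space $H$.

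First I would invoke Lemma \ref{SuslinCond}. Since $\A$ is a commutative $C^*$-algebra contained in $\mathcal{B}(H)$ with $H$ separable, that lemma yields directly that the spectrum $\widehat{\A}$ satisfies the Suslin condition, i.e.\ $c(\widehat{\A}) = \aleph_0$. This is precisely the hypothesis needed to apply the preceding theorem, so no further topological analysis of $\widehat{\A}$ is required here; the separability of $H$ is used \emph{only} through this lemma.

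With the Suslin condition in hand, I would then apply Theorem \ref{count_Suslin_number} verbatim to this $\A$. Its conclusion states that conditions (i)--(vii) are mutually equivalent, which is exactly the assertion of the corollary. Thus the proof is a two-line deduction: Lemma \ref{SuslinCond} supplies the hypothesis, and Theorem \ref{count_Suslin_number} supplies the conclusion.

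Consequently there is no genuine obstacle at this stage; the mathematical content has already been absorbed into the two cited results. The only point worth stating carefully is that the separability of $H$ enters solely via Lemma \ref{SuslinCond} and is not needed again, so that the equivalences for $\A \subseteq \mathcal{B}(H)$ follow formally without re-examining paracompactness, $\sigma$-compactness, strict positivity, or the cohomological vanishing condition, each of which was already threaded into the chain of equivalences of Theorem \ref{count_Suslin_number}.
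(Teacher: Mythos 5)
Your proposal is correct and coincides exactly with the paper's own argument, which likewise cites Lemma \ref{SuslinCond} to verify the Suslin condition for $\widehat{\A}$ and then applies Theorem \ref{count_Suslin_number}. Nothing further is needed.
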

\begin{proof}
It follows from Theorem \ref{count_Suslin_number} and Lemma \ref{SuslinCond}.
  \end{proof}

\begin{lemma}\label{Omega-sep}~ {\rm \cite[Lemma 4.2]{Ly02}}
Let $\Omega$ be a separable Hausdorff locally compact space. Then the $C^*$-algebra $C_0(\Omega)$ is contained in ${\cal B}(H)$
for some separable Hilbert space $H$.
\end{lemma}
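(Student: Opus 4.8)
The plan is to realise $C_0(\Omega)$ concretely as an algebra of multiplication operators on a suitable $L^2$-space, exploiting separability of $\Omega$ to keep that Hilbert space separable. Since $\Omega$ is separable, the first step is to fix a countable dense subset $\{x_n : n \in \NN\}$ of $\Omega$. I would then build an isometric $*$-homomorphism $\pi$ of $C_0(\Omega)$ into $\B(H)$ for a separable Hilbert space $H$; its image is automatically a closed $*$-subalgebra of $\B(H)$ that is isometrically $*$-isomorphic to $C_0(\Omega)$, which is exactly the assertion that $C_0(\Omega)$ is contained in $\B(H)$.

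For the construction, consider the finite positive Borel measure
\[
\mu = \sum_{n=1}^{\infty} 2^{-n}\, \delta_{x_n}
\]
on $\Omega$, where $\delta_{x_n}$ denotes the point mass at $x_n$, and set $H = L^2(\Omega,\mu)$. Because $\mu$ is purely atomic with only countably many atoms, $H$ is isometrically isomorphic to a weighted $\ell^2(\NN)$ and is therefore separable. Define $\pi : C_0(\Omega) \to \B(H)$ by letting $\pi(f)$ be the multiplication operator $g \mapsto f g$. Each $\pi(f)$ is bounded with $\|\pi(f)\| = \|f\|_{L^\infty(\mu)}$, and the routine identities $\pi(fg) = \pi(f)\pi(g)$ and $\pi(\overline{f}) = \pi(f)^*$ show that $\pi$ is a $*$-homomorphism.

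The one point requiring argument is that $\pi$ is isometric. Since every atom of $\mu$ has strictly positive mass, there are no nontrivial $\mu$-null sets among the $x_n$, so the essential supremum computes as $\|f\|_{L^\infty(\mu)} = \sup_{n} |f(x_n)|$; it therefore suffices to show $\sup_n |f(x_n)| = \|f\|_\infty$ for every $f \in C_0(\Omega)$. The inequality $\le$ is clear. For the reverse, given $\varepsilon > 0$ pick $y \in \Omega$ with $|f(y)| > \|f\|_\infty - \varepsilon$; by continuity of $f$ the set $\{ x \in \Omega : |f(x)| > \|f\|_\infty - \varepsilon\}$ is open and non-empty, hence by density of $\{x_n\}$ it contains some $x_n$, which gives $\sup_n |f(x_n)| > \|f\|_\infty - \varepsilon$. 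Letting $\varepsilon \to 0$ yields $\sup_n |f(x_n)| \ge \|f\|_\infty$, so $\|\pi(f)\| = \|f\|_\infty$ and $\pi$ is an isometric $*$-isomorphism onto its image.

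The genuine obstacle here is topological rather than algebraic: separability of $\Omega$ is assumed \emph{without} metrizability, so the countable dense set $\{x_n\}$ is essentially the only combinatorial handle available. The construction above is designed so that this single set does double duty: it carries the measure, which is what forces $H$ to be separable, and it is dense, which is precisely what is needed for the elementary fact $\sup_n |f(x_n)| = \|f\|_\infty$ that secures faithfulness. I would expect verifying these two roles, namely separability of $L^2(\Omega,\mu)$ and the sup-norm identity, to be the only steps needing care, with everything else being formal.
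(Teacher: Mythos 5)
Your proof is correct. The paper does not reproduce an argument for this lemma but simply cites \cite[Lemma 4.2]{Ly02}, and the construction you give --- a purely atomic probability-type measure $\mu=\sum_n 2^{-n}\delta_{x_n}$ supported on a countable dense set, with $C_0(\Omega)$ acting by multiplication on the separable space $L^2(\Omega,\mu)$, and faithfulness secured by the identity $\sup_n|f(x_n)|=\|f\|_\infty$ --- is the standard proof of this fact and is essentially the one in the cited reference. All the steps you flag as needing care (separability of $L^2(\Omega,\mu)$ and the sup-norm identity) are handled correctly.
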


A Hausdorff topological space $X$ is {\it hereditarily paracompact} if each
 of its subspaces is  paracompact. By Stone's theorem, every metrizable 
topological space is paracompact \cite[5.1.3]{En}, and so is hereditarily paracompact.

\begin{example} \label{nonsepA} There exists a
nonseparable, hereditarily projective, commutative 
$C^*$-algebra $\A$ contained in ${\cal B}(H)$,
where $H$ is a separable Hilbert space.
\end{example}
\begin{proof} Let $ \Omega$ be a compact Hausdorff 
space. Recall that the commutative $C^*$-algebra
$C(\Omega)$ is separable if and only if $ \Omega$ is metrizable.
By \cite[Theorem 5]{He3} a commutative $C^*$-algebra
$C(\Omega)$ is hereditarily projective if and only if its spectrum 
$\Omega$ is hereditarily paracompact. 
Therefore, by Lemma \ref{Omega-sep}, it is enough to present 
a compact Hausdorff compact  space $ \Omega$ which is 
separable and hereditarily paracompact, but not metrizable. 
The topological space ``two arrows of Alexandrov" satisfies these
conditions. To describe the space,  consider two intervals
$X = [ 0, 1) $ and $X' = ( 0, 1] $ situated one above the other.
Let $\tilde{X} = X \cup X'$. A base for the topology of $\tilde{X}$ 
consists of all sets of the forms
$$ U = [ \alpha,\beta) \cup (\alpha',\beta')\;\; {\rm and}\;\;
V = (\alpha,\beta) \cup (\alpha',\beta'],$$
where $[ \alpha,\beta) \subset X$, while  $(\alpha',\beta')$
is the projection of $[ \alpha,\beta)$ on  $ X'$; and 
$(\alpha',\beta'] \subset X'$, while  $(\alpha,\beta)$
is the projection of $(\alpha',\beta']$ on  $ X$.
There is a description of properties of the  topological space 
$\tilde{X}$ in \cite[3.2.87]{AP}.
  \end{proof}

\begin{remark}\label{C_0(Omega,A)_strict_posit} {\rm Let $\Omega$ be $\sigma$-compact, and let $\A$  contain a strictly positive element. Then $C_0(\Omega, \A)$ contains a strictly positive element. Let  $h$ be a strictly positive element in $\A$ and $f$ be a strictly positive element in $C_0(\Omega)$. Then the element $a \in C_0(\Omega, \A)$ such that  $a(t) = f(t)h$, $t \in \Omega$,  is strictly positive. 
}
\end{remark}
\begin{lemma}\label{bai-in-C0(Omega,A)}  Let $\Omega$ be a Hausdorff locally compact space, and let $\A$ be a Banach algebra with a bounded (say by $C \ge 1$) approximate identity $e_{n}$, $n=1,2, \dots.$ Then,  for every $a \in C_{0}(\Omega,\A)$, $\| a - a e_{n} \|_{C_{0}(\Omega,\A)} \to 0$ as $n \to \infty$.
\end{lemma}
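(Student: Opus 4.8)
The plan is to unravel the supremum norm on $C_0(\Omega,\A)$ and reduce the claim to two facts: first, that the range of $a$ has compact closure in $\A$; and second, that a uniformly bounded sequence of operators converging pointwise to the identity converges uniformly on compact subsets of $\A$. Writing $R_n\colon \A\to\A$ for the bounded operator $x\mapsto x e_n$, we have $\|R_n\|\le C$ for all $n$, and, since $e_n$ is a (right) approximate identity, $\|R_n x - x\|=\|x-xe_n\|\to 0$ for every fixed $x\in\A$. Because $(a e_n)(t)=a(t)e_n=R_n(a(t))$, the quantity to control is
\[
\|a-ae_n\|_{C_0(\Omega,\A)}=\sup_{t\in\Omega}\|R_n(a(t))-a(t)\|_{\A}\le \sup_{x\in K}\|R_n x - x\|_{\A},
\]
where $K:=\overline{\{a(t):t\in\Omega\}}$. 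So it suffices to show that $K$ is compact and that $\sup_{x\in K}\|R_n x-x\|\to 0$.

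Next I would establish compactness of $K$. Since $a\in C_0(\Omega,\A)$, for each $\varepsilon>0$ the set $C_\varepsilon=\{t\in\Omega:\|a(t)\|\ge\varepsilon\}$ is compact, so its continuous image $a(C_\varepsilon)$ is a compact, hence totally bounded, subset of $\A$. For $t\notin C_\varepsilon$ we have $\|a(t)\|<\varepsilon$, so $a(t)$ lies in the $\varepsilon$-ball about $0$. Covering $a(C_\varepsilon)$ by finitely many $\varepsilon$-balls and adjoining the ball about $0$ exhibits $a(\Omega)$ as totally bounded; as $\A$ is complete, $K$ is compact.

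Finally I would invoke the standard principle that uniform boundedness together with pointwise convergence forces uniform convergence on compacta. Given $\varepsilon>0$, choose a finite $\delta$-net $x_1,\dots,x_m$ for $K$ with $\delta=\varepsilon/(3(C+1))$, and pick $N$ so large that $\|R_n x_j-x_j\|<\varepsilon/3$ for all $n\ge N$ and all $j$. For arbitrary $x\in K$, selecting $x_j$ with $\|x-x_j\|<\delta$ gives, for $n\ge N$,
\[
\|R_n x-x\|\le \|R_n\|\,\|x-x_j\|+\|R_n x_j-x_j\|+\|x_j-x\|\le (C+1)\delta+\tfrac{\varepsilon}{3}<\varepsilon.
\]
Hence $\sup_{x\in K}\|R_n x-x\|\to 0$, and combining with the displayed inequality above completes the argument. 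The only genuinely substantive step is the compactness of $K$; everything else is the routine $\varepsilon$-net argument, and the point to keep in mind is that it is precisely the vanishing at infinity of $a$ (not merely its continuity) that makes the range relatively compact and thereby converts the pointwise property of the approximate identity into the required uniform one.
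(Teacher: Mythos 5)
Your proof is correct and is essentially the paper's argument in a mildly repackaged form: both reduce to finitely many elements of $\A$ via compactness and then run the same three-term triangle inequality using the bound $C$. The only cosmetic difference is that the paper builds its finite net by covering a compact subset of $\Omega$ on which $a$ varies little (handling the complement by vanishing at infinity), whereas you transfer the compactness to the range $\overline{a(\Omega)}\subset\A$ and take a $\delta$-net there --- the same underlying estimate either way.
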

\begin{proof} Let  $\varepsilon > 0$. Since $a \in C_{0}(\Omega,\A)$, 
there is a compact subset $K \subset \Omega$ such that $\| a (t) - a (t) e_{n} \| < \varepsilon$ for all $ t \in \Omega \setminus K$ and all $n$.

The element $a$ is continuous on $\Omega$ and so, for each $t \in K$,  
there is a neighbourhood $U_t$ of $t$ such that $\| a(s) - a (t) \| < \frac{\varepsilon}{3C}$ for all $s \in U_t$. The subset $K$ is compact, and  therefore one can find a finite family $U_{t_i}$, $ i =1, \dots, m$, such that $ K \subset \bigcup_{i=1}^m U_{t_i}$.

By assumption, $e_{n}$, $n=1,2, \dots$ is  a bounded  approximate identity of $\A$. Thus, for every $t_i$, $ i =1, \dots, m$, there is $N_i \in \NN$
such that $\| a(t_i) - a(t_i) e_{n} \|_{\A} < \varepsilon/3$ for all $n \ge N_i$. Take $N = \max \{N_1, \dots, N_m\}$. Then, for every $s \in K$, $s$ belongs to one of $U_{t_k}$. Hence, for all $n \ge N$, 
\begin{align*}
\| a(s) - a(s) e_{n} \|_{\A} & \le  \| a(s) - a(t_k) \|_{\A} +\| a(t_k) - a(t_k) e_{n} \|_{\A}\\
 ~& \;\;\;\;\;\; + \| (a(t_k) - a(s)) e_{n} \|_{\A} \\
~& < \frac{\varepsilon}{3C} + \frac{\varepsilon}{3} + C \frac{\varepsilon}{3C} < \varepsilon.
\end{align*}
Thus $\| a - a e_{n} \|_{C_{0}(\Omega,\A)} = \sup_{s \in \Omega}\| a(s) - a(s) e_{n} \|_{\A} \to 0$ as $n \to \infty$.
  \end{proof}

Recall that, by \cite[Theorem 1]{AaKa}, a $C^*$-algebra 
$\A$ contains  a strictly positive element if and only if  
$\A$ has a countable increasing abelian approximate identity bounded by one.  Such $C^*$-algebras are called {\it $\sigma$-unital}.
By \cite[Proposition 12.7]{DW} any $C^*$-algebra with a countable left approximate identity contains a strictly positive element. It follows 
that the following theorem is contained in \cite[Proof of Theorem 1]{Ly2}. We will need the inequalities from Theorem \ref{separable-case} below. 

\begin{theorem}\label{separable-case} Let $\A$ be a  $C^*$-algebra with a
countable left approximate identity. Then

{\rm (i)} there is a  countable increasing  approximate identity $u_n$, $n=0,\dots,$ ($u_0=0$) bounded by one such that, for any $a \in \A$, any sequence $(\eta_i)_{i \ge 1} \subset \CC$ with $|\eta_i| = 1$  and  any integers $n,m$ such that $m > n \ge 0$,
\begin{equation}\label{prop1}
\left\| \sum_{i=n+1}^{m} \eta_i
\sqrt{u_i -u_{i-1}} \right\|_{\A} \le 4,
\end{equation}
\begin{equation}\label{prop2}
\left\| a  \sum_{i=n+1}^{m} \eta_i
\sqrt{u_i -u_{i-1}} \right\|_{\A} \le 4 \| a \|_{\A}
\end{equation}
and
\begin{equation}\label{prop3}
\left\| a  \sum_{i=n+1}^{m} \eta_i
\sqrt{u_i -u_{i-1}} \right\|^2_{\A} \le 10\max_{n \le i \le m} \| a u_i - a \|_{\A}\| a \|_{\A} + \frac{1}{2^{2n-1}} \| a \|_{\A}^2;
\end{equation}

{\rm (ii)} the map 
\[
\rho_{\A}: \A \rightarrow \A \widehat{\otimes} \A: a \mapsto
\sum_{k=1}^{\infty}
a \;\sqrt{u_k -u_{k-1}}  \otimes  \; \sqrt{u_k -u_{k-1}} 
\]
is a morphism of left  Banach  $\A$-modules 
such that $\pi_{\A} \circ \rho = {\rm id}_{\A} $ and $\|\rho\|\le 16$, and therefore $\A$ is left  projective.

\end{theorem}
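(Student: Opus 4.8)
The plan is to realise the required approximate identity through the continuous functional calculus of a single strictly positive element, to arrange its spectral ``gaps'' to be almost orthogonal, and then to feed the resulting uniform estimates into the root-of-unity bound for the projective tensor norm already exploited in Proposition \ref{Ax-e-A-projective}.

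First I would produce the building block. By \cite[Proposition 12.7]{DW} a $C^*$-algebra with a countable left approximate identity contains a strictly positive element $h$, which after scaling satisfies $0 \le h \le 1$. Fix a strictly decreasing sequence $t_0 = 1 > t_1 > t_2 > \cdots \to 0$ and let $g_n \colon [0,1] \to [0,1]$ be the continuous function that is $0$ on $[0,t_{n+1}]$, equal to $1$ on $[t_n,1]$ and affine in between, with $g_0 = 0$. Setting $u_n = g_n(h)$ gives an increasing, abelian approximate identity bounded by one with $u_0 = 0$; since $h$ is strictly positive and $g_n \nearrow 1$ pointwise on $(0,1]$, the net $(u_n)$ is a two-sided approximate identity, self-adjointness forcing the one-sided and two-sided conditions to coincide. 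The point of this choice is that, writing $v_i = g_i - g_{i-1} \ge 0$, the support of $v_i$ is $[t_{i+1},t_{i-1}]$, so at every point at most two \emph{consecutive} functions $v_i$ are non-zero.

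Next I would establish the three inequalities of (i). Put $p_i = \sqrt{u_i - u_{i-1}} = \sqrt{v_i}(h)$. For unimodular scalars $\eta_i$ the element $b = \sum_{i=n+1}^m \eta_i p_i = B(h)$ is a function of $h$, with $B = \sum_i \eta_i\sqrt{v_i}$. Because at each point at most two consecutive terms are non-zero, $|B(t)|^2 \le 2\big(v_k(t)+v_{k+1}(t)\big) \le 2(g_m-g_n)(t) \le 2$; taking suprema gives $\|b\| \le \sqrt{2} \le 4$, which is \eqref{prop1}, and \eqref{prop2} follows by submultiplicativity. For \eqref{prop3} the same pointwise estimate yields the operator inequality $b b^{*} = |B|^2(h) \le 2(u_m - u_n)$, whence
\[
\Big\| a\sum_{i=n+1}^m \eta_i p_i \Big\|^2 = \|a\,bb^{*}a^{*}\| \le 2\,\|a(u_m-u_n)a^{*}\|.
\]
The factor $u_m - u_n$ is handled by the telescoping identity $a(u_m-u_n)a^{*} = (au_m - a)a^{*} - (au_n - a)a^{*}$, giving $\|a(u_m-u_n)a^{*}\| \le 2\max_{n \le i \le m}\|au_i - a\|\,\|a\|$ and hence a bound of the form $4\max_{n\le i\le m}\|au_i-a\|\,\|a\|$, which is stronger than \eqref{prop3}.

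Finally, for (ii) I would show the series defining $\rho$ is Cauchy. Reindexing the tail $\sum_{k=n+1}^m a p_k \otimes p_k$ and applying \cite[Theorem II.2.44]{He4} with a primitive $(m-n)$-th root of unity $\xi$ in the role of the scalars $\eta_i$, each factor $\big\|a\sum_k \xi^{l(k-n)}p_k\big\|$ is controlled by \eqref{prop3} and each factor $\big\|\sum_k \xi^{-l(k-n)}p_k\big\|$ by \eqref{prop1}; since neither bound depends on $l$, averaging over $l$ gives
\[
\Big\|\sum_{k=n+1}^m a p_k \otimes p_k\Big\|_{\A \widehat{\otimes}\A} \le 4\Big(10\max_{n\le i\le m}\|au_i-a\|\,\|a\| + \tfrac{1}{2^{2n-1}}\|a\|^2\Big)^{1/2}.
\]
As $n \to \infty$ the right-hand side tends to $0$, since $\|au_i - a\| \to 0$, so the partial sums converge; taking $n=0$, where $u_0 = 0$ forces $\|a\,bb^{*}a^{*}\| \le 2\|a\|^2$, yields $\|\rho\| \le 16$. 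That $\rho$ is a morphism of left $\A$-modules is immediate from the left multiplication, and $\pi_{\A}\circ\rho(a) = \lim_m \sum_{k=1}^m a(u_k-u_{k-1}) = \lim_m a u_m = a$ since $(u_m)$ is a right approximate identity; hence $\A$ is left projective. The main obstacle is inequality \eqref{prop3}: one needs a bound on $\big\|a\sum\eta_i p_i\big\|$ that is \emph{uniform} over all unimodular $\eta_i$ and that decays as the starting index grows, and it is precisely the almost-orthogonality of the spectral gaps $v_i$ together with the telescoping identity that supplies this — without it the root-of-unity estimate would not produce a vanishing tail.
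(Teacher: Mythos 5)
Your argument is correct, and it in fact proves the theorem with better constants than stated. Note that the paper itself offers no written proof of this result: it records only that the statement ``is contained in'' the proof of Theorem 1 of \cite{Ly2}, which (following \cite{PhR}) starts from a countable increasing abelian approximate identity supplied by \cite{AaKa} and passes to a rapidly converging subsequence so that the square roots $\sqrt{u_i-u_{i-1}}$ become only \emph{approximately} orthogonal, with non-adjacent cross terms of geometric size; expanding $\bigl\|\sum_i\eta_i\sqrt{u_i-u_{i-1}}\bigr\|^2$ and summing those tails is what produces the constants $4$ and $10$ and the correction term $\tfrac{1}{2^{2n-1}}\|a\|^2$ in \eqref{prop3}. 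You share the same skeleton (telescoped square roots, the root-of-unity bound for the projective tensor norm, a tail estimate that vanishes because $(u_i)$ is also a right approximate identity), but you replace the subsequence extraction by an explicit functional-calculus construction $u_n=g_n(h)$ from a strictly positive element, with the gap functions $v_i=g_i-g_{i-1}$ supported in $[t_{i+1},t_{i-1}]$ so that at each spectral point at most two consecutive $v_i$ are nonzero. This buys exact orthogonality of non-adjacent gaps: \eqref{prop1} improves to $\sqrt2$, \eqref{prop3} improves to $4\max_{n\le i\le m}\|au_i-a\|\,\|a\|$ with no correction term, and $\|\rho\|\le 2$ rather than $16$. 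The price is that the approximate identity must be built from scratch rather than refined from a given one, which is harmless here since \cite[Proposition 12.7]{DW} supplies the strictly positive element. The individual steps all check out: the support computation giving $i\in\{k,k+1\}$, the Cauchy--Schwarz step $|B(t)|^2\le 2(v_k+v_{k+1})(t)$, the operator inequality $bb^*\le 2(u_m-u_n)$, the telescoping bound for $\|a(u_m-u_n)a^*\|$, and the Cauchy criterion for the partial sums of $\rho(a)$.
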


\begin{proposition}\label{C_0(omega,A-strict-posit)} Let $\Omega$ be a Hausdorff locally compact space, and let $\A$ be a C*-algebra which contains a strictly positive element. Suppose that  $\Omega$ is paracompact. Then   $C_{0}(\Omega,\A)$ is left projective.
\end{proposition}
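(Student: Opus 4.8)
The plan is to fuse the partition-of-unity construction of Proposition \ref{Ax-e-A-projective} with the approximate-identity splitting of Theorem \ref{separable-case}. Write $B = C_0(\Omega,\A)$. Since $\A$ contains a strictly positive element, Theorem \ref{separable-case} supplies a countable increasing approximate identity $u_n$ ($u_0=0$) bounded by one and satisfying the inequalities (\ref{prop1})--(\ref{prop3}). Since $\Omega$ is paracompact and locally compact, \cite[Lemma 2.1]{He3} provides a locally finite open cover $\{U_\mu\}_{\mu\in\Lambda}$ by relatively compact sets such that each point of $\Omega$ lies in at most three of the $U_\mu$, together with a subordinate partition of unity $\{g_\mu\}$ with $0\le g_\mu\le 1$, $g_\mu=0$ off $U_\mu$ and $\sum_\mu g_\mu=1$, exactly as in Proposition \ref{Ax-e-A-projective}. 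For each $\mu,k$ the fields $a\sqrt{g_\mu}\sqrt{u_k-u_{k-1}}$ and $\sqrt{g_\mu}\sqrt{u_k-u_{k-1}}$ are continuous and compactly supported, hence lie in $B$, and I would propose the splitting
\[
\rho(a)=\lim_\lambda\sum_{\mu\in\lambda}\sum_{k=1}^{\infty}\bigl(a\sqrt{g_\mu}\sqrt{u_k-u_{k-1}}\bigr)\otimes\bigl(\sqrt{g_\mu}\sqrt{u_k-u_{k-1}}\bigr),
\]
the outer limit being over the finite subsets $\lambda$ of $\Lambda$ ordered by inclusion.

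The core of the argument is convergence and boundedness of this double sum. For finite $\lambda$ and integers $0\le n<m$, I would estimate the projective tensor norm of the block $\sum_{\mu\in\lambda}\sum_{k=n+1}^{m}(\,\cdot\,)\otimes(\,\cdot\,)$ by applying the root-of-unity averaging inequality \cite[Theorem II.2.44]{He4} to the joint index set $\lambda\times\{n+1,\dots,m\}$. Evaluating the resulting one-variable sums at a point $t\in\Omega$, only the at most three indices $\mu$ with $g_\mu(t)\neq 0$ contribute, and for each such $\mu$ the scalar $\sqrt{g_\mu(t)}\in[0,1]$ factors out, leaving the $\A$-valued sums $a(t)\sum_k\eta_k\sqrt{u_k-u_{k-1}}$ and $\sum_k\eta_k\sqrt{u_k-u_{k-1}}$ to which (\ref{prop2}) and (\ref{prop1}) apply. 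Since $\sum_\mu\sqrt{g_\mu(t)}\le\sqrt 3$, this bounds the block by a fixed constant times $\|a\|_B$, uniformly in $\lambda,n,m$, giving $\|\rho\|<\infty$. For convergence of the inner $k$-series I would instead feed (\ref{prop3}) into the same scheme: the quantity $\max_{n\le i\le m}\|au_i-a\|_B$ there tends to $0$ as $n\to\infty$ by Lemma \ref{bai-in-C0(Omega,A)}, and crucially this estimate is uniform in $t$ and in $\lambda$, so each inner series converges with a tail bound independent of $\lambda$.

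For the outer limit over $\lambda$ I would reuse the vanishing-at-infinity argument of Proposition \ref{Ax-e-A-projective}: given $a\in B$ and $\varepsilon>0$ there is a compact $K$ with $\|a(t)\|<\varepsilon$ off $K$, and only finitely many $U_\mu$ meet $K$, so $\|a\sqrt{g_\mu}\|_B\le\varepsilon$ for all $\mu$ outside a finite set. Bounding the block $\sum_{\mu\in\lambda'\setminus\lambda}(\,\cdot\,)\otimes(\,\cdot\,)$ by the same averaging argument, now retaining $\max_\mu\|a\sqrt{g_\mu}\|_B$ in place of $\|a\|_B$, shows the net $\{\rho_\lambda(a)\}$ is Cauchy in $B\widehat{\otimes}B$, so $\rho$ is well defined and bounded. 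That $\rho$ is a morphism of left $B$-modules is immediate from $c\cdot(a\sqrt{g_\mu}\sqrt{u_k-u_{k-1}})=(ca)\sqrt{g_\mu}\sqrt{u_k-u_{k-1}}$ together with the established convergence. Finally $\pi_B\circ\rho=\mathrm{id}_B$: since $\sqrt{g_\mu}\sqrt{u_k-u_{k-1}}$ is pointwise a scalar multiple of $\sqrt{u_k-u_{k-1}}$, multiplying the two tensor legs telescopes over $k\le N$ to $ag_\mu u_N$, which tends to $ag_\mu$ by Lemma \ref{bai-in-C0(Omega,A)}; summing over $\mu$ and using $\sum_\mu g_\mu=1$ (again with the compact-set/vanishing-at-infinity estimate) gives $\pi_B(\rho(a))=a$. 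Hence $B=C_0(\Omega,\A)$ is left projective.

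The step I expect to be the main obstacle is the combined norm estimate of the second paragraph: one must run Helemskii's root-of-unity averaging over the joint index $(\mu,k)$ while simultaneously exploiting the at-most-three property of the cover on the $\mu$-side and the inequalities (\ref{prop1})--(\ref{prop3}) on the $k$-side, and verify that the tail control coming from (\ref{prop3}) and Lemma \ref{bai-in-C0(Omega,A)} is uniform both in the point $t\in\Omega$ and in the finite set $\lambda$.
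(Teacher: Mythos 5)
Your proposal is correct and follows essentially the same route as the paper: the same splitting element $\sum_{\mu}\sum_{k} a\sqrt{g_\mu}\sqrt{u_k-u_{k-1}}\otimes\sqrt{g_\mu}\sqrt{u_k-u_{k-1}}$ built from Helemskii's order-three cover and the approximate identity of Theorem \ref{separable-case}, with convergence controlled by the root-of-unity averaging of \cite[Theorem II.2.44]{He4} together with the inequalities (\ref{prop1})--(\ref{prop3}) and Lemma \ref{bai-in-C0(Omega,A)}. The only difference is presentational: the paper delegates the joint $(\mu,k)$ convergence estimate to Lemma \ref{u_a_lambda_n-converges}, proved in the more general setting of Theorem \ref{cont-field-strict-posit}, whereas you carry it out directly for the constant field.
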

\begin{proof}
By \cite[Lemma 2.1]{He3}, for any paracompact locally compact space $\Omega$ there exists an open cover $\U$ by relatively compact sets such that each point in $\Omega$ has a neighbourhood which intersects no more than three sets in $\U$.
By \cite[Problem 5.W]{Ke}, since $\U=\{U_\mu \}_{\mu \in \Lambda}$ is a locally finite open cover of the normal space $\Omega$, it is possible to select a non-negative continuous function $g_{\mu}$ for each $U_{\mu}$ in $\U$ such that $g_{\mu}$ is $0$ outside $U_{\mu}$ and is everywhere less than or equal to one, and 
\[
\sum_{\mu \in \Lambda}  g_{\mu}(s) =1\;\; \text{for all } \;\; s \in \Omega.
\]
By assumption $\A$ has a strictly positive element and so has
 a countable increasing  approximate identity bounded by one.
By Theorem \ref{separable-case}, there is  a  countable  increasing  approximate identity  $u_n$, $n=0,\dots,$ bounded by one in $\A$ which satisfies inequalities (\ref{prop1}), (\ref{prop2}) and (\ref{prop3}).

For $a \in C_0(\Omega, \A)$, $n \in \NN$ and  $\lambda \in N(\Lambda)$, define 
\[
u_{a,\lambda,n} = \sum_{\mu \in \lambda}\sum_{k=1}^{n}
a \sqrt{g_\mu}\;\sqrt{u_k -u_{k-1}}  \otimes \sqrt{ g_\mu} \; \sqrt{u_k -u_{k-1}} . 
\]
In Lemma \ref{u_a_lambda_n-converges} below we will prove in a more general case that, for any $a \in C_0(\Omega, \A)$, the net $u_{a,\lambda,n} $ converges in $ C_0(\Omega, \A) \widehat{\otimes}C_0(\Omega, \A)$.
Let us define $\rho: C_0(\Omega, \A) \rightarrow C_0(\Omega, \A) \widehat{\otimes}C_0(\Omega, \A)$ by setting, for every $a \in C_0(\Omega, \A)$,
\[
\rho(a) = \lim_\lambda \lim_{n \to \infty} u_{a,\lambda,n}.
\]
Thus, in view of the completeness of $C_0(\Omega, \A) \widehat{\otimes}C_0(\Omega, \A)$, the map $\rho$ is well defined and it is clear from the definition that  $\rho$ is  a morphism of left  Banach  $C_0(\Omega, \A)$-modules and $\|\rho\|\le 9 \times 16$. 
By Lemma \ref{bai-in-C0(Omega,A)}, for every $a \in C_0(\Omega, \A)$,
\begin{align*}
(\pi_{C_0(\Omega, \A)} \circ \rho)(a) &=\lim_\lambda \lim_{n \to \infty} \left(\sum_{\mu \in \lambda}\sum_{k=1}^{n}
a g_\mu\;\left(u_k -u_{k-1} \right) \right)\\
&= \lim_\lambda \sum_{\mu \in \lambda} \lim_{n \to \infty} a g_\mu\;u_n \\
&= \lim_\lambda \sum_{\mu \in \lambda}  a g_\mu = a.
\end{align*}
Thus $\pi_{C_0(\Omega, \A)} \circ \rho = {\rm id}_{C_0(\Omega, \A)} $.
Therefore   $C_{0}(\Omega,\A)$ is left projective.
  \end{proof}

Furthermore we will need  \cite[Theorem II.2.44]{He4} in a more general form.

\begin{lemma}\label{diagonal-element} Let $E, F$ be Banach spaces.  Suppose an element $u \in E \widehat{\otimes} F$ can be presented as 
\[ u = \sum_{l =1}^m \sum_{k=1}^{n} x^{\mu_l}_k \otimes y^{\mu_l}_k,
\]
$\xi$ is a primary $m$-th root of $1$ and $\eta$ is a primary $n$-th root of $1$ in $\CC$.
Then 
\[ 
\|u\| \le \frac{1}{m n} \sum_{l=1}^{m}\sum_{k=1}^{n} 
\left\|\sum_{t=1}^{m} \sum_{i=1}^{n} \xi^{l t} \eta^{k i} x^{\mu_t}_i \right\| \left\|\sum_{s=1}^{m} \sum_{j=1}^{n} \xi^{-l s} \eta^{-k j} y^{\mu_s}_j\right\|.
\]
\end{lemma}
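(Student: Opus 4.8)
The plan is to prove the inequality by the classical roots-of-unity averaging argument that underlies \cite[Theorem II.2.44]{He4}, now carried out simultaneously in the two index variables $l$ and $k$. For each pair $(l,k)$ with $1 \le l \le m$ and $1 \le k \le n$, I would introduce the two averaged elements
\[
X_{l,k} = \sum_{t=1}^{m}\sum_{i=1}^{n} \xi^{lt}\eta^{ki}\, x^{\mu_t}_i \in E, \qquad
Y_{l,k} = \sum_{s=1}^{m}\sum_{j=1}^{n} \xi^{-ls}\eta^{-kj}\, y^{\mu_s}_j \in F,
\]
so that the right-hand side of the asserted inequality is exactly $\tfrac{1}{mn}\sum_{l,k} \|X_{l,k}\|\,\|Y_{l,k}\|$.

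The key step is to establish the reconstruction identity
\[
\frac{1}{mn}\sum_{l=1}^{m}\sum_{k=1}^{n} X_{l,k}\otimes Y_{l,k} = u
\]
in $E \widehat{\otimes} F$. To do this I would expand the tensor product $X_{l,k}\otimes Y_{l,k}$ as the quadruple sum $\sum_{t,i,s,j}\xi^{l(t-s)}\eta^{k(i-j)}\, x^{\mu_t}_i \otimes y^{\mu_s}_j$ and then sum over $l$ and $k$. The orthogonality relation $\sum_{l=1}^{m}\xi^{l(t-s)} = m$ when $t=s$ and $0$ otherwise (valid because $t,s \in \{1,\dots,m\}$ forces the congruence $t \equiv s \pmod m$ to mean $t=s$), together with the analogous relation for $\eta$, collapses the quadruple sum to the diagonal $mn\sum_{t=1}^{m}\sum_{i=1}^{n} x^{\mu_t}_i\otimes y^{\mu_t}_i$, which is precisely $mn\,u$ after relabelling the summation indices $t\mapsto l$, $i\mapsto k$.

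Finally, taking norms in $E \widehat{\otimes} F$, applying the triangle inequality, and using the estimate $\|x\otimes y\|_{E\widehat{\otimes}F} \le \|x\|\,\|y\|$ for the projective tensor norm yields the claim. The only point requiring care is the index bookkeeping ensuring that the double orthogonality sum isolates exactly the diagonal terms $t=s$ and $i=j$; once that is verified, the remainder is routine, so I do not expect any genuine obstacle beyond keeping the four summation indices straight.
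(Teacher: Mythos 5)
Your proposal is correct and follows essentially the same route as the paper's own proof: the authors likewise form the averaged element $v = \frac{1}{mn}\sum_{l,k} X_{l,k}\otimes Y_{l,k}$, bound its projective tensor norm by $\frac{1}{mn}\sum_{l,k}\|X_{l,k}\|\,\|Y_{l,k}\|$, and then use the orthogonality relations $\sum_{l=1}^{m}\xi^{l(t-s)} = m\delta^t_s$ and $\sum_{k=1}^{n}\eta^{k(i-j)} = n\delta^i_j$ to collapse the quadruple sum and identify $v$ with $u$. No gaps.
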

\begin{proof} We consider the following element $v$ in $E \widehat{\otimes} F$ 
\[
v = \frac{1}{m n} \sum_{l=1}^{m}\sum_{k=1}^{n} 
\left(\sum_{t=1}^{m} \sum_{i=1}^{n} \xi^{l t} \eta^{k i} x^{\mu_t}_i \right)\otimes \left(\sum_{s=1}^{m} \sum_{j=1}^{n} \xi^{-l s} \eta^{-k j} y^{\mu_s}_j\right).
\]
By the definition of the norm in $E \widehat{\otimes} F$,
\[
\|v\| \le \frac{1}{m n} \sum_{l=1}^{m}\sum_{k=1}^{n} 
\left\|\sum_{t=1}^{m} \sum_{i=1}^{n} \xi^{l t} \eta^{k i} x^{\mu_t}_i \right\| \left\|\sum_{s=1}^{m} \sum_{j=1}^{n} \xi^{-l s} \eta^{-k j} y^{\mu_s}_j\right\|.
\]
Note that  $$\lambda_{t s} = \sum_{l=1}^{m} \xi^{l t} \xi^{-l s} =\sum_{l=1}^{m} \xi^{l( t- s)} = 
m \delta^t_s$$ and
$$\gamma_{i j} = \sum_{k=1}^{n} \eta^{k i}\eta^{-k j} = \sum_{k=1}^{n} \eta^{k(i-j)} = 
 n \delta^i_j$$
where $\delta^t_s$ is the Kronecker symbol. Therefore, we have
\begin{align*}
 v&= \frac{1}{m n} \sum_{l=1}^{m} \sum_{k=1}^{n} 
\left(\sum_{t=1}^{m} \sum_{i=1}^{n} \xi^{l t} \eta^{k i} x^{\mu_t}_i \right)\otimes \left(\sum_{s=1}^{m} \sum_{j=1}^{n} \xi^{-l s} \eta^{-k j} y^{\mu_s}_j\right)\\
 ~&=  \sum_{t,s=1}^{m} \frac{1}{m}  \lambda_{t s}\;
\sum_{i,j =1}^{n}   \frac{1}{ n} \gamma_{i j}\;
x^{\mu_t}_i \otimes y^{\mu_s}_j \\
~& =\sum_{l=1}^{m} \sum_{k=1}^{n} x^{\mu_l}_k \otimes y^{\mu_l}_k =u.
\end{align*}
  \end{proof}

\begin{theorem}\label{cont-field-strict-posit}  Let $\Omega$ be a Hausdorff locally compact space, let 
 $\mathcal{U}=\{\Omega, A_{x}, \Theta \}$ be a locally trivial continuous field of $C^*$-algebras where each $A_{x}$ contains a strictly positive element, let $\ell \in \NN$, and let the $C^*$-algebra $\A$ be defined by $\mathcal{U}$. Suppose $\Omega$ is paracompact and one of the following conditions is satisfied:\\
{\rm (i)} the topological dimension  $\dim \Omega$ of $\Omega$ is finite and no greater than  $\ell$, or\\
{\rm (ii)}~ $\mathcal{U}$ is $\ell$-locally trivial.\\
Then  $\A$ is left projective. 
\end{theorem}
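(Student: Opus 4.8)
The plan is to generalise the construction of Proposition \ref{C_0(omega,A-strict-posit)} from the constant field to the locally trivial one, replacing the single fibre algebra and its approximate identity by the locally varying data furnished by local triviality, and using the hypothesis on $\dim\Omega$, respectively $\ell$-local triviality, to keep the number of patches meeting any point bounded. The engine is Theorem \ref{separable-case}: on each open set where $\mathcal{U}$ is trivial the fibre is a fixed $C^*$-algebra with a strictly positive element, to which the approximate-identity retraction of norm at most $16$ applies, and these local retractions are glued by a partition of unity, the gluing being controlled by Lemma \ref{diagonal-element} precisely because the chosen cover has bounded multiplicity.

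The production of this cover is the first, topological, step, and is where the two hypotheses enter. Since $\Omega$ is locally compact Hausdorff, hence regular \cite[Theorem 5.17]{Ke}, and $\mathcal{U}$ is locally trivial, every point has a relatively compact open neighbourhood on which $\mathcal{U}$ is trivial; these form an open cover $\V$. Under (i), paracompactness gives a locally finite refinement of $\V$, and the bound $\dim\Omega\le\ell$ then gives, by Dowker's theorem as recalled in Section \ref{definitions}, a further locally finite open refinement of multiplicity at most $r=\ell+1$; being a refinement of $\V$, each of its members is relatively compact and trivialises $\mathcal{U}$. Under (ii), $\ell$-local triviality supplies a cover $\{V_1,\dots,V_\ell\}$ of cardinality $\ell$ whose closures lie in trivialising sets, hence of multiplicity at most $\ell$; decomposing $\Omega$ into open-closed $\sigma$-compact pieces (Remark \ref{parac-disjoint-union}) and taking on each piece a relatively compact exhaustion whose associated annular cover has multiplicity two, I would intersect with $\{V_j\}$ to obtain, globally, a locally finite relatively compact trivialising cover of multiplicity $r\le 2\ell$. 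In either case write $\{U_\mu\}_{\mu\in\Lambda}$ for the resulting cover, $r$ for its multiplicity, and choose, by normality \cite[Theorem 5.1.5]{En} and \cite[Problem 5.W]{Ke}, a subordinate partition of unity $\{g_\mu\}$ with $0\le g_\mu\le1$, $\mathrm{supp}\,g_\mu\subset U_\mu$ and $\sum_\mu g_\mu=1$.

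On each $U_\mu$ fix an isometric $*$-isomorphism of $\mathcal{U}|_{U_\mu}$ onto the constant field with fibre $A^\mu$. As each $A_x$, hence $A^\mu$, has a strictly positive element, $A^\mu$ has a countable increasing approximate identity bounded by one \cite[Theorem 1]{AaKa}, and Theorem \ref{separable-case} supplies such a $u^\mu_n$ obeying (\ref{prop1})--(\ref{prop3}). Writing $e^\mu_k=\sqrt{u^\mu_k-u^\mu_{k-1}}$ and pulling these constant fields back through the trivialisation, the fields $\sqrt{g_\mu}\,e^\mu_k$ lie in $\A$: they are continuous by Property (iv) of Definition \ref{cont-fields} and have support in the relatively compact set $U_\mu$, so vanish at infinity. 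I would then set, for $a\in\A$, $\lambda\in N(\Lambda)$ and $n\in\NN$,
\[
u_{a,\lambda,n}=\sum_{\mu\in\lambda}\sum_{k=1}^{n} a\,\sqrt{g_\mu}\,e^\mu_k\otimes \sqrt{g_\mu}\,e^\mu_k,
\]
and define $\rho(a)=\lim_\lambda\lim_{n\to\infty}u_{a,\lambda,n}$.

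The heart of the argument is the convergence of this double net in $\A\widehat{\otimes}\A$ together with a multiplicity-controlled bound on $\|\rho\|$; this is the general form of the convergence lemma invoked in Proposition \ref{C_0(omega,A-strict-posit)}. The spatial ($\mu$) sum is handled by Lemma \ref{diagonal-element}: at each point at most $r$ indices $\mu$ contribute, so applying the lemma with $m\le r$ and estimating the inner factors by (\ref{prop1}) and (\ref{prop2}) gives a bound $\|u_{a,\lambda'',n}-u_{a,\lambda',n}\|\le r^2 c\,\max_{\mu\notin\lambda}\|a\sqrt{g_\mu}\|_{\A}$, which is small once $\lambda$ contains the finitely many patches meeting $a$ off a compact set, because $\|a(t)\|\to0$ at infinity. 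The tail in $n$ is controlled by inequality (\ref{prop3}): since $a\sqrt{g_\mu}$ lies, fibrewise, in the algebra where $u^\mu_n$ is an approximate identity, $\|a\sqrt{g_\mu}\,u^\mu_n-a\sqrt{g_\mu}\|_{\A}\to0$ uniformly by an argument as in Lemma \ref{bai-in-C0(Omega,A)}, and (\ref{prop3}) then forces the $n$-tails to vanish. These two estimates make $u_{a,\lambda,n}$ Cauchy and yield $\|\rho\|\le r^2\cdot16$. That $\rho$ is a morphism of left $\A$-modules is clear from its form, and $\pi_{\A}\circ\rho=\mathrm{id}_{\A}$ follows from $\sum_k(u^\mu_k-u^\mu_{k-1})=\lim_n u^\mu_n$ acting as an approximate identity together with $\sum_\mu g_\mu=1$, exactly as in Proposition \ref{C_0(omega,A-strict-posit)}. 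I expect the principal obstacle to be organising the two limits so that the multiplicity bound $r$ and the estimates (\ref{prop1})--(\ref{prop3}) interlock to give simultaneously the convergence of the net and a bound on $\|\rho\|$ uniform in the approximate-identity parameter; the secondary difficulty is the purely topological manufacture, under hypothesis (ii), of a relatively compact trivialising cover of bounded multiplicity.
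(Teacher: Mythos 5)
Your proposal is correct and follows essentially the same route as the paper's proof: a locally finite trivialising cover of bounded multiplicity (via Dowker's theorem under (i), via a relatively compact bounded-multiplicity cover intersected with the $\ell$ trivialising sets under (ii)), a subordinate partition of unity, the approximate identities and inequalities (\ref{prop1})--(\ref{prop3}) of Theorem \ref{separable-case} on each patch, and convergence of the double net $u_{a,\lambda,n}$ controlled by Lemma \ref{diagonal-element} together with Lemma \ref{bai-in-C0(Omega,A)}. The only deviations are cosmetic: your multiplicity constant ($\ell+1$, resp.\ $2\ell$) differs slightly from the paper's ($\ell$, resp.\ $3\ell$), and you make explicit the relative compactness of the patches needed to place $\sqrt{g_\mu}\,e^\mu_k$ in $\A$.
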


\begin{proof} (i) By assumption, $\mathcal{U}$ is locally trivial and so, for each $s \in \Omega$, there is an open neighbourhood $U_{s}$ of $s$ such that $\mathcal{U}|_{U_{s}}$ is trivial. Since $\Omega$ is paracompact,
the open cover $\{U_{s} \}_{s \in \Omega} $ of $\Omega$ has an open locally finite refinement $\{W_{\nu} \}$ that is also a cover of $\Omega$.

By  \cite[Theorem 5.1.5]{En},  paracompactness of $\Omega$ implies that  $\Omega$ is a normal topological space. By \cite[Theorem 7.2.4]{En}, for the normal space $\Omega$, the topological dimension
$\dim \Omega \le \ell$ implies that the locally finite open cover $\{W_{\nu} \}$ of $\Omega$ possesses an open locally finite refinement $\{V_{\mu} \}_{\mu \in \Lambda}$ of order $\ell$ that is also a cover of $\Omega$.

Let $\phi^\mu = (\phi^\mu_x)_{x \in V_{\mu}}$  be an isomorphism of $\mathcal{U}|_{V_{\mu}}$  onto the trivial continuous field of $C^*$-algebras over $V_{\mu}$  where, for each $x \in \Omega$, $\phi^\mu_x$ is an  isomorphism of $C^*$-algebras $A_x \cong \tilde{A}_{\mu}$.

By assumption $\tilde{A}_{\mu}$ has a strictly positive element and so has
 a countable  increasing  approximate identity bounded by one.
By Theorem \ref{separable-case}, there is  a  countable  increasing  approximate identity  $u^{\mu}_n$, $n=0,\dots,$ in $\tilde{A}_{\mu}$ bounded by one which satisfies inequalities (\ref{prop1}), (\ref{prop2}) and (\ref{prop3}).

By \cite[Problem 5.W]{Ke}, since $\{V_{\mu} \}_{\mu \in \Lambda}$ is a locally finite open cover of the normal space $\Omega$, it is possible to select a non-negative continuous function $f_{\mu}$ for each $V_{\mu}$ in $\{V_{\mu} \}_{\mu \in \Lambda}$ such that $f_{\mu}$ is $0$ outside $V_{\mu}$ and is everywhere less than or equal to one, and 
\[
\sum_{\mu \in \Lambda}  f_{\mu}(s) =1\;\; \text{for all } \;\; s \in \Omega.
\]
Note that in the equality $\sum_{\mu \in \Lambda}  f_{\mu}(s) =1$, for any 
$ s \in \Omega$, there are no more than $\ell$ nonzero terms.

\begin{lemma}\label{u_a_lambda_n}  For any $a \in \A$ and  for any $\lambda = \{\mu_1, \dots,  \mu_N\}$, 
\begin{equation}\label{prop-1-general}
\left\|\sum_{p=1}^{N} \sum_{i=1}^{\tilde{n}} \xi^{l p} \eta^{k i}
a \sqrt{f_{\mu_p}}\;(\phi^{\mu_p})^{-1}\left(\sqrt{u_i^{\mu_p} -u_{i-1}^{\mu_p}}\right) \right\|_{\A} \le  4 \ell \max_{1\le p\le N}\left\| a \sqrt{f_{\mu_p}}\right\|_{\A}
\end{equation}
and
\begin{equation}\label{prop-2-general}
\left\|\sum_{q=1}^{N} \sum_{j=1}^{\tilde{n}} \xi^{-l q} \eta^{-k j} 
\sqrt{ f_{\mu_q}} \;(\phi^{\mu_q})^{-1}\left( \sqrt{u_j^{\mu_q} -u_{j-1}^{\mu_q}} \right)\right\|_{\A} \le 4 \ell
\end{equation}
where $\xi$ is a primary $N$-th root of $1$ and $\eta$ is a primary $\tilde{n}$-th root of $1$ in $\CC$,
and
\begin{align}\label{prop-3-general}
~& \left\|\sum_{p=1}^{N} \sum_{i=n + 1}^{m} \xi^{l p} \eta^{k i}
a \sqrt{f_{\mu_p}}\;(\phi^{\mu_p})^{-1}\left(\sqrt{u_i^{\mu_p} -u_{i-1}^{\mu_p}}\right) \right\|_{\A}  \\
 & \le \ell \max_{1\le p\le N}  \biggl(
10\max_{n \le i \le m} \left\|\phi^{\mu_p}( a\sqrt{f_{\mu_p}}) u_i^{\mu_p} - \phi^{\mu_p}(a \sqrt{f_{\mu_p}})\right\|_{C_0(V_{\mu_p},\tilde{A}_{\mu_p})} \;\left\|a\sqrt{f_{\mu_p}} \right\|_{\A}\\
~ & \quad 
 + \frac{1}{2^{2n-1}} \left\|a\sqrt{f_{\mu_p}} \right\|_{\A}^2 \biggr)^{1/2}
\end{align}
where $\xi$ is a primary $N$-th root of $1$ and $\eta$ is a primary $(m-n)$-th root of $1$ in $\CC$.
\end{lemma}
\begin{proof}  For any $ s \in \Omega$,  there are no more than $\ell$ nonzero terms in the equality $\sum_{\mu \in \Lambda}  f_{\mu}(s) =1$. By Theorem \ref{separable-case}, for every $\mu$,
\[
 \left\| \sum_{i=1}^{\tilde{n}} \eta^{k i}
\sqrt{u_i^{\mu} -u_{i-1}^{\mu}} \right\|_{\tilde{A_{\mu}}} \le 4.
\]
Hence we have
\begin{align*}
~ & \left\|\sum_{p=1}^{N} \sum_{i=1}^{\tilde{n}} \xi^{l p} \eta^{k i}
a \sqrt{f_{\mu_p}}\;(\phi^{\mu_p})^{-1}\left(\sqrt{u_i^{\mu_p} -u_{i-1}^{\mu_p}}\right) \right\|_{\A} \\
& = \sup_{s \in \Omega}\left\|\sum_{p=1}^{N}  \xi^{l p} a(s) \sqrt{f_{\mu_p}(s)}\; \sum_{i=1}^{\tilde{n}} \eta^{k i}
(\phi^{\mu_p}_s)^{-1}\left(\sqrt{u_i^{\mu_p} -u_{i-1}^{\mu_p}}\right) \right\|_{A_s} \\
& \le \ell \max_{1\le p\le N}\left\| a \sqrt{f_{\mu_p}}\; \sum_{i=1}^{\tilde{n}} \eta^{k i}
(\phi^{\mu_p})^{-1}\left(\sqrt{u_i^{\mu_p} -u_{i-1}^{\mu_p}}\right) \right\|_{\A} \\
& \le 4 \ell \max_{1\le p\le N}\left\| a \sqrt{f_{\mu_p}}\right\|_{\A}. 
\end{align*}
Thus the inequalities (\ref{prop-1-general}) and (\ref{prop-2-general})
hold.
 
 By Theorem \ref{separable-case}, for every $\mu$,  every $\gamma_i \in \CC$ with $|\gamma_i| = 1$, $ i \in \NN$, and  every $b \in {\tilde{A_{\mu}}}$,
\[
\left\|\sum_{i=n+1}^{m} \gamma_i\; b \sqrt{u_i^{\mu} - u_{i-1}^{\mu}} 
\right\|^2_{\tilde{A_{\mu}}} \le 10\max_{n \le i \le m} \| b u_i^{\mu} - b \|_{\tilde{A}_{\mu}}\| b \|_{\tilde{A}_{\mu}} + \frac{1}{2^{2n-1}} \| b \|_{\tilde{A}_{\mu}}^2.
\]
Therefore we have 
\begin{align*}
~ &\left\|\sum_{p=1}^{N} \sum_{i=n + 1}^{m} \xi^{l p} \eta^{k i}
a \sqrt{f_{\mu_p}}\;(\phi^{\mu_p})^{-1}\left(\sqrt{u_i^{\mu_p} -u_{i-1}^{\mu_p}}\right) \right\|_{\A} \\
& = \sup_{s \in \Omega}\left\|\sum_{p=1}^{N} \sum_{i=n + 1}^{m} \xi^{l p} \eta^{k i}
a(s) \sqrt{f_{\mu_p}(s)}\;(\phi_s^{\mu_p})^{-1}\left(\sqrt{u_i^{\mu_p} -u_{i-1}^{\mu_p}}\right) \right\|_{\A}\\
& \le \ell \max_{1\le p\le N} \left\|\sum_{i=n + 1}^{m} \eta^{k i} a \sqrt{f_{\mu_p}}\;(\phi^{\mu_p})^{-1} \left(\sqrt{u_i^{\mu_p} -u_{i-1}^{\mu_p}}\right) \right\|_{\A}\\
& = \ell \max_{1\le p\le N} \sup_{s \in \Omega}\left\|\sum_{i=n + 1}^{m} \eta^{k i} a(s) \sqrt{f_{\mu_p}(s)}\;(\phi_s^{\mu_p})^{-1} \left(\sqrt{u_i^{\mu_p} -u_{i-1}^{\mu_p}}\right) \right\|_{A_s}\\
~ & \le \ell \max_{1\le p\le N}  \biggl(
10\max_{n \le i \le m} \left\|\phi^{\mu_p}( a\sqrt{f_{\mu_p}}) u_i^{\mu_p} - \phi^{\mu_p}(a \sqrt{f_{\mu_p}})\right\|_{C_0(V_{\mu_p},\tilde{A}_{\mu_p})} \;\left\| a\sqrt{f_{\mu_p}} \right\|_{\A}\\
~ & \quad  + \frac{1}{2^{2n-1}} \left\|a\sqrt{f_{\mu_p}} \right\|_{\A}^2 \biggr)^{1/2}.
\end{align*}
  \end{proof}

For $a \in \A$, $n \in \NN$ and  $\lambda \in N(\Lambda)$, define an element $u_{a,\lambda,n}$ in $\A$
\[
u_{a,\lambda,n} = \sum_{\mu \in \lambda}\sum_{k=1}^{n}
a \sqrt{f_\mu}\;\; (\phi^\mu)^{-1}\left(\sqrt{u_k^\mu -u_{k-1}^\mu} \right) \otimes \sqrt{ f_\mu} \;\;(\phi^\mu)^{-1}\left( \sqrt{u_k^\mu -u_{k-1}^\mu} \right). 
\]
Define $N(\Lambda) \times \NN$ as a directed set with $
(\lambda', n) \preccurlyeq (\lambda'', m) $ if and only if $ \lambda' \subset \lambda''$ and $n  \le m$.

\begin{lemma}\label{u_a_lambda_n-converges} For any $a \in \A$,
the net $u_{a,\lambda,n} $ converges in $\A \widehat{\otimes} \A$.
\end{lemma}
\begin{proof} Note that any compact $K \subset \Omega$ intersects only a finite number of sets in the locally finite covering $\{ V_{\mu} \}$ and, for any $a \in \A$, $\|a(t)\| \to 0$ as $t \to \infty$.
Let $\varepsilon >0$. There is a finite set $\lambda \in N(\Lambda)$
such that for $\mu \not\in \lambda $ we have 
\[\left\| a \sqrt{f_{\mu_p}}\right\|_{\A} < \frac{\varepsilon}{48 \ell^2}. \]

For  $\lambda \preccurlyeq \lambda', \lambda''$  and $m \ge n$, we have
\begin{align*}
 \| u_{a,\lambda'',m} - u_{a,\lambda',n}\|_{\A \widehat{\otimes} \A}&= 
 \| u_{a,\lambda'' \setminus \lambda,m} + u_{a,\lambda,m} - u_{a,\lambda' \setminus \lambda,n} -  u_{a,\lambda,n}\|_{\A \widehat{\otimes} \A}\\
& \le \| u_{a,\lambda'' \setminus \lambda,m}\|_{\A \widehat{\otimes} \A} + \| u_{a,\lambda' \setminus \lambda,n}\|_{\A \widehat{\otimes} \A} + \|u_{a,\lambda,m}-  u_{a,\lambda,n}\|_{\A \widehat{\otimes} \A}.
\end{align*}
By Lemma \ref{diagonal-element}, for $\tilde{\lambda} = \{\mu_1, \dots, \mu_m \}$,
\begin{align*}
\| u_{a,\tilde{\lambda},\tilde{n}} \|&= \left\|\sum_{\mu \in \tilde{\lambda}}\sum_{k=1}^{\tilde{n}}
a \sqrt{f_\mu}\;\; (\phi^\mu)^{-1}\left(\sqrt{u_k^\mu -u_{k-1}^\mu} \right) \otimes \sqrt{ f_\mu} \;\;(\phi^\mu)^{-1}\left( \sqrt{u_k^\mu -u_{k-1}^\mu} \right)\right\|\\
& \le \frac{1}{m \tilde{n}} \sum_{l=1}^{m}\sum_{k=1}^{\tilde{n}} 
\left\|\sum_{p=1}^{m} \sum_{i=1}^{\tilde{n}} \xi^{l p} \eta^{k i}
a \sqrt{f_{\mu_p}}\;(\phi^{\mu_p})^{-1}\left(\sqrt{u_i^{\mu_p} -u_{i-1}^{\mu_p}}\right) \right\| \\
& \;\;\;\;\; \times \left\|\sum_{q=1}^{m} \sum_{j=1}^{\tilde{n}} \xi^{-l q} \eta^{-k j} 
\sqrt{ f_{\mu_q}} \;(\phi^{\mu_q})^{-1}\left( \sqrt{u_j^{\mu_q} -u_{j-1}^{\mu_q}} \right)\right\|
\end{align*}
where $\xi$ is a primary $m$-th root of $1$ and $\eta$ is a primary $\tilde{n}$-th root of $1$ in $\CC$.
By inequalities (\ref{prop-1-general}) and (\ref{prop-2-general}) from Lemma \ref{u_a_lambda_n},
\[
\| u_{a,\lambda'' \setminus \lambda,m}\|_{\A \widehat{\otimes} \A}  \le (4 \ell)^2 \max_{\mu \not\in \lambda}\left\| a \sqrt{f_{\mu}}\right\|_{\A} \le \frac{\varepsilon}{3}
\]
and 
\[ \| u_{a,\lambda' \setminus \lambda,n}\|_{\A \widehat{\otimes} \A} \le (4 \ell)^2 \max_{\mu \not\in \lambda}\left\| a \sqrt{f_{\mu}}\right\|_{\A} \le \frac{\varepsilon}{3}.
\]
By inequality (\ref{prop-3-general}) from Lemma \ref{u_a_lambda_n}, for $\lambda= \{\mu_1, \dots, \mu_N\}$,
\begin{align*}
~& \|u_{a,\lambda,m}-  u_{a,\lambda,n}\|_{\A \widehat{\otimes} \A}\\
~ & \le 4 \ell^2 \max_{1\le p\le N} \biggl( 10  \max_{n \le i \le m} \left\| \phi^{\mu_p}(a \sqrt{f_{\mu_p}}) u_i^{\mu_p} - \phi^{\mu_p} (a \sqrt{f_{\mu_p}}) \right\|_{C_0(V_{\mu_p},\tilde{A}_{\mu_p})} \;\left\| a \sqrt{f_{\mu_p}} \right\|_{\A}\\
~ & \quad + \frac{1}{2^{2n-1}} \left\|a \sqrt{f_{\mu_p}} \right\|_{\A}^2  \biggr)^{1/2}.
\end{align*}
By Lemma \ref{bai-in-C0(Omega,A)}, for every $\mu$,
\[ \left\| \phi^{\mu}(a \sqrt{f_{\mu}}) u_i^{\mu} - \phi^{\mu} (a \sqrt{f_{\mu}}) \right\|_{C_0(V_{\mu},\tilde{A}_{\mu})} \to 0 \;\; \text{as} \;\;
i \to \infty.
\]
Hence 
\[\|u_{a,\lambda,m}-  u_{a,\lambda,n}\|_{\A \widehat{\otimes} \A} \to 0 \;\; \text{as} \;\;
n,m \to \infty.
\]
Thus, in view of the completeness of $\A \widehat{\otimes} \A$,  for any $a \in \A$,
the net $u_{a,\lambda,n} $ converges in $\A \widehat{\otimes} \A$.
  \end{proof}

Now let us complete the proof of Theorem \ref{cont-field-strict-posit}.
Let us define $\rho: \A \rightarrow \A \widehat{\otimes} \A$ by setting, for every $a \in \A$,
\[
\rho(a) = \lim_\lambda \lim_{n \to \infty} \left(\sum_{\mu \in \lambda}\sum_{k=1}^{n}
a \sqrt{f_\mu}\;(\phi^\mu)^{-1}\left(\sqrt{u_k^\mu -u_{k-1}^\mu} \right) \otimes \sqrt{ f_\mu} \;(\phi^\mu)^{-1}\left( \sqrt{u_k^\mu -u_{k-1}^\mu} \right)   \right).
\]
By Lemma \ref{u_a_lambda_n-converges}, for any $a \in \A$,
the net $u_{a,\lambda,n} $ converges in $\A \widehat{\otimes} \A$.
It is  easy to see that  the map $\rho$ is well defined,   $\rho$  is a morphism of left  Banach  $\A$-modules and $\|\rho\|\le (4 \ell)^2$. By Lemma \ref{bai-in-C0(Omega,A)}, for every $a \in \A$,
\begin{align*}
(\pi_{\A} \circ \rho)(a) &=\lim_\lambda \lim_{n \to \infty} \left(\sum_{\mu \in \lambda}\sum_{k=1}^{n}
a f_\mu\;(\phi^\mu)^{-1}\left(u_k^\mu -u_{k-1}^\mu \right) \right)\\
&= \lim_\lambda \sum_{\mu \in \lambda} \lim_{n \to \infty} a f_\mu\;(\phi^\mu)^{-1}\left(u_n^\mu \right)\\
&= \lim_\lambda \sum_{\mu \in \lambda}  a f_\mu = a.
\end{align*}
Thus $\pi_{\A} \circ \rho = {\rm id}_{\A} $.

(ii) By Definition \ref{sigma-triv},
there is an open cover $\{W_\alpha \}$, $\alpha \in \M$, of $\Omega$ such that each $\mathcal{U}|_{W_\alpha}$  is trivial and, in addition,
there is an open cover $\{ B_j\}$ of cardinality $\ell$ of $\Omega$ such that $ \overline{B_j} \subset W_{\alpha(j)}$  for each $j =1, \dots, \ell,$ and some $\alpha(j) \in \M$.
By \cite[Lemma 2.1]{He3}, for any paracompact locally compact space $\Omega$ there exists an open cover $\U=\{U_\nu \}$  of relatively compact sets such that each point in $\Omega$ has a neighbourhood which intersects no more than three sets in $\U$. Consider an open cover
$\{ B_j \cap U_\nu: U_\nu \in \U, j = 1, \dots, \ell \}$ of $\Omega$.
Denote this cover by $\{V_{\mu} \}$. It is easy to see that $\{V_{\mu} \}$ is an open locally finite cover of $\Omega$ of order $3 \ell$. 
The rest of the proof of Part (ii) is similar to Part (i).
  \end{proof}

\begin{theorem}\label{Ax-strictly-positive-h-H2} Let $\Omega$ be a Hausdorff locally compact space with the topological dimension $\dim\Omega \le \ell$, for some $\ell \in \NN$,  let $\mathcal{U}=\{\Omega,A_{x},\Theta \}$ be a locally  trivial continuous field of $C^*$-algebras with strictly positive elements, and let the $C^*$-algebra $\A$ be defined by $\mathcal{U}$. Then the following conditions {\rm (i)} and {\rm (ii)}  are equivalent:\\
{\rm (i)} $\Omega$ is paracompact;\\
{\rm (ii)} $\A$ is left projective and $\mathcal{U}$ is a disjoint union of $\sigma$-locally  trivial continuous fields of $C^*$-algebras with strictly positive elements.\\
Moreover {\rm (i)} or {\rm (ii)} implies \\
{\rm (iii)}
${\H}^2(\A,X) = \{0 \}$ for any right annihilator Banach  $\A$-bimodule $X$.
\end{theorem}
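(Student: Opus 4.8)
The plan is to assemble the structural results already established, since each of the three conditions can be matched to an earlier theorem. The equivalence (i) $\Longleftrightarrow$ (ii) will follow by combining Theorem \ref{cont-field-strict-posit} (which manufactures left projectivity out of paracompactness under the dimension bound $\dim\Omega \le \ell$) with Theorem \ref{Omega-paracompact} (which recovers paracompactness from left projectivity for disjoint unions of $\sigma$-locally trivial fields). The implication to (iii) is then immediate from the homological characterisation \cite[Proposition IV.2.10(I)]{He4}.

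For (i) $\Longrightarrow$ (ii), I would suppose $\Omega$ is paracompact. Since $\mathcal{U}$ is locally trivial, each $A_x$ has a strictly positive element, and $\dim\Omega \le \ell$, condition (i) of Theorem \ref{cont-field-strict-posit} is satisfied, so $\A$ is left projective. To obtain the disjoint-union decomposition I would apply Remark \ref{parac-disjoint-union} with the condition $\Gamma$ taken to be local triviality and with $\Omega_0 = \Omega$: because $\mathcal{U}$ is locally trivial it locally satisfies $\Gamma$, and the remark then exhibits $\Omega$ as a disjoint union of open-closed $\sigma$-compact pieces $G_\mu$, $\mu \in \M$, on which each $\mathcal{U}|_{G_\mu}$ is $\sigma$-locally trivial. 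Since restriction to an open subset leaves the fibre algebras unchanged, every such piece is a $\sigma$-locally trivial continuous field of $C^*$-algebras with strictly positive elements, which is exactly (ii).

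For (ii) $\Longrightarrow$ (i), I would simply invoke Theorem \ref{Omega-paracompact}: by hypothesis $\mathcal{U}$ is a disjoint union of $\sigma$-locally trivial continuous fields and $\A$ is left projective, so that theorem yields the paracompactness of $\Omega$. Finally, for the implication to (iii), note that since (i) $\Longleftrightarrow$ (ii), either hypothesis guarantees that $\A$ is left projective; then \cite[Proposition IV.2.10(I)]{He4}, which asserts that a Banach algebra is left projective if and only if $\H^2(\A,X) = \{0\}$ for every right annihilator Banach $\A$-bimodule $X$, gives (iii).

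The proof is almost entirely a bookkeeping exercise in citing the earlier theorems, so no genuinely new argument is required; the heavy analytic work sits in Theorems \ref{cont-field-strict-posit} and \ref{Omega-paracompact}. The one point I would verify with care is the reading of the condition $\Gamma$ in Remark \ref{parac-disjoint-union}, ensuring that the disjoint-union pieces inherit simultaneously the $\sigma$-local triviality and the presence of strictly positive elements in their fibres, so that (ii) is stated in the correct form.
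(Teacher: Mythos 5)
Your proposal is correct and follows essentially the same route as the paper: Theorem \ref{cont-field-strict-posit} for (i)$\Rightarrow$ left projectivity, Remark \ref{parac-disjoint-union} for the disjoint-union decomposition, Theorem \ref{Omega-paracompact} for (ii)$\Rightarrow$(i), and \cite[Proposition IV.2.10(I)]{He4} for (iii). Your extra check that the disjoint-union pieces retain both $\sigma$-local triviality and strictly positive fibres is a sensible precaution the paper leaves implicit.
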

\begin{proof}
By Theorem \ref{cont-field-strict-posit}, the fact that $\Omega$ is paracompact with the topological dimension $\dim\Omega \le \ell$ implies left projectivity of $\A$. By Remark \ref{parac-disjoint-union}, since
$\Omega$ is paracompact,  $\mathcal{U}$ is a disjoint union of $\sigma$-locally  trivial continuous fields of $C^*$-algebras. By Theorem \ref{Omega-paracompact},
conditions (ii) implies  paracompactness of $\Omega$.
Thus (i) $\Longleftrightarrow$ (ii).

By \cite[Proposition IV.2.10(I)]{He4}, $\A$ is left projective if and only if $ {\H}^2(\A,X) = \{0 \}$ for any right annihilator Banach  $\A$-bimodule $X$ and so (ii)  $\Longrightarrow$ (iii). 
  \end{proof}


DAVID CUSHING,
School of Mathematics and Statistics, Newcastle University,
 NE\textup{1} \textup{7}RU, U.K.~~
e-mail\textup{: \texttt{david.cushing@newcastle.ac.uk}}\\

ZINAIDA A. LYKOVA,
School of Mathematics and Statistics, Newcastle University,
 NE\textup{1} \textup{7}RU, U.K.~~
e-mail\textup{: \texttt{Z.A.Lykova@newcastle.ac.uk}}\\


\begin{thebibliography}{30}

\bibitem{AaKa}  J. F. Aarnes and R. V. Kadison. Pure states and approximate identities. \textit{ Proc. Amer. Math. Soc.} \textbf{ 21}  (1969), 749--752. 

\bibitem{AP}   A. V. Arkhangel'ski\u\i \; and V. I. Ponomarev.
\textit{ Fundamentals of general topology through problems and exercises.}
 Nauka, Moscow, 1974 (in Russian); Mathematics and its Applications. 
D. Reidel Publishing Co., Dordrecht-Boston, Mass., 1984.


\bibitem{Ari00} O. Yu. Aristov. Homological dimensions of $C^*$-algebras. In
\textit{ Topological homology. Helemskii's Moscow seminar.}  Ed. Helemskii, A. Ya., Huntington, NY: Nova Science Publishers, 2000, pp. 39--55.

\bibitem{BDL} W. G. Bade,  H. G. Dales and Z. A. Lykova.
 Algebraic and strong splittings of
extensions of Banach algebras, \textit{ Mem. Amer. Math. 
Soc.} 	\textbf{ 137} (1999), no. 656, 113pp.

\bibitem{CF-T} P. C. Curtis, Jr. and A. Figa-Talamanca.
 Factorization theorems for Banach algebras. In \textit{
 Function Algebras}. Proc. Internat. Sympos. on Function Algebras, 
Tulane University, 1965, Scott-Foresman, Chicago, Ill. 1966, pp. 169--185.


\bibitem{DDPR} H. D. Dales, M. Daws, H. L. Pham and P. Ramsden. Multi-norms and the injectivity of $L^p(G)$.   
arXiv:1101.4320v1 [math.FA] 22 Jan 2011, pp. 29.

\bibitem{DP} H. G. Dales and M. E. Polyakov. Homological properties of modules over group algebras. \textit{Proceedings of the London Mathematical Society} (3), \textbf{89} (2004), 390--426.

\bibitem{DW}    R. S. Doran and J. Wichmann. 
 \textit{ Approximate Identities and Factorization in Banach Modules}. Springer-Verlag, 1979.

\bibitem{En}    R. Engelking. 
 \textit{ General Topology}. Sigma Series in Pure Mathematics, 6,
 Heldermann Verlag, Berlin, 1989.
\bibitem{Di}    J. Dixmier. 
  \textit{ Les $C^*$-alg${\rm \grave{e}}$bres et leurs repr${\rm
\acute{e}}$sentations}. 
Gauthier-Villars, Paris, 1969.


\bibitem{He3}   A. Ya. Helemskii. 
A description of relatively projective ideals in the algebras $C(\Omega)$.
  \textit{ Doklad. Akad. Nauk SSSR} (6) \textbf{195} (1970),
1286-1289  (in Russian); 
\textit{ Soviet Math. Dokl.}  \textbf{1} (1970), 1680--1683.

\bibitem{He4}  A. Ya. Helemskii. \textit{  The Homology of Banach and Topological
Algebras}.  Moscow Univ. Press,  1986 (in Russian); Kluwer Academic
Publishers, 1989 (in English).


\bibitem{Hew} E. Hewitt. The ranges of certain convolution
operators.   \textit{ Math. Scand.} \textbf{15} (1964), 147--155.


\bibitem{Jo1}   B. E. Johnson. 
 Cohomology of Banach algebras, \textit{ Mem. Amer. Math. Soc.}
\textbf{127} (1972).

\bibitem{Jo2}   B. E. Johnson.  The Wedderburn decomposition of Banach algebras with finite-dimensional radical.  \textit{  American J. Math. }
\textbf{90} (1968), 866--876.

\bibitem{MJo}   M. S. G. Jones.  The projectivity of some Banach algebras, \textit{  MPhil Dissertation} (2006), Newcastle University, U.K.


\bibitem{Ke}   J. L. Kelley. \textit{ General topology}. 
(Van Nostrand, Princeton, 1955); with \textit{ Addition}
 by A.V. Arkhangel'skii, Nauka, Moscow, 1981 (in Russian).

\bibitem{KirWas} E. Kirchberg and S. Wassermann. Operations on continuous bundles of $C^*$-algebras. \textit{Math. Ann.} \textbf{303} (1995), 677--697.

\bibitem{Ly2}   Z. A. Lykova. On homological characteristics 
of operator algebras. \textit{ Vest. Mosk. Univ. ser.
mat. mech.} \textbf{1} (1986), 8--13 (in Russian); \textit{Moscow Univ. 
Math. Bull.} (3) \textbf{41} (1986),
 10--15.

\bibitem{Ly02}   Z. A. Lykova. Relations between the homologies of $C^*$-algebras  and their commutative $C^*$-subalgebras. \textit{ Math. Proc. Camb. Phil. Soc.} (2) \textbf{ 132}(2002), 155--168.

\bibitem{PhR}  J. Phillips and I. Raeburn. Central cohomology of $C^*$-algebras. \textit{J London Math. Soc.} (2) \textbf{28} (1983), 363--375.

\bibitem{Ra}  G. Racher. On the projectivity and flatness of some group modules. In \textit{Banach Algebras 2009.}
 Banach Center Publications \textbf{91} (2010), Warszawa, pp. 315--325.

\bibitem{Ri}    C. E. Rickart. 
  \textit{ General theory of Banach algebras}. Van Nostrand, Princeton, 1960.

\bibitem{Se00}   Yu. V. Selivanov. Coretraction problems and homological properties of Banach Algebras. In \textit{ Topological homology. Helemskii's Moscow seminar.} Ed. Helemskii, A. Ya., Huntington, NY: Nova Science Publishers, 2000, pp. 145--199.


\bibitem{Se2} Yu. V. Selivanov. Biprojective Banach
algebras, their structure, cohomology, and relation with nuclear operators.
  \textit{ Funct. anal. i pril.} (1)  \textbf{10} (1976),   89--90 (in Russian); 
\textit{  Functional Anal. Appl.}  \textbf{10} (1976), 78--79.

\bibitem{Se95} Yu. V. Selivanov. Cohomology of biflat  Banach
algebras with coefficients in dual bimodules. 
\textit{  Functional Anal. Appl.}  \textbf{25} (1995), no. 4, 289--291.

\bibitem{Sh}    H. Schaefer. 
  \textit{Topological vector spaces}. Macmillan, New York, 1966.

\bibitem{Wh}  M. C. White. Injective modules for
uniform algebras.
  \textit{Proc. London Math. Soc.} (3) \textbf{73} (1996), 155--184.\\
\end{thebibliography}
\end{document}